\DeclarePairedDelimiter\Lnorm{\|}{\|_{L^1(\R)}}
\DeclarePairedDelimiter\Lnormt{\|}{\|_{L^1([0,T]\times\R)}}
\DeclarePairedDelimiter\Ldnorm{\|}{\|_{L^2(\R)}}
\DeclarePairedDelimiter\Linorm{\|}{\|_{L^\infty(\R)}}
\DeclarePairedDelimiter\Linormt{\|}{\|_{L^\infty([0,T]\times\R)}}
\DeclarePairedDelimiter\Lnormp{\|}{\|_{L^1(\R^+)}}
\DeclarePairedDelimiter\Linormp{\|}{\|_{L^\infty(\R^+)}}
\DeclarePairedDelimiter\Linormtp{\|}{\|_{L^\infty([0,T]\times\R^+)}}
\newtheorem{prop}{Proposition}
\newtheorem{theorem}{Theorem}
\newtheorem{lemma}{Lemma}
\newtheorem{rem}{Remark}[section]
\newtheorem{definition}{Definition}
\def\N {\mathbb{N}}
\def\R {\mathbb{R}}
\newcommand{\dfdt}{\frac{\partial f}{\partial t}}
\newcommand{\fin}{f^{\mathrm{in}}}
\newcommand{\ddt}{\frac{\partial }{\partial t}}
\newcommand{\Ddt}{\frac{\mathrm{d}}{\mathrm{d}t}}
\newcommand{\ie}{\textit{i.e.} }
\newcommand{\tf}{\tilde{f}}
\newcommand{\fp}{f^+}
\newcommand{\fe}{f_\varepsilon}
\newcommand{\fep}{\fe^+}
\newcommand{\fem}{\fe^-}
\newcommand{\indm}{\mathds{1}_{x\leq\varepsilon}}
\newcommand{\indp}{\mathds{1}_{x > \varepsilon}}
\newcommand{\supp}{\mathrm{supp}}
\newcommand{\bb}{\bar\beta(t)}
\newcommand{\dx}{\,\mathrm{d}x}
\newcommand{\dt}{\,\mathrm{d}t}
\newcommand{\ds}{\,\mathrm{d}s}
\begin{document}

\title{Kinetic approach to the collective dynamics of the rock-paper-scissors binary game}

\author{Nastassia Pouradier Duteil\thanks{Sorbonne Universit\'e, Inria, Universit\'e Paris-Diderot SPC, CNRS, Laboratoire Jacques-Louis Lions, Paris, France} 
\ and 
Francesco Salvarani\thanks{\textcolor{black}{L\'eonard de Vinci P\^ole Universitaire, Research Center, 92916 Paris La D\'efense, France} \& Universit\`a degli Studi di Pavia, Dipartimento di
Matematica, I-27100 Pavia, Italy}
}

\date{}
\maketitle

\bibliographystyle{abbrv}

\begin{abstract}
\textcolor{black}{This article studies the kinetic dynamics of the {\it rock-paper-scissors} binary game. We first prove existence and uniqueness of the solution of the kinetic equation and subsequently we prove the rigorous derivation of the quasi-invariant limit for two meaningful choices of the domain of definition of the independent variables. We notice that the domain of definition of the problem plays a crucial role and heavily influences the behavior of the solution.  
The rigorous proof of the relaxation limit does not need the use of entropy estimates for ensuring compactness.}
\end{abstract}

\section{Introduction}

The binary zero-sum game {\it rock-paper-scissors} provides a simple framework for any two-player contest where each player has an equal probability of winning, losing or tying.
\textcolor{black}{It is one of the sub-classes of three-strategy games, and it has been generalised in many ways (for example, punishment games and reward games), both in the static and in the evolutionary contexts. We refer to \cite{game2, game1} and their bibliographies for a more complete description of the subject.} 

The extension of this game to the scale of an entire population has been the subject of several studies in bacterial ecology and evolution 
\cite{2012PhRvE}. In particular, it has been related to the promotion of strain diversity for populations of Escherichia coli \cite{2002Natur}
and other bacteria \cite{2004Natur,Patk2012}, as well as to the stabilization of bacteria populations.
In evolutionary game dynamics, 
it has allowed to describe the cyclic competition of species in ecosystems (for instance in species extinction and coexistence \cite{Shi2010BasinsOA}, or in male reproductive strategies \cite{SL96}). 
These applications use the simple concept of the game to model cyclic competition between species, where the first species dominates the second one, the second dominates the third, and the third dominates the first. Pattern formation can be shown between the densities of populations of each species \cite{2011PhRvE}.

In this article, we take a different approach and we introduce and study the kinetic version of the rock-paper-scissors game, in which instead of inter-species competition, each agent within the unique population can compete with all the other agents.
In the modeling of a kinetic game,
we suppose to have an infinite set of players, which form temporary pairs of players through random encounters. The two agents of a pair play the game only once, and then they search for another competitor, and so on.
We consider the simplest possible situation, namely we suppose that the population is fully interconnected and that there are no forbidden pairwise interactions. Moreover, we assume that the game is only able to modify the value of the individual exchange variable $x$
(which can have any meaningful interpretation, for example it could be intended as the wealth of the individuals, if the agents exchange a given amount of money).
It is clear that the optimal individual strategy consists in randomly playing one of the possible options of the game in a uniformly random order.
\textcolor{black}{Consequently, instead of focusing on the evolution of each player's strategy, as in standard evolutionary games, in this article we consider that each player adopts the same optimal strategy and we study the evolution of the population's wealth distribution.}

From the mathematical point of view, the situation is modeled through a two-level description. We first consider the dynamics of a binary encounter and then we use it for the description of the global population, represented by a density probability whose time evolution obeys to a partial integro-differential equation of kinetic type.
It is worth noticing that the mathematical form of the model heavily depends not only on the precise rules of the game, but also on the domain of the exchange variable. We study here two meaningful possible situations. In the first case $x\in\R$ whereas, in the second case, only interactions that give as possible outcome non-negative values of the exchange variable are allowed (i.e. $x\in\R^+$). If we interpret the exchange variable as the wealth of the individuals, the second case treats situations in which debts are not allowed.
We will see that the resulting mathematical properties of the models are quite different and that the choice of the domain of definition of the exchange variable is a crucial feature of the system.

In particular, when $x\in\R$ the model becomes linear, and it has the form of the semi-discrete implicit Euler scheme of the heat equation. On the other hand, 
if the exchange variable is such that $x\in\R^+$, the model is nonlinear and exhibits concentration effects. 

The main questions studied in the article include the well-posedness of the two problems and their relaxation limits as the payoff $h$ of the game tends to zero in an appropriate time scaling.
This problem is very natural and it has been studied by different authors in several articles (see, for example, \cite{bou-sal2,mcnamara1992inelastic}
and the review article \cite{MR2744702}).
In the first case, when $x\in\R$, the limit of the semi-discrete equation is no other than the classical heat equation in $\R$. 
However, the limit of the second problem, when $x\in\R^+$, involves a diffusion equation with a diffusion rate depending on the mass of the diffusing quantity. \textcolor{black}{By denoting with $u=u(t,x)$ the unknown, this nonlinear and nonlocal equation has, in one space dimension, the form
$$
 \partial_t u(t,x) = \frac{\eta}{3}\left( \int_{\R^+} u(t,x_*) \dx_*\right) \partial_x^2 u(t,x),
$$
and is then supplemented with opportune initial and boundary conditions (see equation \eqref{eq:systlim}).
}
This introduces a nonlinearity as well as a non-local effect. Such kinds of non-local diffusion equations have, to our knowledge, not yet been studied in the literature, \textcolor{black}{which is mainly focused on nonlinearities of porous media type \cite{vazquez2007porous} of non-localities on fractional Laplacian type \cite{bucur2016nonlocal}.}

The two problems being structurally different, it is not surprising that the mathematical arguments in the relaxation procedure are very different. Whereas the linear model is quite straightforward (the relaxation procedure is nothing but the proof of the convergence of the solution of the semi-discrete implicit Euler scheme to the solution of the heat equation), the \textcolor{black}{nonlinear} case is more intricate, \textcolor{black}{because it requires to handle weak $L^1$ compactness in $\R$}.

In the classical framework of kinetic theory a crucial tool for handling weak $L^1$ compactness is given by the entropy estimate, which guarantees the uniform integrability of the density through the De La Vall\'ee-Poussin criterion \cite{golse2005hydrodynamic}. However, because of the positiveness requirement for the post-interaction exchange variables, the existence of a Lyapunov functional for the kinetic model studied in this article is not straightforward when $x\in\R^+$. For this reason, we have solved the relaxation limit by means of a strategy which does not require the existence of a Lyapunov functional for the whole problem. It consists in separating the concentrating part of the solution of the kinetic model from the non-concentrating one and then by treating them in a separate way (see Section \ref{Sec4}).
Up to our knowledge, it is the first rigorous relaxation limit for a \textcolor{black}{nonlinear} kinetic model which does not use entropy estimates for proving compactness.
Moreover, the structure of the diffusion equation satisfied by the limit being interesting by itself, we hope that our work will motivate further studies on the target equation.
 
The structure of the article is as follows. After a general introduction to the model, in Section \ref{Sec3} we treat the linear case, namely when $x\in\R$, whereas Section \ref{Sec4} considers the \textcolor{black}{nonlinear} model for $x\in\R^+$. \textcolor{black}{Lastly}, Section \ref{Sec5} provides an introduction to the study of the nonlinear diffusion equation obtained as the limit of the nonlinear equation studied in Section \ref{Sec4}.
In the whole article, the theoretical analysis is supplemented with the corresponding numerical simulations.

\section{The mathematical description of the problem}

As usual in the kinetic approach, our model is formulated by looking at two different levels. The first one describes the binary interaction dynamics and the second one the time evolution of the population density.

Consider a population which interacts pairwise by playing a game in which each player has three choices: $(0,1,2)$. At each game, the winner's exchange variable increases by $h$ and the loser's decreases by $h$. If there is no winner, both players' exchange variables remain unchanged. The wins and losses are determined \textcolor{black}{in Table \ref{table}}. Each player wins and loses with probability $1/3$.

\begin{table}[h!]
\centering
\begin{tabular}{c|c|c|c|}
& 0 & 1 & 2 \\[4pt]
\hline
\ & \ & \ & \\[-6pt]
$0 \quad$ & $(0,0)$ & $(h,-h)$ & $(-h,h)$ \\[4pt] 
\hline
\ & \ & \ & \\[-6pt]
$1 \quad$  &  $(-h,h)$ & $(0,0)$ & $(h,-h)$ \\[4pt] 
\hline
\ & \ & \ & \\[-6pt]
$2 \quad$  &  $(h,-h)$ & $(-h,h)$ & $(0,0)$ \\[4pt]
\end{tabular}
\caption{\textcolor{black}{Payoff table of the rock-paper-scissors game}}
\label{table}
\end{table}

Let $(x,x_*)\in\R^2$ denote the  values of the exchange variables of two agents after the interaction, and $(x', x'_*)\in\R^2$ their values just before the interaction. If the player with post-interaction exchange variable $x$ wins, $x$ and $x_*$ satisfy: 
\begin{equation}
\begin{cases}
x = x' + h\\
x_*= x'_* - h.
\end{cases}
\end{equation}\label{eq:winloss}
If the players play the same, there is no winner and we have
\begin{equation}
\begin{cases}
x = x'\\
x_* = x'_*.
\end{cases}
\end{equation}\label{eq:draw}

At the collective level, the system of interacting individuals is described by a distribution function $f=f(t,x)$ defined on $[0,T]\times\R$, where
$t\in [0,T]$ is the time variable and $x\in X \subseteq \R$ the exchange variable.
For each subset $D\subseteq X$, the integral
$$
\int_D f(t,x) \dx
$$
represents the number of individuals for which the exchange variable belongs to $D$.

It is clear that the mathematical structure of the model heavily depends on the set $X$. In this article we will focus our attention on two paradigmatic situations: in the first one, we do not impose any constraints on the value of the exchange variables, i.e. $X=\R$, whereas in the second model we suppose that
$X=\R^+$ (if the exchange variable represents the agent's wealth, it means that debts are not allowed).

Throughout the article, $h\in\R^+$ denotes the wager in play and $\eta$ is the probability that two agents interact. 

It is clear that the best strategy, in the sense of game theory, consists in playing the three possible options in a random order, with the same probability. In what follows, we will build the kinetic models corresponding to the two different domains of the exchange variable considered above.

\section{The unconstrained model}
\label{Sec3}

In this section we suppose that $x\in\R$. We link the time derivative of the density function with the possible situations giving, as outcome, the desired result, weighted with the probability that the corresponding situation occurs.
The model takes the following form:
\begin{equation}
\begin{cases}
\displaystyle
\frac{1}{\eta}\dfdt(t,x) = \frac{1}{3}\int_\R f(t,x-h) f(t,x_*) \dx_* + \frac{1}{3}\int_\R f(t,x+h) f(t,x_*) \dx_* -\frac{2}{3}\int_\R f(t,x) f(t,x_*) \dx_* \\[13pt]
f(0,x) = \fin(x)
\end{cases}
\end{equation}
for all $ (t,x)\in\R^+\times\R$.
If we suppose that the density $f$ is non-negative, it is easy to verify that the mass of the population is formally conserved, and we denote it by 
$$
\int_{\R}f(t,x)\dx=\Lnorm{f(t,\cdot)}=\Lnorm{\fin}:=\rho \quad \text{ for all } t\in [0,T].
$$
Similarly, the first moment is conserved, so that we have
$$
\int_{\R}xf(t,x)\dx=\int_\R x\fin(x) \dx  \quad \text{ for all } t\in [0,T].
$$
Hence, the evolution of the density $f$ can be rewritten as: 
\begin{equation}
\label{eq:dyn}
\begin{cases}
\displaystyle
\frac{1}{\eta}\dfdt(t,x) = \frac{1}{3}\rho\left[ f(t,x+h) + f(t,x-h)  -2 f(t,x)\right] \\[13pt]
f(0,\cdot) = \fin(\cdot)
\end{cases}
\end{equation}
for all $ (t,x)\in\R^+\times\R$.

Equation \eqref{eq:dyn} can be seen as a semi-discrete (in space) version of the heat equation.

\subsection{Basic properties}

From the point of view of modeling, it is clear that the natural space of the density $f$ is the space of positive measures. However, from the mathematical point of view, it is convenient to work in a bigger space, namely the space of tempered distributions, which will allows us to use some Fourier transform techniques.
We hence define the solution as follows:

\begin{definition}\label{def:sol_discrete_heat}
Denote with $\langle\langle\, \cdot \, , \, \cdot \, \rangle\rangle$ the duality $(\mathcal{S}'(\R\times\R), \mathcal{S}(\R\times\R))$ and with
  $\langle\, \cdot \, , \, \cdot \, \rangle$ the duality $(\mathcal{S}'(\R), \mathcal{S}(\R))$.
 A solution of \eqref{eq:dyn} in the sense of tempered distributions is an element $f\in\mathcal{S}'(\R\times\R)$ such that
$\supp(f)\subset \R^+\times\R$ and such that, for all $\varphi\in\mathcal{S}(\R\times\R)$, we have
$$
\frac{3}{\rho\eta}\langle\langle f, \varphi_t \rangle\rangle 
 + \frac{3}{\rho\eta} \langle \fin , \varphi(0,\, \cdot\, ) \rangle +
\langle\langle f, \left[\varphi(t,x+h)+\varphi(t,x-h)-2 \varphi(t,x)\right] \rangle\rangle =0.
$$
\end{definition}

The following proposition holds:
\begin{prop}\label{prop:tempered}
Let $\fin\in \mathcal{S}'(\R)$. Then, there exists a unique solution of problem \eqref{eq:dyn} in the sense of tempered distributions. If, moreover, 
$\fin\in L^2(\R)$, then the unique solution \textcolor{black}{satisfies} $f\in C^\infty([0,T],L^2(\R))$. If, in addition, $\fin\geq 0$ for a.e. $x\in \R$, then $f(t,x)\geq 0$ for a.e.
$(t,x)\in\R^+\times \R$.
If, furthermore, $\fin\in L^p(\R)\cap L^2(\R)$, then $f\in C^\infty([0,T],L^p(\R)\cap L^2(\R))$ for all $p\in[1,+\infty]$. In particular, for $p=1$, the mass conservation is guaranteed.
\end{prop}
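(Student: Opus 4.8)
The plan is to diagonalise the right-hand side by the Fourier transform in $x$. Writing $Lg(x)=g(x+h)+g(x-h)-2g(x)$, equation \eqref{eq:dyn} reads $\partial_t f=\frac{\rho\eta}{3}Lf$, and under $\mathcal F_x$ the shift operators become multiplications, so $\widehat{Lg}(\xi)=(2\cos(h\xi)-2)\hat g(\xi)$. Hence the symbol of $\frac{\rho\eta}{3}L$ is
$$
m(\xi)=\frac{2\rho\eta}{3}\big(\cos(h\xi)-1\big)=-\frac{4\rho\eta}{3}\sin^2\!\Big(\frac{h\xi}{2}\Big),
$$
and the formal solution is $\hat f(t,\xi)=e^{tm(\xi)}\hat\fin(\xi)$. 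The decisive structural feature — the one separating this model from the genuine heat equation, whose symbol $-\xi^2$ is unbounded — is that $m$ is a bounded, smooth, even function with $-\tfrac{4\rho\eta}{3}\le m(\xi)\le 0$. Consequently $e^{tm(\cdot)}$ and all its $\xi$-derivatives are bounded on $\R$ for every $t\ge0$, i.e. $e^{tm}$ lies in $\mathcal O_M(\R)$, the space of multipliers of $\mathcal S'(\R)$.

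For existence in $\mathcal S'$ I would define $f$ by $\hat f(t,\cdot)=e^{tm}\hat\fin$ for $t\ge0$, extended by $0$ for $t<0$; since $e^{tm}\in\mathcal O_M$ depends smoothly and temperately on $t$, this produces a genuine element of $\mathcal S'(\R\times\R)$ with $\supp f\subset\R^+\times\R$, and integrating by parts in $t$ against a test function reproduces exactly the boundary term $\langle \fin,\varphi(0,\cdot)\rangle$ of Definition \ref{def:sol_discrete_heat}. For uniqueness, the difference $g$ of two solutions satisfies the weak formulation with $\fin=0$ and $\supp g\subset\{t\ge0\}$; applying $\mathcal F_x$ turns this into the distributional ODE $\partial_t\hat g=\frac{\rho\eta}{3}m(\xi)\hat g$ in the $t$ variable, with $\xi$ a parameter. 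Convolving in $t$ with the forward fundamental solution $H(t)e^{\frac{\rho\eta}{3}m(\xi)t}$ — which is bounded since $m\le0$, and future-supported like $\hat g$ — yields $\hat g=0$, hence $g=0$. This support-and-fundamental-solution step is the one genuinely delicate point; everything else is transparent once the boundedness of $m$ is in hand.

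The $L^2$ theory is then immediate. If $\fin\in L^2(\R)$ then $\hat\fin\in L^2$ by Plancherel, and since $\partial_t^k\hat f=m^k e^{tm}\hat\fin$ with $|m^k e^{tm}|\le(4\rho\eta/3)^k$ uniformly in $\xi$ and $t\ge0$, each $\partial_t^k\hat f(t,\cdot)$ lies in $L^2$ with $\|\partial_t^k f(t,\cdot)\|_{L^2}\le(4\rho\eta/3)^k\|\fin\|_{L^2}$; dominated convergence in $\xi$ gives continuity and then smoothness of $t\mapsto f(t,\cdot)$, so $f\in C^\infty([0,T],L^2(\R))$.

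Positivity, the $L^p$ bounds and mass conservation I would read off from an explicit kernel. Setting $z=\frac{2\rho\eta t}{3}$ and expanding $e^{tm(\xi)}=e^{-z}e^{z\cos(h\xi)}$ by the generating function $e^{z\cos\theta}=\sum_{n\in\Z}I_n(z)e^{in\theta}$ of the modified Bessel functions gives
$$
f(t,x)=\sum_{n\in\Z}p_n(t)\,\fin(x+nh),\qquad p_n(t)=e^{-z}I_n(z).
$$
Since $I_n(z)\ge0$ for $z\ge0$, nonnegativity of $\fin$ propagates to $f(t,\cdot)\ge0$. The weights satisfy $p_n(t)\ge0$ and $\sum_{n}p_n(t)=e^{-z}\sum_n I_n(z)=1$, so $\{p_n(t)\}_n$ is a probability law; the triangle inequality then gives $\|f(t,\cdot)\|_{L^p(\R)}\le\sum_n p_n(t)\|\fin\|_{L^p(\R)}=\|\fin\|_{L^p(\R)}$ for every $p\in[1,\infty]$, while for $\fin\ge0$ Tonelli's theorem yields $\int_\R f(t,x)\dx=\sum_n p_n(t)\int_\R\fin\dx=\rho$, i.e. exact mass conservation. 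Smoothness in $C^\infty([0,T],L^p(\R)\cap L^2(\R))$ follows because $\sum_n|p_n^{(k)}(t)|<\infty$ (the $I_n(z)$ decay super-exponentially in $n$), so each time-derivative is again a uniformly bounded superposition of shifts.
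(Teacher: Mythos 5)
Your proposal is correct and follows the same overall strategy as the paper: diagonalize by the partial Fourier transform, observe that the symbol $m(\xi)=\frac{2\rho\eta}{3}(\cos(h\xi)-1)$ is a bounded non-positive multiplier, and then read positivity and the $L^p$ estimates off an explicit kernel that is a non-negative combination of translated Dirac masses. Two of your steps differ in execution and are worth noting. For uniqueness, the paper argues by duality: it solves the backward adjoint problem $\hat\varphi_t+m\hat\varphi=\hat\psi$, $\hat\varphi(T,\cdot)=0$ explicitly for arbitrary Schwartz data $\hat\psi$ and tests the difference of two solutions against it, whereas you convolve the future-supported difference with the forward fundamental solution $H(t)e^{tm(\xi)}$ of the ODE in $t$; both hinge on the boundedness of $e^{tm}$ and are equally valid (you rightly flag the support/convolution step as the delicate one — the paper's version has an exactly analogous delicate step in checking that the constructed $\hat\varphi$ is an admissible test function). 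For the kernel, the paper expands $e^{z\cos(h\xi)}$ as a double power series and obtains $F(t,\cdot)$ as a sum of weighted Dirac masses, then deduces the $L^p$ statements from ``regularity inherited by convolution'' without explicitly verifying that $F$ has total mass one; your Bessel-function packaging $p_n(t)=e^{-z}I_n(z)$ with $\sum_n p_n(t)=1$ is the same kernel resummed by translation length, and it buys you a cleaner, fully explicit proof of the $L^p$ contraction, of mass conservation, and of $C^\infty$ regularity in time. One small slip to fix: having defined $m$ as the symbol of $\frac{\rho\eta}{3}L$, the transformed equation for the difference should read $\partial_t\hat g=m(\xi)\hat g$ and the fundamental solution $H(t)e^{m(\xi)t}$; the extra factor $\frac{\rho\eta}{3}$ in those two displays double-counts the constant already absorbed into $m$.
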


\begin{proof}
The Fourier transform being an isomorphism of $\mathcal{S}'$ onto itself, we can work with the Fourier transform of the problem.
Let $\hat f$ be the partial Fourier transform of $f$ with respect to the $x$ variable, and let $\xi$ be the associated Fourier variable. 
Hence, the tempered distribution $\hat f$ satisfies, for all $\varphi\in\mathcal{S}(\R\times\R)$,
$$
\langle\langle f, \hat\varphi_t \rangle\rangle 
 +  \langle \fin , \hat\varphi(0,\, \cdot\, ) \rangle + \frac{2\rho\eta}{3}
\left\langle\left\langle f,  \left[\cos\left({h\, \xi} \right )-1\right] \hat \varphi \right\rangle\right\rangle =0,
$$
which is the distributional formulation of the problem
\begin{equation}
\label{eq:dyn_Fourier}
\begin{cases}
\displaystyle
\partial_t  \hat f =  \frac{2\rho\eta}{3}\left [\cos\left({h\,\xi}\right )-1\right] \hat f
\\[13pt]
\hat f(0,\cdot) = \hat\fin(\cdot).
\end{cases}
\end{equation}

In order to prove uniqueness, let us suppose, by contradiction, that there are two solutions $f_1$ and $f_2$ to \eqref{eq:dyn_Fourier}. Hence, the difference
$u:=f_1-f_2$ satisfies, for all $\varphi\in\mathcal{S}(\R\times\R)$
$$
\left\langle\!\!\left\langle u,  
\hat\varphi_t+ \frac{2\rho\eta}{3}
\left[\cos\left({h\, \xi}\right )-1\right] \hat \varphi \right\rangle\!\!\right\rangle =0.
$$
Since $\supp(u)\subset \R^+\times\R$, then the distribution $u \left [\cos\left({h\,\xi}\right )-1\right]$ has support in $ \R^+\times\R$, and hence we can limit
ourselves to test-functions $ \varphi\in\mathcal{S}(\R\times\R)$ \textcolor{black}{such that} $\supp(\varphi)\subset \R^+\times\R$.

Consider now a function $\varphi\in\mathcal{S}(\R\times\R)$ with $\supp(\varphi)\subset (-T,T)\times\R$. The Schwartz class
$\mathcal{S}(\R\times\R)$ being invariant under the Fourier transform, the uniqueness of the
solution of \eqref{eq:dyn_Fourier} is guaranteed after proving that, for all $\psi\in\mathcal{S}(\R\times\R)$ with $\supp(\psi)\subset (-T,T)\times\R$, there exists a function $\hat \varphi\in\mathcal{S}(\R\times\R)$ such
that the problem
$$
\hat \varphi_t+ \frac{2\rho\eta}{3} \left[\cos\left({h\, \xi}\right )-1\right] \hat \varphi =\hat\psi \qquad  \hat\varphi(t,x)\vert_{t=T}=0
$$
has a solution. The structure of the problem allows us to obtain the explicit form of $\hat \varphi$, namely
$$
\hat \varphi(t,\xi)= \int_0^T \hat\psi(t',\xi )\exp\left\{\frac{2\rho\eta}{3} \left[\cos\left({h\, \xi}\right )-1\right] (t'-t)\right\}\dt',
$$
which shows that the requirements about the support and the regularity of $\hat \varphi$ are fulfilled. Hence,
from $\langle\langle u, \hat\psi\rangle\rangle=0$ for all $\psi\in\mathcal{S}(\R\times\R)$ with $\supp(\psi)\subset (-T,T)\times\R$, we deduce that
$\hat u=0$ in $\mathcal{S}'(\R\times\R)$ and consequently that $u=0$ in $\mathcal{S}'(\R\times\R)$. The uniqueness is hence proved.

By linearity, it is easy to find the explicit formulation of the (unique) solution of \eqref{eq:dyn_Fourier} in $\mathcal{S}'$:
$$
\langle\langle \hat f, \varphi \rangle\rangle =  \left\langle \hat\fin(\cdot) , \varphi(t,\cdot) \exp \left [\frac{2\rho\eta t}{3} \left(\cos\left({h\,\cdot} \right )-1\right)\right]
\right\rangle.
$$
The Fourier transform being an isomorphism of $\mathcal{S}'$ onto itself, consequently, there exists a unique solution of \eqref{eq:dyn} in $\mathcal{S}'$. 
If, moreover, $\fin\in L^2(\R)$, we can deduce from the explicit expression of the solution in the Fourier representation, that there exists a unique solution $f\in C([0,T],L^2(\R))$ by Plancherel's theorem, and \eqref{eq:dyn_Fourier} can be intended in the standard $L^2$-sense. Moreover, by a bootstrap argument, one easily proves that $f\in C^\infty([0,T],L^2(\R))$.

For any $g\in L^\infty(\R^+\times\R)$, denote its partial inverse Fourier transform by
$$
\mathcal{F}^{-1}(g)(t,x)= \frac 1{2\pi}\int_\R g(t,\xi) e^{i x \xi}\, \mathrm{d} \xi
$$
and consider
$$
F(t,x):= \mathcal{F}^{-1}
\left(  \exp\left[
\frac{2\rho\eta}{3} \left(\cos({h\, \xi} )-1\right)t \right]
\right).
$$

Hence
$$
F(t,x)=  \frac 1{2\pi} \exp \left[-\frac{2\rho\eta t}{3}  \right] \int_\R \exp\left[\frac{2\rho\eta}{3} \cos\left({h\, \xi}\right )t \right]  e^{i x \xi}\, \mathrm{d} \xi
$$
$$
= \frac 1{2\pi} \exp \left[-\frac{2\rho\eta t}{3}  \right] \int_\R \sum_{k=0}^\infty \frac 1{k!}\left[\frac{2\rho\eta}{3} \cos\left({h\, \xi}\right )t \right]^k  e^{i x \xi}\, \mathrm{d} \xi.
$$
By using the identity
$$
\cos (\alpha)= \frac{e^{i\alpha}+e^{-i\alpha}}2 \qquad \text{\textcolor{black}{for all} }\alpha\in\R,
$$
we can conclude that
\begin{equation}\label{eq:F}
\forall (t,x)\in [0,T]\times \R, \quad F(t,x) = e^{-2\rho\eta/{3}}\sum_{k=0}^{+\infty} \frac{({\eta}\rho t)^k}{3^k k!} \sum_{i=0}^k \dbinom{k}{i} \delta_0(x+(k-2i)h).
\end{equation}

Going back to \eqref{eq:dyn_Fourier}, we can finally deduce that
\begin{equation}\label{eq:Fconv}
f(t,x) = (F*\fin)(t,x) = \int_\R F(t,x-y)\fin(y)dy.
\end{equation}
The previous equation, together with the explicit form of $F$, shows that, if the initial condition $\fin$ is non-negative, then the solution is non-negative as well for all times.

Because of the regularity properties inherited by the convolution, Equation \eqref{eq:Fconv} guarantees that the solution has -- at least -- the regularity of the initial condition.
\end{proof}

Define the energy of the system $E:t\mapsto \Ldnorm{f(t,\cdot)}/2$.
The following theorem implies that -- under suitable hypotheses on the initial condition -- the energy decreases with respect to time like $t\mapsto t^{-\gamma}$ for all $\gamma<1/4$.

\begin{theorem}
Suppose that $\fin\in H^{1/2+\varepsilon}(\R)$ for some $\varepsilon>0$. Then for all $\gamma<{1}/{4}$, there exists an explicit constant $C_\gamma$ such that
\begin{equation}\label{eq:energy}
E(t)\leq \frac{C_\gamma}{t^\gamma}.
\end{equation}
\end{theorem}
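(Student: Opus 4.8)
The starting point is the explicit Fourier representation obtained in the proof of Proposition~\ref{prop:tempered}: for $\fin\in L^2(\R)$ the solution satisfies $\hat f(t,\xi)=\hat\fin(\xi)\,\exp\!\big[\tfrac{2\rho\eta}{3}(\cos(h\xi)-1)t\big]$. By Plancherel's theorem $E(t)$ equals a fixed constant times $\int_\R|\hat f(t,\xi)|^2\,\mathrm{d}\xi$, so, writing $c:=\tfrac{4\rho\eta}{3}$ and $m_t(\xi):=e^{\,c\,t\,(\cos(h\xi)-1)}$, the estimate becomes a weighted frequency integral,
\[
E(t)=\frac{1}{4\pi}\int_\R |\hat\fin(\xi)|^2\,m_t(\xi)\,\mathrm{d}\xi .
\]
The plan is to extract decay from $m_t$. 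The crucial structural feature — and the source of all the difficulty — is that $m_t$ is periodic of period $2\pi/h$: it equals $1$ not only at $\xi=0$ but on the whole lattice $\Lambda:=\tfrac{2\pi}{h}\Z$, and near each node $\xi_k:=2\pi k/h$ one has $\cos(h\xi)-1\simeq -\tfrac{h^2}{2}(\xi-\xi_k)^2$, so $m_t$ is a Gaussian-type bump of height $1$ and width $\sim t^{-1/2}$. Thus, unlike the genuine heat equation, $m_t$ does not decay at high frequencies, and the decay of $E$ must be harvested simultaneously from the local $t^{-1/2}$ concentration of each bump and from the decay of $\hat\fin$ across the infinitely many recurrences on $\Lambda$.

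Accordingly, I would decompose $\R=\bigcup_k I_k$ into the period cells $I_k:=\xi_k+[-\pi/h,\pi/h]$ and estimate $\int_{I_k}|\hat\fin|^2 m_t\,\mathrm{d}\xi$ cellwise by H\"older's inequality with a pair of exponents $(r,r')$. By periodicity and Laplace's method, $\int_{I_k} m_t^{\,r'}\,\mathrm{d}\xi$ is independent of $k$ and behaves like a constant times $t^{-1/2}$, furnishing the uniform factor $t^{-1/(2r')}$. The complementary factor $\|\hat\fin\|_{L^{2r}(I_k)}^2$ I would control by interpolating $L^{2r}$ between $L^2$ and $L^\infty$ on the cell, using the finite mass of the density, so that $\hat\fin\in L^\infty(\R)$ with $\|\hat\fin\|_{L^\infty}\le\rho$; this gives $\|\hat\fin\|_{L^{2r}(I_k)}^2\lesssim \rho^{2(1-1/r)}\big(\int_{I_k}|\hat\fin|^2\,\mathrm{d}\xi\big)^{1/r}$. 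This is where the hypothesis is used: boundedness of $\hat\fin$ tames the cell $I_0$ around the origin (where $m_t$ equals $1$ at the centre and no Sobolev decay is available), while the fractional regularity controls the lattice tail.

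It then remains to sum over $k$. Bounding $\int_{I_k}|\hat\fin|^2\,\mathrm{d}\xi\le (1+\xi_k^2)^{-s}\,\mu_k$ with $\mu_k:=\int_{I_k}|\hat\fin|^2(1+\xi^2)^{s}\,\mathrm{d}\xi$ and $s=\tfrac12+\varepsilon$, and applying the discrete H\"older inequality with the same exponents, I obtain
\[
\sum_k\Big(\int_{I_k}|\hat\fin|^2\,\mathrm{d}\xi\Big)^{1/r}\le\Big(\sum_k\mu_k\Big)^{1/r}\Big(\sum_k (1+\xi_k^2)^{-\,s r'/r}\Big)^{1/r'},
\]
where $\sum_k\mu_k=\|\fin\|_{H^{s}(\R)}^2$. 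The lattice series converges as soon as $2s\,r'/r>1$, i.e. $r<1+2s=2+2\varepsilon$, and collecting factors yields $E(t)\le C_\gamma\,t^{-\gamma}$ with $\gamma=\tfrac12-\tfrac1{2r}$; letting $r\uparrow 2+2\varepsilon$ drives $\gamma$ up to $\tfrac12-\tfrac1{4(1+\varepsilon)}$, so every $\gamma<1/4$ is admissible, with $C_\gamma$ explicit in $\rho,\eta,h,\varepsilon$ and $\|\fin\|_{H^{s}}$ and blowing up as $\gamma\uparrow1/4$.

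The hard part is precisely the one flagged above: because the multiplier recurs to $1$ on the entire lattice $\Lambda$ rather than only at the origin, the crude $L^\infty$ bound on $m_t$ gives no decay, and one is forced to balance the local $t^{-1/2}$ concentration of each bump against a convergent sum over the nodes. It is exactly the Sobolev threshold $s>1/2$ that renders this lattice sum summable while leaving $\gamma$ free below $1/4$; any less regularity would leave $E$ decaying only logarithmically.
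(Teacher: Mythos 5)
Your argument is correct and shares the paper's overall skeleton --- Plancherel, decomposition of frequency space into the period cells of the multiplier, H\"older on each cell, and summation over cells using the $H^{1/2+\varepsilon}$ regularity --- but the two key technical steps are implemented differently. To extract the time decay, the paper does not use Laplace's method on $\int_{I_k}m_t^{r'}\,\mathrm{d}\xi$: it first applies the pointwise bound $z^\gamma e^{-z}\leq(\gamma/e)^\gamma$ with $z=\tfrac{8\rho\eta}{3}\sin^2(h\xi/2)\,t$, converting the exponential into the singular algebraic weight $t^{-\gamma}\,\vert\sin(h\xi/2)\vert^{-2\gamma}$, and then applies Cauchy--Schwarz (your case $r=r'=2$) on each cell; the integrability of $\vert\sin(h\xi/2)\vert^{-4\gamma}$ over one period is precisely what produces the threshold $\gamma<1/4$. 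For the summation over cells, the paper bounds $\vert\hat{\fin}\vert^4(1+\vert\xi\vert^2)^{1+2\varepsilon}$ pointwise and sums $\sum_k(1+\vert 2k\pi/h\vert^2)^{-(1+2\varepsilon)/2}$, whereas you run a discrete H\"older inequality directly against $\sum_k\mu_k=\Vert\fin\Vert_{H^s}^2$; your route is the cleaner one here, since $\fin\in H^{1/2+\varepsilon}$ yields a weighted $L^2$ bound on $\hat{\fin}$ rather than the weighted $L^\infty$ bound the paper momentarily invokes (the paper's conclusion still holds because, exactly as in your argument, the bound $\Vert\hat{\fin}\Vert_{L^\infty}\leq\rho$ coming from $\fin\in L^1$ closes the estimate). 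What your free H\"older exponent buys is a marginally stronger statement: letting $r\uparrow 2+2\varepsilon$ pushes the decay rate up to $\gamma<\tfrac{1+2\varepsilon}{4(1+\varepsilon)}>\tfrac14$, while the paper's fixed choice $r=2$ caps it at $1/4$. Note that both proofs use the standing assumption $\fin\in L^1(\R)$ (needed for $\hat{\fin}\in L^\infty$), which is implicit in the model but not restated in the theorem.
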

\begin{proof}
The energy of the system is defined as: 
\begin{equation*}
\begin{split}
E(t) & = \frac{1}{2} \int_\R |f (t,x)|^2 dx = \frac{1}{2} \int_\R \left\vert\hat{f}(t,\xi)\right\vert^2 d\xi =
\frac{1}{2}\int_\R\left\vert\hat{\fin}(\xi)\right\vert^2 \exp \left(-{\frac{8\rho\eta}{3}\sin^2\left(\frac{h\xi}{2}\right)t}\right) d\xi \\ 
& = \frac{1}{2}\int_\R 
\left\vert\hat{\fin}(\xi)\right\vert^2\,
\left\vert \frac{8\rho\eta}{3}\sin^2\left(\frac{h\xi}{2}\right)t \right\vert^{-\gamma} 
\left|\frac{8\rho\eta}{3}\sin^2\left(\frac{h\xi}{2}\right)t\right|^\gamma \exp\left (-{\frac{8\rho\eta}{3}\sin^2\left(\frac{h\xi}{2}\right)t}\right) d\xi. 
\end{split}
\end{equation*}
Notice that for all $\gamma>0$, for all $z\geq 0$, $z^\gamma e^{-z}\leq \left({\gamma}/{e}\right)^\gamma$. Then, provided that 
$$
\xi\mapsto \left\vert\hat{\fin}(\xi)\right\vert^2 \left\vert\frac{8\rho\eta}{3}\sin^2\left(\frac{h\xi}{2}\right )t \right\vert^{-\gamma}
$$ 
is integrable on $\R\setminus \{{2k\pi}/{h}, k\in\mathbb{Z}\}$,
$$
E(t) \leq \frac{1}{2} \left(\frac{3 \gamma}{e}\right)^\gamma \sum_{k\in\mathbb{Z}}\int_{{2\pi k}/{h}}^{{2\pi(k+1)}/{h}}  \frac{\left\vert\hat{\fin}(\xi)\right\vert^2}{\left\vert{8\rho\eta}\sin^2({h\xi}/{2})t\right\vert^{\gamma}} d\xi.
$$
Since $\fin\in L^1(\R)$, $\hat\fin\in L^\infty(\R)$ so if $\gamma<{1}/{2}$, the \textcolor{black}{quantity}
$$
\int_{{2\pi k}/{h}}^{{2\pi(k+1)}/{h}} \frac{\left\vert\hat{\fin}(\xi)\right\vert^2}{\left\vert{8\rho\eta}\sin^2({h\xi}/{2})t\right\vert^{\gamma}} d\xi
$$ 
is well defined for all $k\in\mathbb{Z}$.
As we have supposed $\gamma<{1}/{4}$, we can even write, for each term of the series: 
$$
\int_{{2\pi k}/{h}}^{{2\pi(k+1)}/{h}}
 \frac{\left\vert\hat{\fin}(\xi)\right\vert^2}{\left\vert{8\rho\eta}\sin^2({h\xi}/{2})t\right\vert^{\gamma}} d\xi
\leq 
\left(\int_{{2\pi k}/{h}}^{{2\pi(k+1)}/{h}} \left\vert\hat{\fin}(\xi)\right\vert^4 d\xi\right)^{1/2}
\left( \int_{{2\pi k}/{h}}^{{2\pi(k+1)}/{h}} \frac{1}{\left\vert{8\rho\eta}\sin^2({h\xi}/{2})t\right\vert^{2\gamma}}  d\xi\right)^{1/2}.
$$
The quantity 
$$
c_\gamma:=\left( \int_{{2\pi k}/{h}}^{{2\pi(k+1)}/{h}} \frac{1}{\left\vert{8\rho\eta}\sin^2({h\xi}/{2})t\right\vert^{2\gamma}}  d\xi\right)^{1/2}
$$
is independent of $k\in \mathbb{Z}$, so 
$$
E(t) \leq \frac{1}{2} \frac{c_\gamma}{t^\gamma} \left(\frac{3\gamma}{e}\right)^\gamma \sum_{k\in\mathbb{Z}}
\left( \int_{{2\pi k}/{h}}^{{2\pi(k+1)}/{h}} 
\left\vert\hat{\fin}(\xi)\right\vert^4 d\xi\right)^{1/2},
$$
provided that this series converges. We prove that this is the case if $\fin\in H^{1/2+\varepsilon}$ for $\varepsilon>0$. Indeed, we then have $\hat\fin (1+|\xi |^2)^{1/4+\varepsilon/2}\in L^\infty(\R)$. 
Since $\hat\fin\in L^\infty(\R)$, we also have $|\hat\fin|^4 (1+|\xi|^2)^{1+2\varepsilon}\in L^\infty(\R)$. Then we have 
\begin{equation*}
\begin{split}
E(t) & \leq \frac{1}{2} \frac{c_\gamma}{t^\gamma} \left(\frac{3\gamma}{e}\right)^\gamma \sum_{k\in\mathbb{Z}}
\left(\int_{{2\pi k}/{h}}^{{2\pi(k+1)}/{h}} \frac{|\hat{\fin}(\xi)|^4 (1+|\xi|^2)^{1+2\varepsilon}}{(1+|\xi|^2)^{1+2\varepsilon}} d\xi\right)^{1/2} \\
& \leq \frac{1}{2} \frac{c_\gamma'}{t^\gamma} \left(\frac{3\gamma}{e}\right)^\gamma \sum_{k\in\mathbb{Z}}
\left(\int_{{2\pi k}/{h}}^{{2\pi(k+1)}/{h}} \frac{1}{(1+|\xi|^2)^{1+2\varepsilon}} d\xi\right)^{1/2} 
 \leq \frac{1}{2} \frac{c_\gamma'}{t^\gamma} \left(\frac{3\gamma}{e}\right)^\gamma\sqrt{\frac{2\pi}{h}} \sum_{k\in\mathbb{Z}}
\left (1+\left\vert\frac{2k\pi}{h}\right\vert^2\right )^{-\frac{1+2\varepsilon}{2}}.
\end{split}
\end{equation*}
The series on the right-hand side of the previous inequality converges, so gathering all constants under the name $C_\gamma$ we obtain
the desired inequality \eqref{eq:energy}.
\end{proof}

\subsection{The quasi-invariant limit}

As a warm-up for the relaxation limit in the case of the constrained model, whose computations will be detailed in the next section, we study the limit of the rescaled system

\begin{equation}\label{eq:dynrescaled}
\begin{cases}
\displaystyle\partial_t f_\varepsilon(t,x) = \frac{\eta}{3\varepsilon^2}\Lnorm{\fin}  \left(f_\varepsilon(t,x+\varepsilon)+f_\varepsilon(t,x-\varepsilon)-
2f_\varepsilon(t,x)\right) \\[13pt]
f(0,x) = \fin(x)
\end{cases}
\end{equation}
and show that the solution to \eqref{eq:dynrescaled} tends to the solution to the heat equation when the parameter $\varepsilon$ tends to $0$.
It is an instructive computation, because it shows that linearity, together with an $L^\infty$-bound, make quite straightforward the study of the limit in the
diffusive scaling. As we will show in the next section, the lack of these properties requires a rather different approach.

We first prove the following technical lemma, which allows us to \textcolor{black}{deduce a uniform $L^\infty$ bound and hence to work in the $L^\infty$
setting}.

\begin{lemma}\label{lemma:Linf}
Let $\fin\in  L^1(\R) \cap L^\infty(\R)$ and let $f_\varepsilon\in C^\infty([0,T];L^1(\R)\cap L^2(\R))$ denote the weak solution to the rescaled dynamics \eqref{eq:dynrescaled}. 
Then $f_\varepsilon\in L^\infty([0,T] \times\R)$ and 
$$
\Linormt{f_\varepsilon} \leq \Linorm{\fin}.
$$
\end{lemma}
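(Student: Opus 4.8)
The plan is to avoid any direct manipulation of the semi-discrete operator and instead to read the bound off the explicit solution formula already established in Proposition~\ref{prop:tempered}. Indeed, the rescaled dynamics \eqref{eq:dynrescaled} is exactly the dynamics \eqref{eq:dyn} with the wager $h$ replaced by $\varepsilon$ and the coefficient $\eta\rho/3$ replaced by $\eta\rho/(3\varepsilon^2)$, where $\rho=\Lnorm{\fin}$. Hence the representation \eqref{eq:Fconv} carries over verbatim: writing $F_\varepsilon(t,\cdot)$ for the fundamental kernel obtained from \eqref{eq:F} after these two substitutions, the unique solution is the convolution $\fe(t,\cdot)=F_\varepsilon(t,\cdot)*\fin$. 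The whole lemma then reduces to two elementary properties of $F_\varepsilon(t,\cdot)$, which I would verify first: that it is a \emph{non-negative} measure, and that its total mass equals $1$ for every $t\in[0,T]$.

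Both properties are immediate from the structure of \eqref{eq:F}. After the substitutions, $F_\varepsilon(t,\cdot)$ is a countable superposition of translated Dirac masses $\delta_0\bigl(\cdot+(k-2i)\varepsilon\bigr)$ weighted by the coefficients $e^{-2\rho\eta t/(3\varepsilon^2)}(\eta\rho t)^k\binom{k}{i}/((3\varepsilon^2)^k\,k!)$, which are manifestly non-negative; thus $F_\varepsilon(t,\cdot)\ge 0$. For the mass, it suffices to evaluate its Fourier transform $\exp\!\bigl[\tfrac{2\rho\eta t}{3\varepsilon^2}(\cos(\varepsilon\xi)-1)\bigr]$ at $\xi=0$, which gives $1$; equivalently, summing the coefficients over $i$ produces a factor $2^k$, and the resulting series in $k$ exactly cancels the exponential prefactor. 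With these two facts in hand, the conclusion is a one-line application of Young's inequality: for a.e. $(t,x)$,
\begin{equation*}
|\fe(t,x)| = \left| \int_\R F_\varepsilon(t,x-y)\,\fin(y)\,\mathrm{d}y \right| \le \Linorm{\fin}\int_\R F_\varepsilon(t,x-y)\,\mathrm{d}y = \Linorm{\fin},
\end{equation*}
the last equality being the unit-mass property (the integrals are understood against the positive measure $F_\varepsilon$). Taking the supremum over $(t,x)\in[0,T]\times\R$ yields $\Linormt{\fe}\le\Linorm{\fin}$, and the bound is manifestly \emph{uniform in $\varepsilon$}, which is the point of the lemma.

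The main (and essentially only) subtlety I want to flag is why I do \emph{not} simply invoke a maximum principle. The tempting shortcut is to set $g:=\Linorm{\fin}-\fe$, observe that $g$ solves the same linear equation \eqref{eq:dynrescaled} with non-negative initial datum, and conclude $g\ge0$ from the positivity statement of Proposition~\ref{prop:tempered}. This is illegitimate here, because the constant $\Linorm{\fin}$ is not in $L^2(\R)$ (nor in $L^1(\R)$), so $g$ does not lie in the function space for which positivity of the flow was proved, and comparison with constant steady states is unavailable. Routing the argument through the kernel avoids this entirely, since it uses only $\fin\in L^\infty(\R)$ together with the positivity and unit mass of $F_\varepsilon(t,\cdot)$. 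For completeness I note a space-intrinsic alternative: testing \eqref{eq:dynrescaled} against $|\fe|^{p-2}\fe$ and using shift-invariance of Lebesgue measure together with H\"older's inequality (with $\||\fe|^{p-2}\fe\|_{L^{p/(p-1)}(\R)}=\|\fe\|_{L^p(\R)}^{p-1}$ and $\|\fe(\cdot\pm\varepsilon)\|_{L^p(\R)}=\|\fe\|_{L^p(\R)}$) shows that each cross term is bounded by $\|\fe\|_{L^p(\R)}^p$, whence $\tfrac{\mathrm d}{\mathrm dt}\int_\R|\fe|^p\,\dx\le 0$. This gives $\|\fe(t,\cdot)\|_{L^p(\R)}\le\|\fin\|_{L^p(\R)}$ for every finite $p$, and the claim follows by letting $p\to\infty$, which is legitimate since $\fin\in L^1(\R)\cap L^\infty(\R)$ forces $\fe\in C^\infty([0,T];L^p(\R))$ for all $p\in[1,+\infty]$ by Proposition~\ref{prop:tempered}.
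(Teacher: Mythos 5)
Your proof is correct, but it takes a genuinely different route from the paper's. You read the bound off the explicit convolution representation \eqref{eq:Fconv}: after substituting $h\mapsto\varepsilon$ and $\eta\mapsto\eta/\varepsilon^2$, the kernel $F_\varepsilon(t,\cdot)$ of \eqref{eq:F} is a non-negative measure of total mass $1$ (the sum over $i$ gives $2^k$ and the series in $k$ cancels the exponential prefactor, which should read $e^{-2\rho\eta t/(3\varepsilon^2)}$ --- note the paper's formula \eqref{eq:F} omits the $t$ in the exponent, a typo your computation implicitly corrects), so convolution with $\fin$ contracts the $L^\infty$ norm; this is sound, and your remark on why a naive comparison with the constant $\Linorm{\fin}$ is not licensed by Proposition~\ref{prop:tempered} is a legitimate subtlety correctly handled. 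The paper instead argues intrinsically via Stampacchia's truncation method: with $K=\sup\fin$ and a smoothed positive-part function $\zeta$, it shows that $G(t)=\int_\R\int_0^{\fe(t,x)-K}\zeta(s)\,\ds\,\dx$ has non-positive derivative after a symmetrization by half-shifts, hence vanishes identically. Your kernel argument is shorter, yields $\|\fe(t,\cdot)\|_{L^p(\R)}\le\|\fin\|_{L^p(\R)}$ for every $p$ in one stroke, and makes the uniformity in $\varepsilon$ transparent; but it leans entirely on linearity and the explicit solvability of \eqref{eq:dynrescaled}, and so would not survive the passage to the constrained model of Section~\ref{Sec4}. The truncation argument is precisely the tool the paper recycles in Propositions~\ref{prop:positive2} and~\ref{prop:linorm}, where no explicit kernel exists, which is the reason it is introduced here as a warm-up.
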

\begin{proof}
The proof uses Stampacchia's truncation method \cite{S64}. 
Let $\zeta\in C^1(\R)$ be a smooth version of the positive part function, satisfying: 
 $\zeta'(s)>0$ for all $s>0$;
  $\zeta(s) = 0$ for all $s\leq 0$;
  and  $|\zeta(s)|\leq M$ for all $s\in\R$,
where $M$ is a positive constant. 
Let $\displaystyle K:= \sup_{x\in\R} \fin(x)$. 

Define $G$ as 
$$
G:t\mapsto \int_\R \int_0^{f_\varepsilon(t,x)-K} \zeta(s) \ds \; \dx.
$$
From the properties of $g$ and $f_\varepsilon$, $G\in C^1((0,T],\R)$, $G(0)=0$ and $G(t)\geq 0$ for all $t\in [0,T]$. We have
\begin{equation*}
\begin{split}
G'&(t) = \int_\R \zeta(f_\varepsilon(t,x)-K)\partial_t f_\varepsilon(t,x) \dx = \frac{\eta}{3}\rho \int_\R \zeta(f_\varepsilon(t,x)-K) ( f_\varepsilon(t,x+h) 
+ f_\varepsilon(t,x-h)  -2 f_\varepsilon(t,x)) \dx \\
& = \frac{\eta}{3}\rho \left[ \int_\R \zeta(f_\varepsilon(t,x)-K) ( f_\varepsilon(t,x+h) - f_\varepsilon(t,x) ) \dx - \int_\R \zeta(f_\varepsilon(t,x)-K) 
( f_\varepsilon(t,x) - f_\varepsilon(t,x-h) ) \dx \right] \\ 
& =   \frac{\eta\rho }{3} \int_\R \left[ \zeta\!\left(\!f_\varepsilon (t,x-\frac{h}{2})-K\!\right)\!-\zeta\!\left(\!f_\varepsilon (t,x+\frac{h}{2})-K\!\right)\!\right] \!\left[ 
\!\left(\!f_\varepsilon(t,x+\frac{h}{2})-K\!\right)\! - \!\left(\!f_\varepsilon (t,x-\frac{h}{2})-K\!\right)\!  \right]\!  \dx \\ 
&\leq 0.
\end{split}
\end{equation*}
Hence $G(t)=0$ for all $t\in [0,T]$, which implies that $f_\varepsilon(t,x)-K\leq 0$ for all $t\in [0,T]$ and \textcolor{black}{for a.e.} $x\in \R$.
\end{proof}

\begin{rem}
Lemma $\ref{lemma:Linf}$ shows that a solution initially in $L^1\cap L^\infty$ cannot blow up, and stays bounded in $L^\infty$ norm by its initial data. Equation \eqref{eq:dyn} can be seen as an semi-discretized version of the heat equation, but it does not 
have the strong smoothing effects given by the ultra-conservative estimates of the heat equation. 
\end{rem}

The main result of this subsection is gathered in the following proposition:
\begin{prop}
Let $\fin\in L^1(\R)\cap L^\infty(\R)$ and $T>0$. For each $\varepsilon>0$, let $\fe\in C([0,T];L^1(\R)\cap L^\infty(\R))$ denote the weak solution to \eqref{eq:dynrescaled}.
Then the sequence $(\fe)_{\varepsilon>0}$ converges weakly in $L^1([0,T]\times\R)$ and its limit $ \bar f := \lim_{\varepsilon\rightarrow 0} \fe$ is the unique solution to the heat equation 
\begin{equation}\label{eq:heat}
\begin{cases}
\displaystyle \partial_t  \bar f = \frac{\eta}{3}\Lnorm{\fin} \; \partial_x^2  \bar f  \\[13pt]
f(0,\cdot) = \fin.
\end{cases}
\end{equation}
\end{prop}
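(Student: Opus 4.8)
The plan is to obtain the limit by a soft compactness argument: first extract a weak limit of the family $(\fe)_{\varepsilon>0}$ in $L^1([0,T]\times\R)$, then pass to the limit in the weak formulation of \eqref{eq:dynrescaled}, and finally invoke uniqueness for the heat equation \eqref{eq:heat} to upgrade subsequential convergence to convergence of the whole family. I have two uniform bounds at hand. Mass conservation gives $\Lnorm{\fe(t,\cdot)}=\Lnorm{\fin}=\rho$ for every $t\in[0,T]$ and every $\varepsilon>0$, while Lemma \ref{lemma:Linf} gives the crucial uniform estimate $\Linormt{\fe}\leq\Linorm{\fin}$. Together these already prevent concentration, since for any measurable $A\subset[0,T]\times\R$ one has $\int_A|\fe|\leq\Linorm{\fin}\,|A|$; this is the absolute-continuity half of uniform integrability.

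The missing ingredient for relative weak compactness in $L^1([0,T]\times\R)$ is tightness in the unbounded variable $x$, and this is the heart of the matter. I would extract it from a uniform control of the second moment. Changing variables $x\mapsto x\pm\varepsilon$ in the shifted terms, the odd contributions cancel and one finds
\begin{equation*}
\Ddt\int_\R x^2\fe(t,x)\dx=\frac{\eta\rho}{3\varepsilon^2}\int_\R x^2\big(\fe(t,x+\varepsilon)+\fe(t,x-\varepsilon)-2\fe(t,x)\big)\dx=\frac{2\eta\rho^2}{3},
\end{equation*}
a production rate \emph{independent of} $\varepsilon$. Hence $\int_\R x^2\fe(t,x)\dx$ stays bounded on $[0,T]$ uniformly in $\varepsilon$, and Chebyshev's inequality gives $\int_{|x|>R}\fe(t,x)\dx\leq C_T/R^2$ uniformly in $t$ and $\varepsilon$. (If $\fin$ has no finite second moment, the same tightness follows from the representation $\fe=F_\varepsilon*\fin$ of Proposition \ref{prop:tempered}, since $F_\varepsilon(t,\cdot)$ is a probability measure whose variance equals $\tfrac{2\eta\rho}{3}t$, uniformly in $\varepsilon$.) By the Dunford--Pettis theorem I may then extract a subsequence $\fe\rightharpoonup\bar f$ weakly in $L^1([0,T]\times\R)$.

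To identify the limit I would test \eqref{eq:dynrescaled} against $\varphi\in C_c^\infty([0,T)\times\R)$ and move the discrete Laplacian onto the test function via $\int_\R\fe(t,x\pm\varepsilon)\varphi(t,x)\dx=\int_\R\fe(t,x)\varphi(t,x\mp\varepsilon)\dx$, obtaining
\begin{equation*}
-\int_0^T\!\!\int_\R\fe\,\partial_t\varphi\dx\dt-\int_\R\fin\,\varphi(0,\cdot)\dx=\frac{\eta\rho}{3}\int_0^T\!\!\int_\R\fe\,\frac{\varphi(t,x+\varepsilon)+\varphi(t,x-\varepsilon)-2\varphi(t,x)}{\varepsilon^2}\dx\dt.
\end{equation*}
A Taylor expansion yields $\big\|\varepsilon^{-2}\big(\varphi(\cdot,x+\varepsilon)+\varphi(\cdot,x-\varepsilon)-2\varphi(\cdot,x)\big)-\partial_x^2\varphi\big\|_{L^\infty}\leq\tfrac{\varepsilon^2}{12}\|\partial_x^4\varphi\|_{L^\infty}$. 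Splitting the right-hand side accordingly, the error term is bounded by $T\rho\cdot\tfrac{\varepsilon^2}{12}\|\partial_x^4\varphi\|_{L^\infty}\to0$, while the main term converges to $\tfrac{\eta\rho}{3}\int_0^T\!\int_\R\bar f\,\partial_x^2\varphi$ because it pairs the weakly convergent $L^1$ sequence $\fe$ with the fixed $L^\infty$ function $\partial_x^2\varphi$. The two terms on the left pass to the limit directly, so $\bar f$ is a distributional solution of \eqref{eq:heat} with datum $\fin$.

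Since \eqref{eq:heat} is the classical heat equation with the positive constant diffusivity $\tfrac{\eta}{3}\Lnorm{\fin}$, its solution in the class of bounded, finite-mass distributions with datum $\fin$ is unique; hence every weakly convergent subsequence shares the same limit $\bar f$, and the entire family converges. I expect the tightness step to be the only genuine obstacle: neither the $L^1$ nor the $L^\infty$ bound rules out mass escaping to infinity as $\varepsilon\to0$, and it is precisely the $\varepsilon$-independence of the second-moment production rate that closes the gap.
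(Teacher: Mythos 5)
Your proof is correct and its skeleton is the same as the paper's: uniform $L^1$ bound from mass conservation, uniform $L^\infty$ bound from Lemma \ref{lemma:Linf}, Dunford--Pettis, then passage to the limit in the weak formulation by transferring the discrete Laplacian onto the test function and Taylor-expanding. Where you genuinely diverge is in the two ingredients you add, and both are improvements rather than detours. First, the paper asserts equi-integrability directly from the $L^1$ and $L^\infty$ bounds and invokes Dunford--Pettis; on the unbounded domain $[0,T]\times\R$ those bounds only forbid concentration, not escape of mass to infinity, so tightness does need a separate argument. Your computation that $\frac{\mathrm{d}}{\mathrm{d}t}\int_\R x^2\fe\dx=\frac{2\eta\rho^2}{3}$ independently of $\varepsilon$ (or, without assuming a finite initial second moment, the observation that $F_\varepsilon(t,\cdot)$ in the representation $\fe=F_\varepsilon*\fin$ is a probability measure of variance $\frac{2\eta\rho t}{3}$) supplies exactly the missing Chebyshev tightness, and your instinct that this is the one genuine obstacle is right. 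Second, the paper stops after showing that a subsequential limit solves \eqref{eq:heat} in the distributional sense, whereas the statement claims convergence of the whole family; your appeal to uniqueness for the constant-coefficient heat equation in the class of bounded, finite-mass solutions is the standard and correct way to close that gap. In short: same route, but your version is the complete one.
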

\begin{proof}
The rescaling of system \eqref{eq:dyn} by $\varepsilon^2$ does not change the conservation of mass, \textit{i.e.} for all $t\in [0,T]$, $\Lnorm{\fe(t,\cdot)} = \Lnorm{\fin}$.
Consequently, $\Lnormt{\fe} \leq T\Lnorm{\fin}$.
Furthermore, from Lemma \ref{lemma:Linf}, $\Linorm{\fe(t,\cdot)}\leq \Linorm{\fin}$ for all $t\in [0,T]$, so the sequence $(\fe)_{\varepsilon>0}$ is equi-integrable. 
By the Dunford-Pettis theorem, there exists a subsequence (that we denote again by $(\fe)_{\varepsilon>0}$) that converges weakly to $\bar f\in L^1([0,T]\times\R)$. 
Let us show that $\bar f$ is a weak solution to \eqref{eq:heat}.
Let $\varphi\in C_c^\infty([0,T]\times\R)$. As a weak solution to \eqref{eq:dynrescaled}, $\fe$ satisfies: 
$$
\int_0^T \int_\R \fe(t,x) \partial_t \varphi(t,x) \dx \dt +  \int_\R \fin(x) \varphi(0,x) \dx + 
$$
$$
\frac{\eta}{3}\Lnorm{\fin} \frac{1}{\varepsilon^2} \int_0^T \int_\R \varphi(t,x) \left(\fe(t,x+\varepsilon)+\fe(t,x-\varepsilon)-2\fe(t,x)\right) \dx \dt=0. 
$$
Now
\begin{equation*}
\begin{split}
& \frac{\eta}{3}\Lnorm{\fin} \frac{1}{\varepsilon^2} \int_0^T \int_\R \varphi(t,x) \left(\fe(t,x+\varepsilon)+\fe(t,x-\varepsilon)-2\fe(t,x)\right) \dx  \dt  \\
& = \frac{\eta}{3}\Lnorm{\fin} \frac{1}{\varepsilon^2} \int_0^T \int_\R \fe(t,x) \left(\varphi(t,x+\varepsilon)+\varphi(t,x-\varepsilon)-2\varphi(t,x)\right) \dx  \dt \\
& = \frac{\eta}{3}\Lnorm{\fin} \int_0^T \int_\R \fe(t,x) \left( \partial_x^2 \varphi(t,x) + \mathcal{O}(\varepsilon^2) \right) \dx \dt .
\end{split}
\end{equation*}
Since $\partial_t\varphi\in L^\infty([0,T]\times \R)$ and $\partial_x^2\varphi\in L^\infty([0,T]\times \R)$, from the weak convergence of $\fe$ to $\bar f$ we have: 
$$
\lim_{\varepsilon\rightarrow 0} \int_0^T \int_\R \fe(t,x) \partial_t \varphi(t,x) \dx \dt = 
\int_0^T \int_\R \bar f(t,x) \partial_t \varphi(t,x) \dx \dt
$$ 
as well as 
$$
\lim_{\varepsilon\rightarrow 0} \int_0^T \int_\R  \fe(t,x) \partial_x^2 \varphi(t,x) \dx \; \dt = \int_0^T \int_\R  \bar f(t,x) \partial_x^2\varphi(t,x)  \dx \dt.
$$
Hence $\bar f$ satisfies
 $$
\int_0^T \int_\R \bar f(t,x) \partial_t \varphi(t,x) \dx  \dt + \int_\R \fin(x) \varphi(0,x) \dx + \frac{\eta}{3}\Lnorm{\fin}  \int_0^T \int_\R \bar f(t,x) \partial_x^2\varphi(t,x)  \dx \dt=0
 $$
for all $\varphi\in C_c^\infty([0,T]\times\R)$.
\end{proof}

\subsection{Numerical simulations}
\label{subs/numsim1}

We collect here some simulations, computed by means of a standard finite difference solver, describing the time evolution of the solution for various values of
the payoff $h$. In all the simulations, the initial condition is $\fin\equiv \mathbbm{1}_{[0,1)}$.

It is apparent that there is no gain in regularity and that, for small $h$, the solution numerically tends to a gaussian profile, as in the case of the standard
heat equation.

\begin{figure}[h!]
\begin{center}
\includegraphics[width=0.32\textwidth]{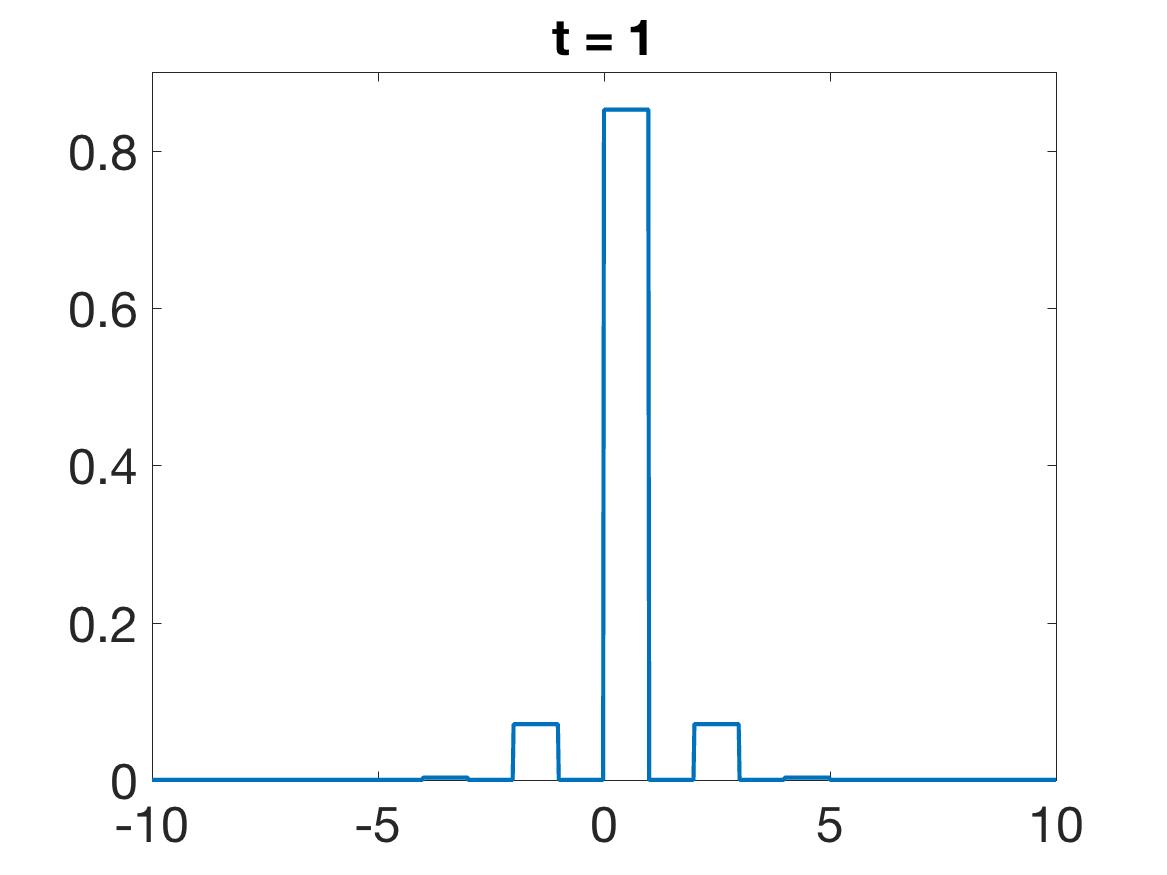}
\includegraphics[width=0.32\textwidth]{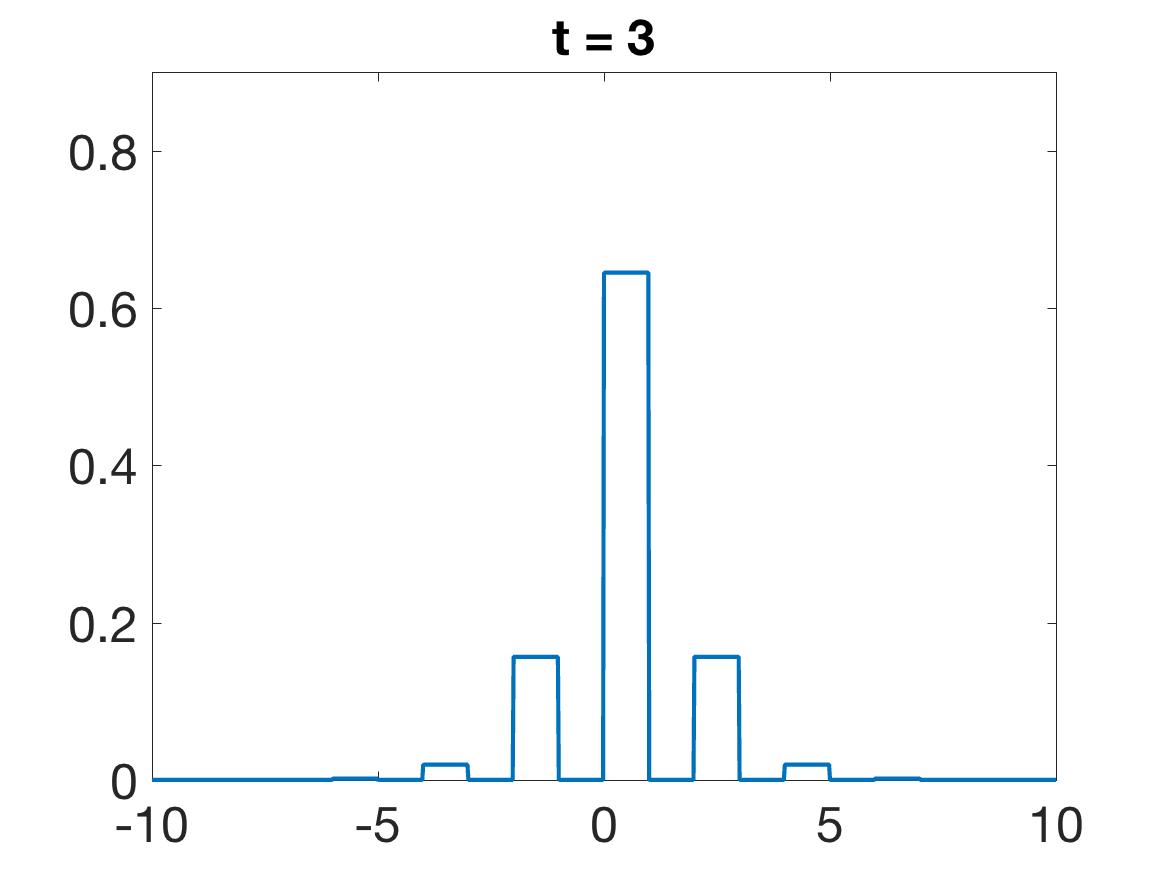}
\includegraphics[width=0.32\textwidth]{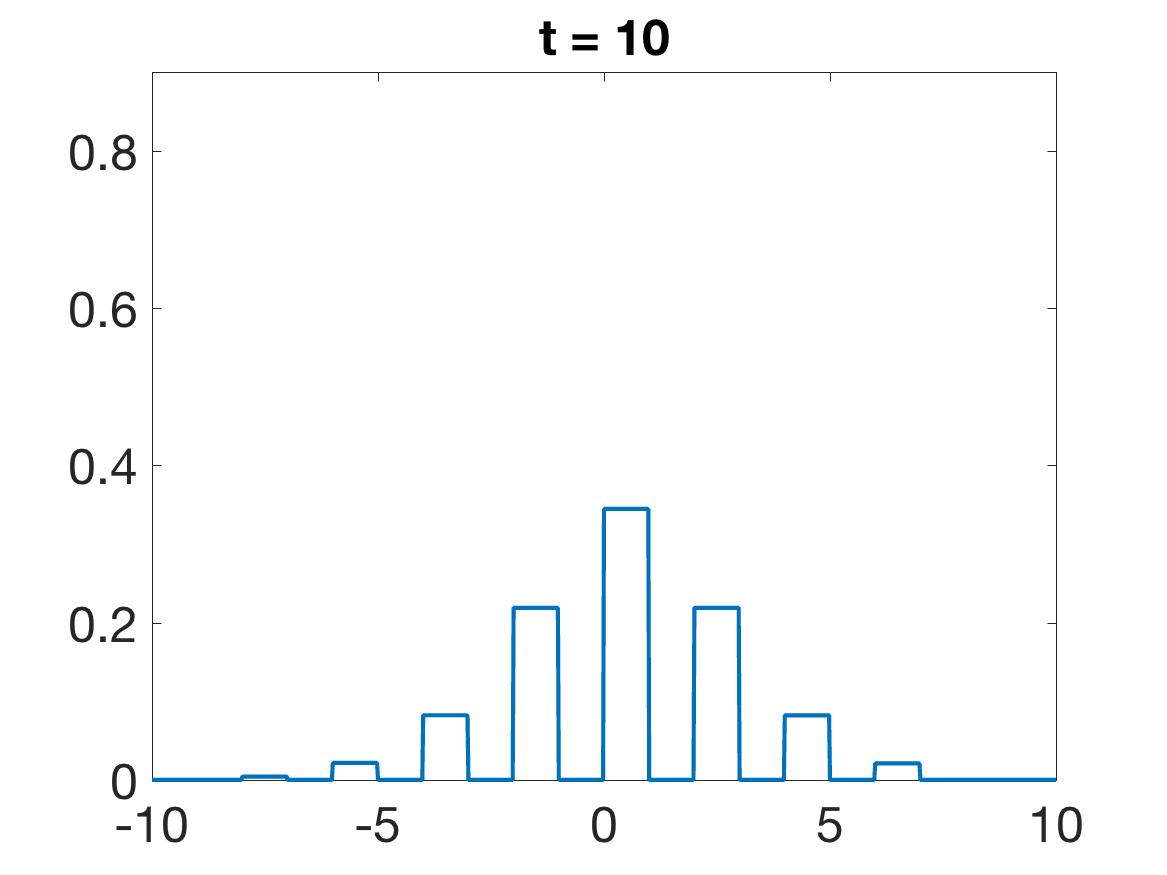}
\end{center}
\caption{Solution of the rescaled equation \eqref{eq:dynrescaled} with $\varepsilon=2$ and initial data $\fin\equiv \mathbbm{1}_{[0,1)}$.}
\label{fig1}
\end{figure}

\begin{figure}[h!]
\begin{center}
\includegraphics[width=0.32\textwidth]{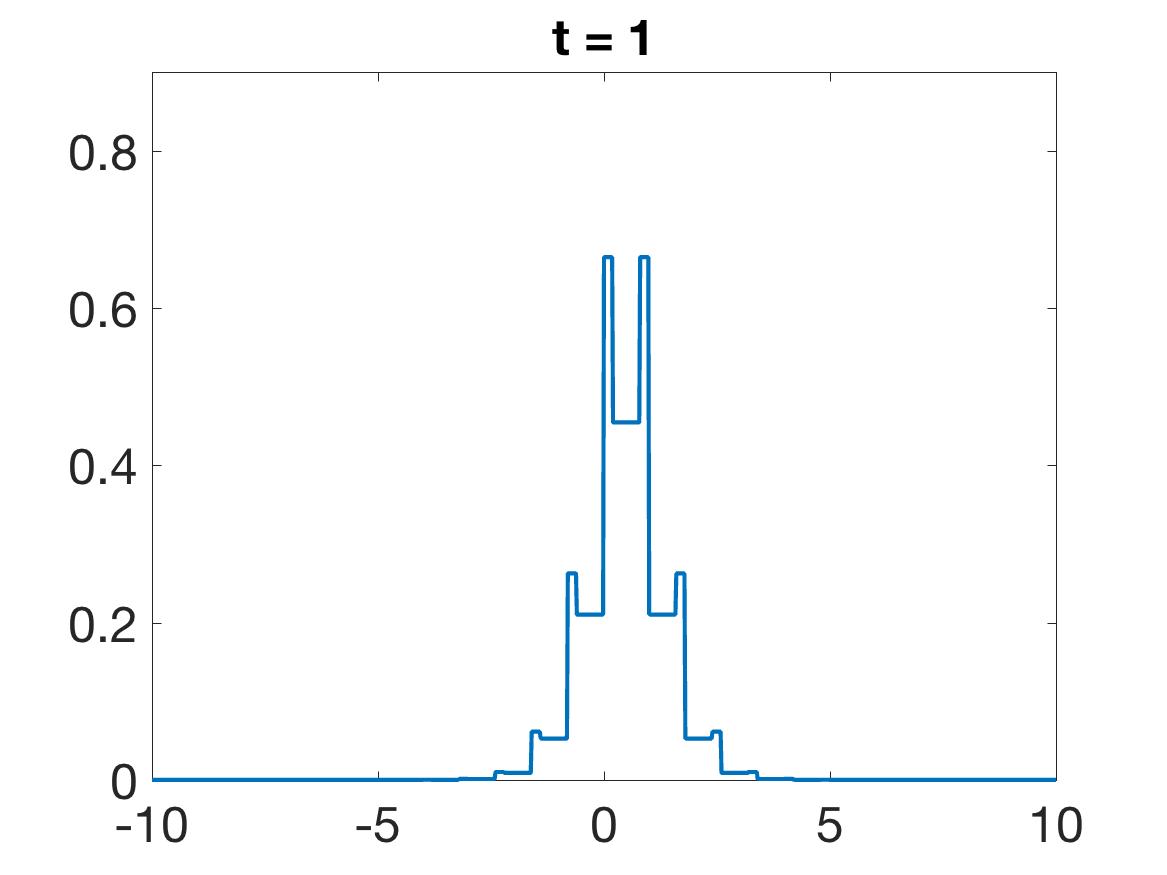}
\includegraphics[width=0.32\textwidth]{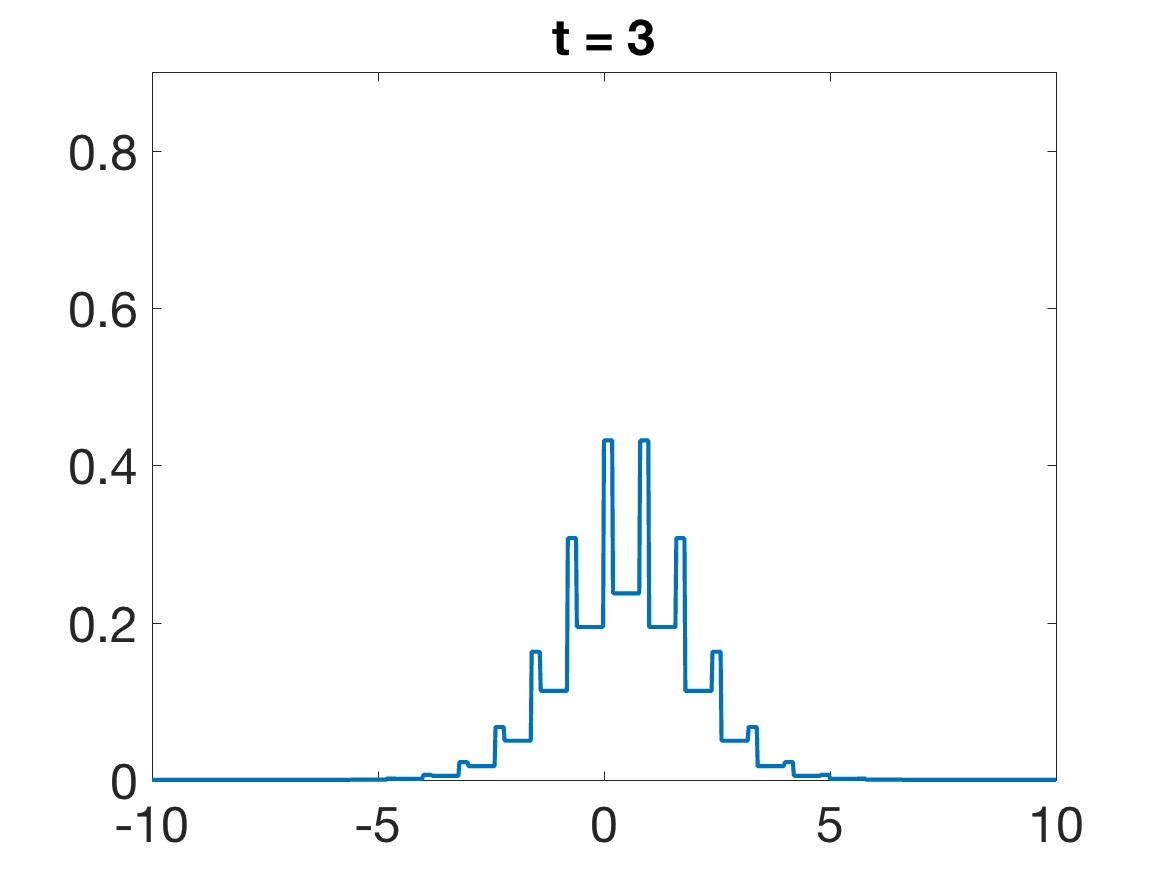}
\includegraphics[width=0.32\textwidth]{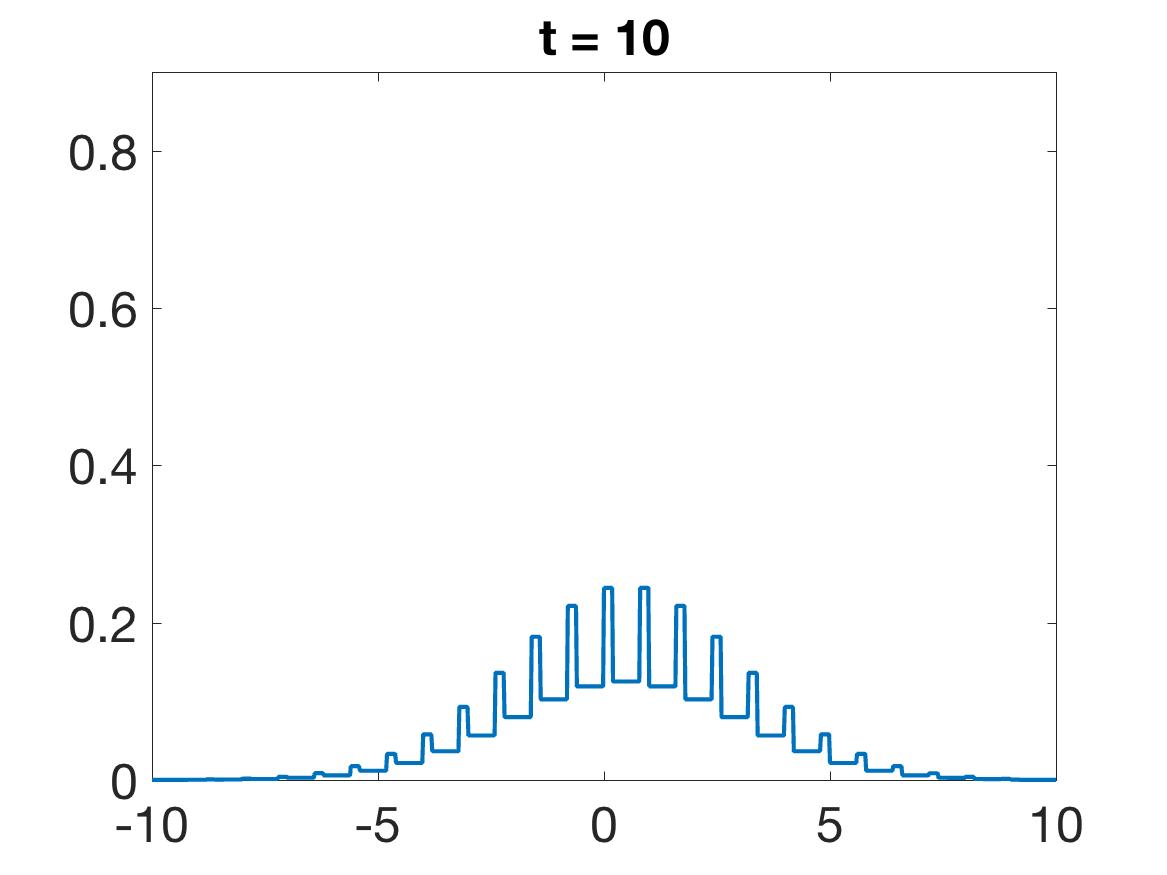}
\end{center}
\caption{Solution of the rescaled equation \eqref{eq:dynrescaled} with $\varepsilon=0.8$ and initial data $\fin\equiv \mathbbm{1}_{[0,1)}$.}
\label{fig2}
\end{figure}

\begin{figure}[h!]
\begin{center}
\includegraphics[width=0.32\textwidth]{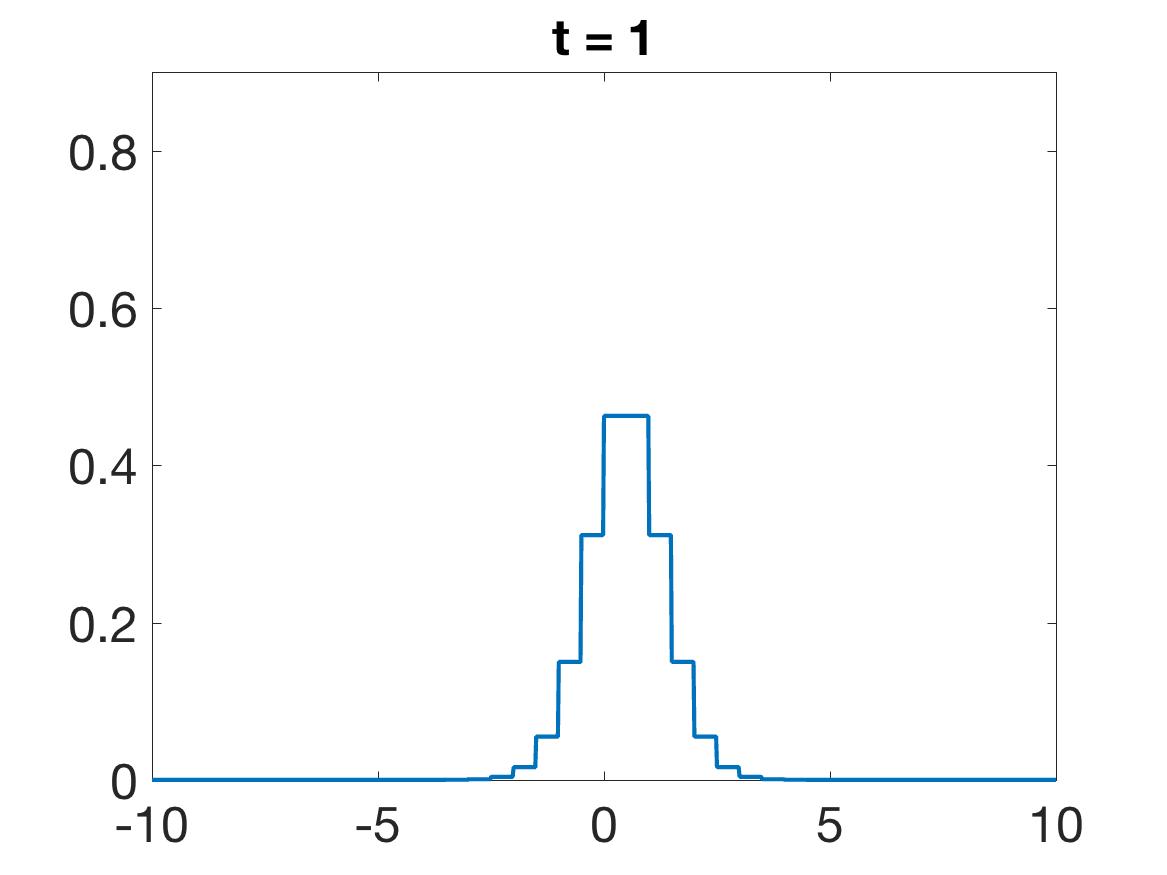}
\includegraphics[width=0.32\textwidth]{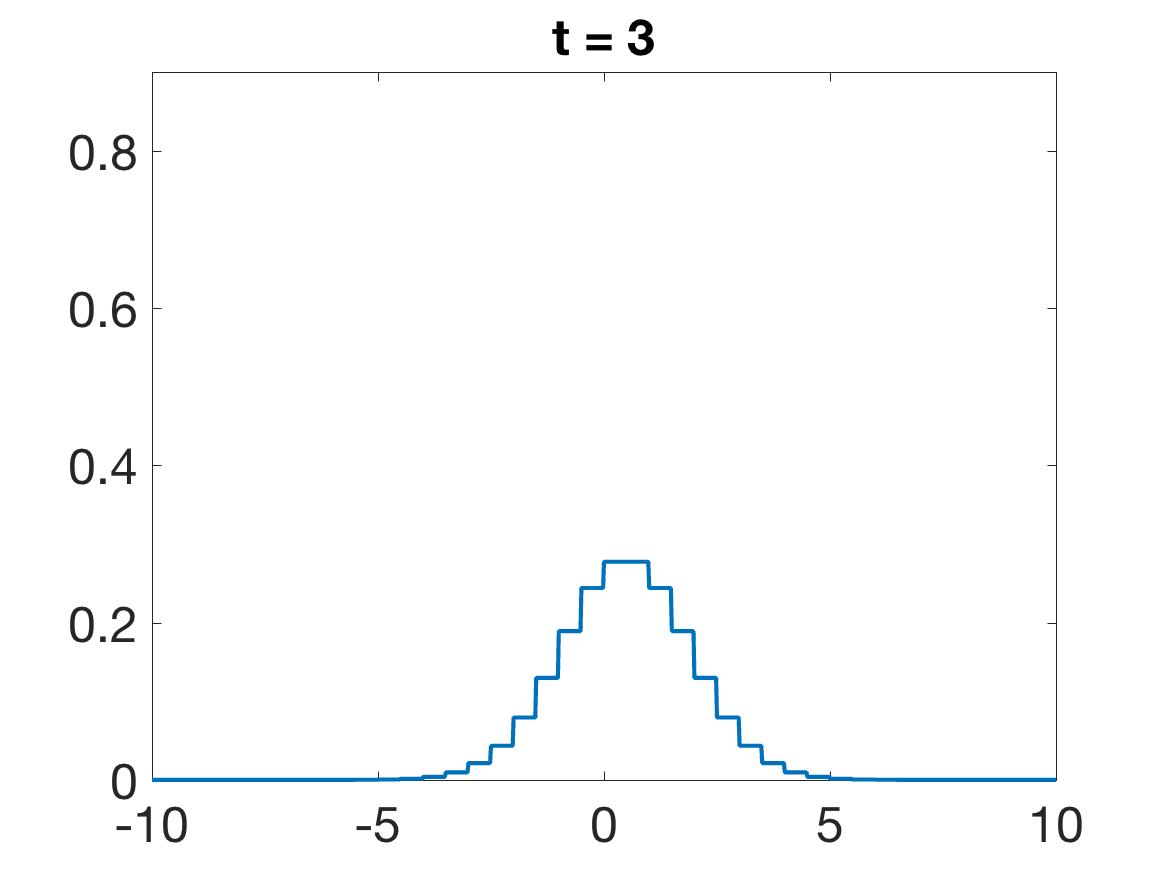}
\includegraphics[width=0.32\textwidth]{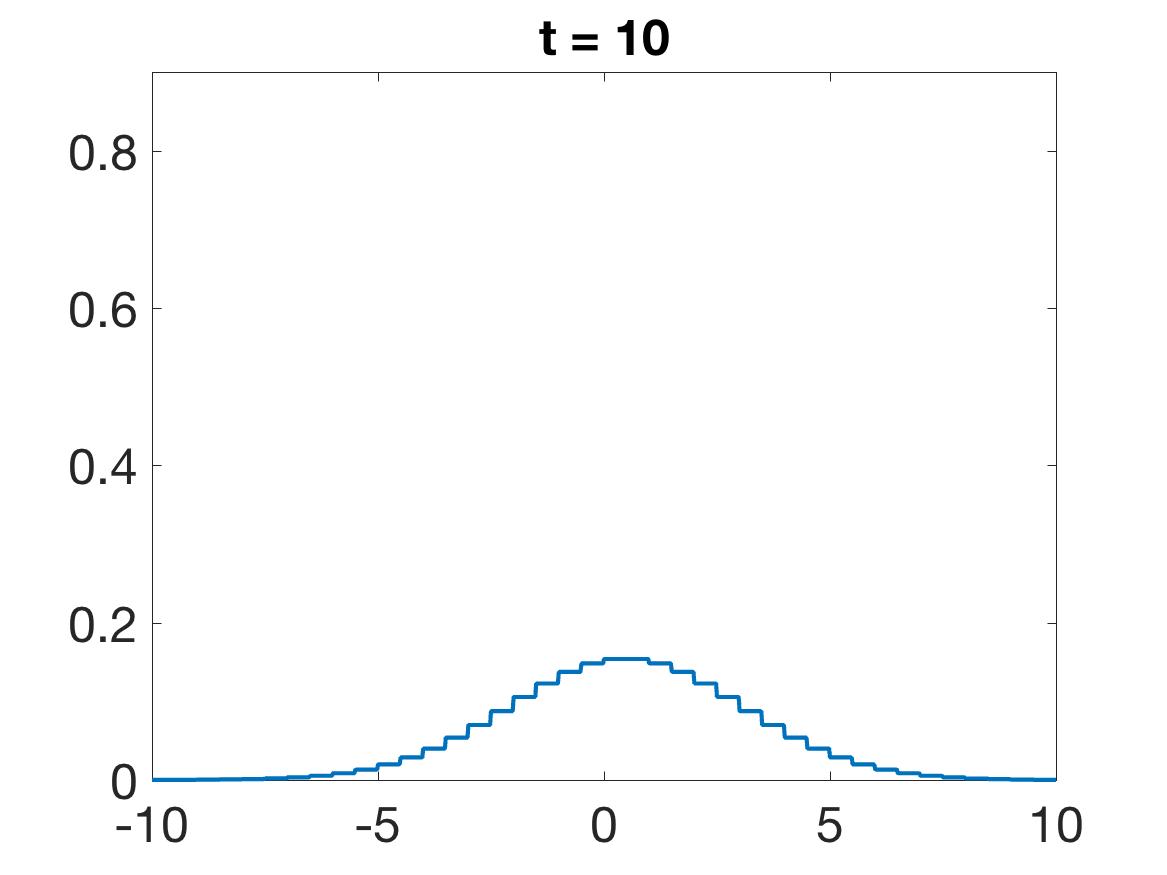}
\end{center}
\caption{Solution of the rescaled equation \eqref{eq:dynrescaled} with $\varepsilon=0.5$ and initial data $\fin\equiv \mathbbm{1}_{[0,1)}$.}
\label{fig3}
\end{figure}

We finally note that, when the payoff $h$ is greater than the magnitude of the support of the initial conditions, then there are some regions in which the density is zero for all $t\in\R^+$. This behavior shows that the solution may not have compact support even if the initial condition has compact support \textcolor{black}{(see Figures \ref{fig1}, \ref{fig2} and \ref{fig3})}.

\section{The constrained model}
\label{Sec4}

We now consider a framework in which the exchange variable cannot become negative, \ie $x\in\R^+$.
Then the model  takes the following form:
\begin{equation}\label{eq:dynnodebt}
\begin{cases}
\displaystyle \frac{1}{\eta}\dfdt = \frac{1}{3}\int_h^{+\infty}  f(t,x_*)\dx_*\left[\mathds{1}_{x\geq 2h}  f(t,x-h) 
+ \mathds{1}_{x\geq 0} f(t,x+h)
- \mathds{1}_{x\geq h} 2 f(t,x) \right]\\[13pt]
f(0,x) = \fin(x)
\end{cases}
\end{equation}
where we consider integrable and positive initial data, \textit{i.e.} $\fin\in L^1(\R^+)$ with $\fin(x)\geq 0$ for almost all $x\in\R^+$.

The model is clearly \textcolor{black}{nonlinear}. In what follows, we will work in the $L^1$ setting, which is particularly appropriate for this kind of problems because of the physical meaning of the density function $f$ and the typology of tools used in the proofs.

As for \eqref{eq:dyn}, we can easily show that the mass and the first moment are formally conserved: 
$$
\forall t\in [0,T], \quad \int_0^{+\infty} f(t,x) \dx = \Lnormp{\fin}:=\rho \quad \text{and} \int_0^{+\infty} xf(t,x) \dx = \Lnormp{x\fin}.
$$
However, this does not simplify equation \eqref{eq:dynnodebt}, which stays \textcolor{black}{nonlinear}. 
From here onward, the following notation will be useful:
$$
\beta(t)=\int_h^{+\infty}  f(t,x_*)\dx_*.
$$

\subsection{Basic properties}

We start by focusing on the behavior of the mass of the ``tail'' of the solution, $\beta(t)$.
We first provide a lower bound on $\beta(t)$. 
\begin{prop}\label{prop:betaconv}
The tail of the distribution $\beta$ satisfies
\begin{equation}\label{eq:beta1pos}
\beta(t)\geq \frac{\beta(0)}{\beta(0){\eta}t/3+1} \quad \text{ for all } t \in \R^+.
\end{equation}
\end{prop}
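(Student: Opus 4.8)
The plan is to derive a closed differential inequality for $\beta$ alone by differentiating its definition and integrating the evolution equation \eqref{eq:dynnodebt} over the tail region $\{x\geq h\}$. Since the factor $\int_h^{+\infty} f(t,x_*)\dx_*=\beta(t)$ does not depend on $x$, I would pull it out and write
$$
\frac{1}{\eta}\beta'(t) = \frac{1}{3}\beta(t)\int_h^{+\infty}\left[\mathds{1}_{x\geq 2h}\, f(t,x-h) + \mathds{1}_{x\geq 0}\, f(t,x+h) - \mathds{1}_{x\geq h}\, 2 f(t,x)\right]\dx.
$$
This step presupposes enough regularity on $f$ to differentiate under the integral sign, as well as the nonnegativity of $f$ inherited from the initial datum; both are guaranteed by the well-posedness and positivity properties of the solution of \eqref{eq:dynnodebt}.

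The heart of the computation is the evaluation of the three integrals via the substitutions $y=x-h$ and $y=x+h$. The first term gives $\int_{2h}^{+\infty} f(t,x-h)\dx = \int_h^{+\infty} f(t,y)\,dy = \beta(t)$; the second gives $\int_h^{+\infty} f(t,x+h)\dx = \int_{2h}^{+\infty} f(t,y)\,dy = \beta(t)-\int_h^{2h} f(t,y)\,dy$; and the third is simply $2\beta(t)$. Summing, the leading contributions cancel exactly (this is precisely the mass balance that yields conservation in the unconstrained case), and the only surviving term is the boundary defect produced by the constraint:
$$
\beta'(t) = -\frac{\eta}{3}\,\beta(t)\int_h^{2h} f(t,y)\,dy.
$$

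From here the conclusion is immediate. Since $f(t,\cdot)\geq 0$, one has $\int_h^{2h} f(t,y)\,dy \leq \int_h^{+\infty} f(t,y)\,dy = \beta(t)$, whence the Riccati-type differential inequality $\beta'(t) \geq -\tfrac{\eta}{3}\,\beta(t)^2$. Assuming $\beta(0)>0$ (the case $\beta(0)=0$ being trivial by positivity), the solution stays strictly positive, so I may set $g=1/\beta$ and rewrite the inequality as $g'(t)\leq \eta/3$. Integrating on $[0,t]$ gives $1/\beta(t)\leq 1/\beta(0) + \eta t/3$, and inverting this yields exactly \eqref{eq:beta1pos}.

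The only genuinely delicate point is the bookkeeping of the indicator functions and the shifts $\pm h$: one must be careful that each of the three integrals is taken over the correct half-line, since it is precisely the mismatch between the domain $\{x\geq 2h\}$ in the gain term and $\{x\geq h\}$ in the loss term that produces the loss contribution $-\int_h^{2h} f$. Everything else — the positivity bound and the solution of the scalar differential inequality — is routine once this defect term has been isolated.
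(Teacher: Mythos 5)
Your computation of the exact balance law $\beta'(t)=-\tfrac{\eta}{3}\beta(t)\int_h^{2h}f(t,y)\,\mathrm{d}y$ is correct (it is precisely the identity the paper establishes later, in Proposition \ref{prop:beta1}), and the Riccati comparison $g=1/\beta$, $g'\leq \eta/3$ does deliver \eqref{eq:beta1pos}. The genuine gap is the step where you invoke the nonnegativity of $f$ to pass from that identity to $\beta'\geq-\tfrac{\eta}{3}\beta^2$ (and to dismiss the case $\beta(0)=0$). In this paper that positivity is \emph{not} available at the point where the proposition is stated: it is proved afterwards, in Proposition \ref{prop:positive2}, \emph{using} the conclusion $\beta(t)\geq 0$ of the present proposition (the sign of $G'$ in that Stampacchia argument is controlled only because $\beta\geq 0$ is already known). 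Taking ``the positivity properties of the solution'' as guaranteed therefore makes your argument circular relative to the paper's logical order.

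Note that what you actually need is not $f\geq 0$ pointwise but only $\beta_1\beta_2\geq 0$, where $\beta_k(t)=\int_{kh}^{+\infty}f(t,x)\,\mathrm{d}x$: the inequality $\beta_1'=-\tfrac{\eta}{3}\beta_1(\beta_1-\beta_2)\geq-\tfrac{\eta}{3}\beta_1^2$ is equivalent to $\beta_1\beta_2\geq 0$. The paper secures this by a continuation argument on the whole family $(\beta_k)_{k\in\N}$: on the maximal interval where every $\beta_k$ is nonnegative, one has $\tfrac{\mathrm{d}}{\mathrm{d}t}\beta_k\geq-\tfrac{2\eta}{3}\beta_1\beta_k$ for $k\geq 2$ together with $\beta_1'\geq-\tfrac{\eta}{3}\beta_1^2$, and integrating these gives explicit nonnegative lower bounds that prevent any $\beta_k$ from crossing zero; this yields simultaneously $\beta_k\geq 0$ for all $k$ and the bound \eqref{eq:beta1pos}, using only integrated tail masses and never the pointwise sign of $f$. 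To repair your proof you must either supply an independent proof that $f\geq 0$ (one not relying on $\beta\geq 0$) or replace the appeal to positivity of $f$ by such a self-consistent argument on the $\beta_k$.
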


\begin{proof}

For all $k\in \N$, we define the quantities
$$
\beta_k(t) = \int_{kh}^{+\infty} f(t,x)\dx.
$$
In particular, $\beta_0\equiv \rho$ and $\beta_1\equiv\beta$.
We start by proving that $\beta_k(t)\geq 0$ for all $k\in\N$ and all $t\geq 0$.
Since $\fin\geq 0$, for all $k\in\N$, $\beta_k(0)\geq 0$. 
While for all $k\in\N$, $\beta_k(t)\geq 0$, 
we have
\begin{equation*}
\begin{cases}
\displaystyle \ddt \beta_1 = -\frac{\eta}{3}\beta_1(\beta_1-\beta_2)\geq -\frac{\eta}{3}\beta_1^2 \\[13pt]
\displaystyle \ddt \beta_k = \frac{\eta}{3}\beta_1(\beta_{k-1} + \beta_{k+1} - 2 \beta_k) \geq -2 \frac{\eta}{3}\beta_1\beta_k 
\end{cases}
\end{equation*}
which implies by integration that
\begin{equation}
\label{eq:betaconv}
\begin{cases}
\displaystyle  \beta_1(t) \geq \beta_1(0)\left( \beta_1(0)\frac{\eta}{3} t +1\right)^{-1} \geq 0 \\[13pt]
\displaystyle  \beta_k(t) \geq \beta_k(0) \exp\left (-\frac{2}{3}\eta\int_0^t\beta_1(s)\ds\right) \geq 0.
\end{cases}
\end{equation}
Hence $\beta_k(t)\geq 0$ for all \textcolor{black}{$t\in\R^+$ and \eqref{eq:beta1pos} holds}. 
\end{proof}

In particular, the previous proposition implies that if $\beta(0)\geq 0$, then $\beta(t)\geq 0 $ for all $t\in\R^+$. This allows us to prove the non-negativity of the solution to \eqref{eq:dynnodebt} starting from non-negative initial data.

\begin{prop}\label{prop:positive2}
Let $\fin\in L^1(\R^+)$ such that $\fin(x)\geq 0$ for almost every $x\in\R^+$. Then for almost all $(t,x)\in [0,T]\times \R^+$, $f(t,x)\geq 0$.
\end{prop}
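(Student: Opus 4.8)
The plan is to exploit the fact, already established in Proposition \ref{prop:betaconv}, that $\beta(t)\geq 0$ for all $t\in\R^+$ \emph{independently} of the sign of $f$. Consequently, on $[0,T]$ the right-hand side of \eqref{eq:dynnodebt} can be read as a linear operator acting on $f$ with the non-negative, time-dependent weight $\beta(t)$, and its structure is that of a Markov-type generator: the diagonal contribution $-2\,\mathds{1}_{x\geq h} f(t,x)$ is a pure loss term, while the two transport contributions $\mathds{1}_{x\geq 2h} f(t,x-h)$ and $\mathds{1}_{x\geq 0} f(t,x+h)$ act as gain terms carrying a non-negative prefactor. This sign pattern is exactly what propagates non-negativity, and I would make it rigorous via Stampacchia's truncation method on the negative part, in the spirit of Lemma \ref{lemma:Linf}.

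Concretely, let $\zeta$ be the bounded smooth approximation of the Heaviside function from Lemma \ref{lemma:Linf} ($\zeta(s)=0$ for $s\leq 0$, $\zeta'(s)>0$ for $s>0$, $0\leq\zeta\leq M$), and set
$$
G(t) := \int_{\R^+}\int_0^{-f(t,x)}\zeta(s)\ds\,\dx .
$$
Since $\fin\geq 0$ one has $G(0)=0$, and $G(t)\geq 0$ for all $t$, with $G$ controlling the negative part $f^-:=\max(-f,0)$. Differentiating and inserting \eqref{eq:dynnodebt} gives
$$
G'(t) = -\frac{\eta}{3}\beta(t)\int_{\R^+}\zeta(-f(t,x))\Big[\mathds{1}_{x\geq 2h} f(t,x-h) + \mathds{1}_{x\geq 0} f(t,x+h) - 2\,\mathds{1}_{x\geq h} f(t,x)\Big]\dx .
$$

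I would then split the integral into its three pieces. On the diagonal term, wherever $\zeta(-f(t,x))>0$ one has $f(t,x)<0$, so $\zeta(-f(t,x))f(t,x)\leq 0$ and the corresponding contribution $\tfrac{2\eta}{3}\beta(t)\int_{x\geq h}\zeta(-f)f\dx$ is non-positive: this is the favourable term. For the two transport pieces I would use $f(t,x\pm h)\geq -f^-(t,x\pm h)$ together with $0\leq\zeta\leq M$ and the translation invariance of the Lebesgue measure to bound each of them by $\tfrac{\eta}{3}M\,\beta(t)\,\Lnormp{f^-(t,\cdot)}$. Discarding the non-positive diagonal term yields $G'(t)\leq \tfrac{2\eta}{3}M\,\beta(t)\,\Lnormp{f^-(t,\cdot)}$.

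To close the argument I would let $\zeta$ tend to the sharp Heaviside function, so that $M\to 1$ and $G(t)\to \Lnormp{f^-(t,\cdot)}=:m(t)$, producing $m'(t)\leq \tfrac{2\eta}{3}\beta(t)\,m(t)$ with $m(0)=0$. Since $\beta$ is bounded on $[0,T]$ — indeed $0\leq\beta(t)\leq\|f(t,\cdot)\|_{L^1(\R^+)}$, finite by the regularity of the solution — Gr\"onwall's lemma forces $m\equiv 0$, i.e. $f(t,\cdot)\geq 0$ for a.e. $x$ and all $t\in[0,T]$. The main obstacle I anticipate is purely bookkeeping: tracking the three indicator-defined regions ($0\leq x<h$, $h\leq x<2h$, $x\geq 2h$) so that no spurious boundary contribution is lost, and justifying the differentiation of $G$ (and the passage $\zeta\to\mathds{1}_{\{\cdot>0\}}$) at the available regularity of $f$; the sign structure itself does all the real work. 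As an alternative I would note that the same conclusion follows from a Duhamel/Picard argument: rewriting \eqref{eq:dynnodebt} with the integrating factor $\exp\!\big(\tfrac{2\eta}{3}\mathds{1}_{x\geq h}\int_0^t\beta\big)$ turns the iteration into one mapping non-negative functions to non-negative functions, whence the fixed point $f$ inherits non-negativity.
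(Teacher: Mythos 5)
Your argument is correct, and it shares its starting point with the paper's proof (Stampacchia truncation with the same $\zeta$, and the crucial observation that $\beta(t)\geq 0$ from Proposition \ref{prop:betaconv} is available \emph{before} knowing the sign of $f$), but the closure is genuinely different. The paper defines $G$ with the spatial integral restricted to $[h,+\infty)$, performs the half-shift change of variables to pair the gain and loss terms into the monotone expression $[\zeta(-f(x-\tfrac h2))-\zeta(-f(x+\tfrac h2))][f(x+\tfrac h2)-f(x-\tfrac h2)]\geq 0$ plus a boundary term $\int_h^{2h}\zeta(-f)(-f)\dx\geq 0$, concludes $G'\leq 0$ hence $G\equiv 0$ outright, and then treats the strip $[0,h)$ separately via the pointwise ODE $\partial_t f=\tfrac{\eta}{3}\beta f(\cdot+h)\geq 0$. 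You instead integrate over all of $\R^+$, keep only the sign of the diagonal term, crudely bound the two gain terms by $M\Lnormp{f^-(t,\cdot)}$ using translation invariance, and close with Gr\"onwall after sharpening $\zeta$ to the Heaviside function. Your route avoids both the change-of-variables bookkeeping and the separate treatment of $[0,h)$, and is more robust to the precise structure of the gain terms; the paper's route avoids the limit $\zeta\to\mathds{1}_{\{\cdot>0\}}$ and Gr\"onwall entirely by exploiting the dissipative (monotone) structure. One small point of rigor in your version: since $m(t)=\Lnormp{f^-(t,\cdot)}$ is not a priori differentiable, you should pass to the limit in the \emph{integrated} inequality $G(t)\leq \tfrac{2\eta M}{3}\int_0^t\beta(s)\,m(s)\ds$ (monotone convergence along $\zeta_n\uparrow\mathds{1}_{\{\cdot>0\}}$, $M_n\to 1$) and then apply the integral form of Gr\"onwall, rather than differentiating $m$; with that adjustment the proof is complete.
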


\begin{proof}
Like in the proof of Lemma \ref{lemma:Linf},
we use the smoothing of the positive value function $\zeta$, and introduce the function 
$$
G:t\mapsto \int_{x\geq h} \int_0^{-f(t,x)} \zeta(s)\ds  \dx.
$$
Notice that the lower bound of the space integral constituting $G$ is different from that of the function defined in the proof of 
Lemma \ref{lemma:Linf}.
We compute the derivative of $G$:
\begin{equation*}
\begin{split}
G'(t) = & \textcolor{black}{-\int_h^{+\infty}} \zeta(-f(t,x))\partial_t f(t,x) \dx \\
 = & - \frac{\eta}{3}\beta(t) \int_h^{+\infty} \zeta(-f(t,x)) \left( f(t,x+h) + f(t,x-h)\mathds{1}_{x\geq 2 h} -2 f(t,x)\right) \dx \\
 = & - \frac{\eta}{3}\beta(t) \bigg[ \int_h^{+\infty} \zeta(-f(t,x)) ( f(t,x+h) - f(t,x) ) \dx - \int_{2h}^{+\infty} \zeta(-f(t,x)) ( f(t,x) - f(t,x-h) ) \dx  \\
& + \int_h^{2h} \zeta(-f(t,x))(-f(t,x))\dx \bigg] \\ 
 = & - \frac{\eta}{3}\beta(t) \bigg[ \int_{{3h}/{2}}^{+\infty} \zeta\left(-f\left(t,x-\frac{h}{2}\right)\right) \left( f\left(t,x+\frac{h}{2}\right) - f\left(t,x-\frac{h}{2}\right) \right) \dx  \\
& -\int_{{3h}/{2}}^{+\infty} \zeta\left(-f\left(t,x+\frac{h}{2}\right)\right) \left( f\left(t,x+\frac{h}{2}\right) - f\left(t,x-\frac{h}{2}\right) \right) \dx  
 + \int_h^{2h} \zeta(-f(t,x))(-f(t,x))\dx \bigg] \\ 
 = & - \frac{\eta}{3}\beta(t) \bigg\{ \int_{{3h}/{2}}^{+\infty} \left[\zeta\left(-f\left(t,x-\frac{h}{2}\right)\right)-\zeta\left(-f\left(t,x+\frac{h}{2}\right)\right) \right] \left[ 
 f\left(t,x+\frac{h}{2}\right) - f\left(t,x-\frac{h}{2}\right) \right] \dx  \\
& + \int_h^{2h} \zeta(-f(t,x))(-f(t,x))\dx \bigg\}. \\  
\end{split}
\end{equation*}
From the properties of $\zeta$, $(\zeta(x)-\zeta(y))(x-y)\geq 0$ for all $(x,y)\in\R^2$ and $x\, \zeta(x)\geq 0$ for all $x\in\R$. Furthermore, from Proposition \ref{prop:betaconv}, we know that $\beta(t)\geq 0$ for all $t \in [0,T]$.
Hence $G'(t)\geq 0$ for all $t\in [0,T]$. We deduce that $G(t)=0$ for almost all $t\in [0,T]$, which implies that $f(t,x)\geq 0$ for almost all $(t,x)\in [0,T]\times [h,+\infty)$.
Now referring to the dynamics \eqref{eq:dynnodebt}, for all $t\in [0,T]$ and $x\in [0,h)$: 
$$
\partial_t f(t,x) = \frac{\eta}{3}\beta(t) f(t,x+h) \geq 0
$$ 
since $f(t,x)\geq 0$ for almost all $x\geq h$.
Hence, we also have $f(t,x)\geq 0$ for almost all $(t,x)\in [0,T]\times [0,h)$.
\end{proof}

We now prove the well-posedness of \eqref{eq:dynnodebt}.
\begin{theorem}
Let $\fin\in L^1(\R^+)$ such that $\Vert \fin\Vert_{L^1(\R^+)}=\rho$, and $\fin\geq 0$. Let $T>0$.
The Cauchy problem \eqref{eq:dynnodebt} has a unique solution $f\in C([0,T];L^1(\R^+)) $.
\end{theorem}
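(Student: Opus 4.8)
The plan is to treat \eqref{eq:dynnodebt} as a \emph{linear} equation in $L^1(\R^+)$ whose only nonlinearity sits in the scalar coefficient $\beta(t)=\int_h^{+\infty}f(t,x_*)\dx_*$, and to solve it by a Banach fixed point argument in $C([0,T];L^1(\R^+))$. First I would introduce the bounded linear operator $L$ on $L^1(\R^+)$ defined by $(Lg)(x)=\mathds{1}_{x\geq 2h}\,g(x-h)+g(x+h)-2\,\mathds{1}_{x\geq h}\,g(x)$ (the indicator $\mathds{1}_{x\geq 0}$ being superfluous on $\R^+$); since each translation is an isometry of $L^1(\R^+)$, one has $\Lnormp{Lg}\leq 4\Lnormp{g}$. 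With this notation \eqref{eq:dynnodebt} reads $\partial_t f=\tfrac{\eta}{3}\beta(t)\,Lf$, and I would pass to the equivalent mild (Duhamel) formulation
\[
f(t)=\fin+\frac{\eta}{3}\int_0^t\beta_f(s)\,Lf(s)\ds,\qquad \beta_f(s)=\int_h^{+\infty}f(s,x)\dx,
\]
which defines a map $\mathcal{T}$ on $C([0,T];L^1(\R^+))$ whose fixed points are exactly the solutions, and for which continuity in time is automatic.

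The heart of the construction is a bilinear estimate showing that $\mathcal{T}$ is a contraction on a ball $B_R=\{f:\sup_{[0,T_0]}\Lnormp{f(t,\cdot)}\leq R\}$ for $T_0$ small. Splitting $\beta_{f_1}Lf_1-\beta_{f_2}Lf_2=\beta_{f_1}\,L(f_1-f_2)+(\beta_{f_1}-\beta_{f_2})\,Lf_2$ and using $|\beta_f(s)|\leq\Lnormp{f(s,\cdot)}$, the Lipschitz bound $|\beta_{f_1}(s)-\beta_{f_2}(s)|\leq\Lnormp{(f_1-f_2)(s,\cdot)}$, and $\Lnormp{Lg}\leq 4\Lnormp{g}$, I would bound the $L^1$-norm of the integrand by $8R\,\Lnormp{(f_1-f_2)(s,\cdot)}$. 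Integrating in time yields a Lipschitz constant of order $\tfrac{8\eta R}{3}\,T_0$, which is $<1$ for $T_0$ small; a similar direct estimate shows $\mathcal{T}$ maps $B_R$ into itself once $R\geq 2\Lnormp{\fin}$ and $T_0$ is small. Banach's theorem then provides a unique fixed point on $[0,T_0]$, i.e. a unique local solution.

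To reach an arbitrary $T>0$ I would globalize by means of an a priori bound. Integrating \eqref{eq:dynnodebt} in $x$ shows that the three contributions of $Lf$ each integrate to $\beta(t)$, so $\tfrac{\mathrm{d}}{\mathrm{d}t}\int_0^{+\infty}f\dx=0$; combined with the non-negativity from Proposition~\ref{prop:positive2}, this gives $\Lnormp{f(t,\cdot)}=\rho$ for all $t$. Hence the local time-step $T_0$ can be chosen to depend only on $\rho$ (through $R=2\rho$), and the solution extends by repeating the construction on $[T_0,2T_0],[2T_0,3T_0],\dots$, covering $[0,T]$ in finitely many steps. Uniqueness follows from the same bilinear estimate: two solutions both obey $\Lnormp{f(t,\cdot)}=\rho$, so their difference satisfies a Gronwall inequality with integrable kernel and must vanish. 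The step I expect to be the main obstacle is precisely this globalization: the nonlinearity $\beta_f\,Lf$ is quadratic and only locally Lipschitz, so the contraction by itself gives only short-time existence, and the extension to all of $[0,T]$ genuinely relies on the uniform mass bound $\Lnormp{f(t,\cdot)}=\rho$ being available throughout, which in turn depends on the positivity established in Proposition~\ref{prop:positive2}.
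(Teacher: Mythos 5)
Your proposal is correct and follows essentially the same route as the paper: both rewrite \eqref{eq:dynnodebt} in integral (Duhamel) form, establish contractivity via the identical bilinear splitting $\beta_{f_1}Lf_1-\beta_{f_2}Lf_2=\beta_{f_1}L(f_1-f_2)+(\beta_{f_1}-\beta_{f_2})Lf_2$ with the same Lipschitz constant $\tfrac{8\eta\rho}{3}T_0$, and then extend to all of $[0,T]$ by iterating the local argument using conservation of mass and the positivity from Proposition~\ref{prop:positive2}. The only cosmetic difference is that you contract on a ball $B_R$ and invoke the mass bound a posteriori for globalization, whereas the paper contracts directly on the invariant set of non-negative mass-$\rho$ densities.
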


\begin{proof}
We transform the Cauchy problem \eqref{eq:dynnodebt} in integral form.

Let
$$
A(f)(t,x) = \frac{\eta}{3}\int_0^t \int_h^\infty f(s,x_*)\dx_* \left(\mathds{1}_{x\geq 2 h}  f(s,x-h) 
+ \mathds{1}_{x\geq 0} f(s,x+h)
- \mathds{1}_{x\geq h} 2 f(s,x) \right) \ds
+\fin(x), 
$$
such that for all $t\in\R_+$, for all $x\in\R_+$, $f(t,x) = A(f)(t,x)$.
We denote by $F$ the set: 
$$
F = \{ f\in C([0,T];L^1(\R^+)) \text{ s.t. }\forall t\in [0,T], \Lnormp{f(t,\cdot)}=\rho, \; f\geq 0 \}.
$$
From Proposition \ref{prop:positive2}, it is clear that $A$ maps $F$ into $F$.

We now prove that $A$ is a contraction in $F$, for a suitable $T>0$. We define the operator $P(f)(t,x) = \mathds{1}_{x\geq 2 h}  f(t,x-h) 
+ \mathds{1}_{x\geq 0} f(t,x+h)
- \mathds{1}_{x\geq h} 2 f(t,x)$.
Let $(f_1, f_2)\in F\times F$.  
\begin{equation*}
\begin{split}
& \|A(f_1)-A(f_2)\|_{C([0,T]\times L^1(\R^+))} = \\
= &  \max\limits_{[0,T]} \frac{\eta}{3} \int_{\R^+} \bigg | \int_0^t \int_h^\infty f_1(s,x_*)\dx_* P(f_1)(s,x) \ds - \int_0^t \int_h^\infty f_2(s,x_*)\dx_* 
P(f_2)(s,x) \ds \bigg | \dx \\
 \leq & \frac{\eta}{3} \int_0^T\int_{\R^+} \bigg | \left(\int_h^\infty f_1(s,x_*)\dx_*\right) 
 P(f_1)(s,x) - \left( \int_h^\infty f_2(s,x_*)\dx_*\right) P(f_2)(s,x)  \bigg | \dx \ds \\
 \leq & \frac{\eta}{3} \int_0^T \int_h^\infty f_1(s,x_*)\dx_*
 \int_{\R^+} \left\vert P(f_1)(s,x)-P(f_2)(s,x)\right \vert \dx \\
 & +  \left\vert \int_h^\infty \left( f_1(s,x_*)-f_2(s,x_*)\right )\dx_* \int_{\R^+} P(f_2)(s,x)\dx  \right \vert \ds \\
\end{split}
\end{equation*}
Notice that
\begin{equation*}
\begin{split}
& \int_{\R^+} |P(f_1)(s,x)-P(f_2)(s,x)| \dx \leq \\
\leq & \int_{\R^+} | \mathds{1}_{x\geq 2 h}  (f_1-f_2)(s,x-h) |\dx 
+ \int_{\R^+} | \mathds{1}_{x\geq 0} (f_1-f_2)(s,x+h)|\dx
+ \int_{\R^+} | \mathds{1}_{x\geq h} 2( f_1-f_2)(s,x)|\dx \\
\leq & 4 \Lnormp{(f_1-f_2)(s,\cdot)}.
\end{split}
\end{equation*}
Furthermore, for $f\in F$, for all $s\in [0,T]$,
$$
\int_{\R^+} |P(f)(s,x)|\dx \leq 4\rho.
$$
Hence we obtain: 
\begin{equation*}
\begin{split}
\|A(f_1)-A(f_2)\|_{C([0,T]\times L^1(\R^+))} &\leq \frac{\eta}{3} \int_0^T (4 \rho \Lnormp{(f_1-f_2)(s,\cdot)}  +  4 \rho \Lnormp{(f_1-f_2)(s,\cdot)} ) \ds \\[10pt]
&\leq 8\frac{\eta}{3}\rho T \|f_1-f_2\|_{C([0,T]\times L^1(\R^+))}.
\end{split}
\end{equation*}
Therefore, for $T'<{3}/({8\eta\rho})$, Banach's fixed point theorem guarantees the existence of a unique fixed point, and hence a solution to the Cauchy problem \eqref{eq:dynnodebt} in $C([0,T']\times L^1(\R^+))$. A simple bootstrap argument guarantees the existence and uniqueness for all $T\geq 0$.
\end{proof}

The non-negativity of $f$ allows us to prove that $\beta(t)$ is non-increasing.
\begin{prop}\label{prop:beta1}
The function $t\mapsto\beta(t)$ satisfies:
$$
\ddt\beta(t)\leq 0.
$$
\end{prop}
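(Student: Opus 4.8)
The plan is to differentiate the definition $\beta(t)=\int_h^{+\infty} f(t,x)\dx$ under the integral sign and substitute the dynamics \eqref{eq:dynnodebt}, then show that the resulting expression is non-positive. Concretely, I would first write
\begin{equation*}
\ddt\beta(t) = \int_h^{+\infty}\partial_t f(t,x)\dx = \frac{\eta}{3}\beta(t)\int_h^{+\infty}\left(\mathds{1}_{x\geq 2h} f(t,x-h)+\mathds{1}_{x\geq 0} f(t,x+h)-\mathds{1}_{x\geq h} 2 f(t,x)\right)\dx,
\end{equation*}
using that on the domain of integration $x\geq h$ the indicator $\mathds{1}_{x\geq 0}$ is identically $1$, the indicator $\mathds{1}_{x\geq h}$ is identically $1$, and $\mathds{1}_{x\geq 2h}$ restricts the first term to $x\geq 2h$.

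Next I would evaluate each of the three integrals by a change of variables to align them on a common domain. For the first term, substituting $y=x-h$ gives $\int_{2h}^{+\infty} f(t,x-h)\dx = \int_h^{+\infty} f(t,y)\dy = \beta(t)$. For the second term, substituting $y=x+h$ gives $\int_h^{+\infty} f(t,x+h)\dx = \int_{2h}^{+\infty} f(t,y)\dy = \beta_2(t)$, in the notation $\beta_k(t)=\int_{kh}^{+\infty} f(t,x)\dx$ introduced in Proposition \ref{prop:betaconv}. The third term is simply $2\int_h^{+\infty} f(t,x)\dx = 2\beta(t)$. Combining these, I would obtain the clean identity
\begin{equation*}
\ddt\beta(t) = \frac{\eta}{3}\beta(t)\left(\beta(t)+\beta_2(t)-2\beta(t)\right) = -\frac{\eta}{3}\beta(t)\left(\beta(t)-\beta_2(t)\right),
\end{equation*}
which is exactly the relation for $\ddt\beta_1$ recorded in \eqref{eq:betaconv} during the proof of Proposition \ref{prop:betaconv}.

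To conclude non-positivity, I would invoke two facts already available. First, $\beta(t)=\beta_1(t)\geq 0$ for all $t$ by Proposition \ref{prop:betaconv}. Second, since $f(t,\cdot)\geq 0$ for almost every $x$ by Proposition \ref{prop:positive2}, and the domain $[2h,+\infty)$ is contained in $[h,+\infty)$, the difference $\beta(t)-\beta_2(t) = \int_h^{2h} f(t,x)\dx\geq 0$ is the integral of a non-negative function over $[h,2h)$. Therefore each of the two factors $\beta(t)$ and $\beta(t)-\beta_2(t)$ is non-negative, so their product carries a minus sign and $\ddt\beta(t)\leq 0$.

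The only real subtlety is justifying the differentiation under the integral sign, which relies on the regularity $f\in C([0,T];L^1(\R^+))$ guaranteed by the well-posedness theorem together with the boundedness of the integrand furnished by mass conservation; this is a routine dominated-convergence argument rather than a genuine obstacle. Everything else is bookkeeping: the main point is simply recognizing that the monotonicity of $\beta$ is an immediate consequence of the sign structure $\ddt\beta=-\frac{\eta}{3}\beta(\beta-\beta_2)$ combined with the non-negativity of $\beta$ and of the mass $\int_h^{2h} f\dx$ carried on the interval $[h,2h)$.
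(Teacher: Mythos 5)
Your proposal is correct and follows essentially the same route as the paper: integrate the dynamics over $[h,+\infty)$, shift variables to obtain $\ddt\beta = -\frac{\eta}{3}\beta(t)\int_h^{2h}f(t,x)\dx$, and conclude from the non-negativity of $\beta$ and of $f$. You merely make explicit the citations to Propositions \ref{prop:betaconv} and \ref{prop:positive2} and the justification for differentiating under the integral, which the paper leaves implicit.
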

\begin{proof}
Integrating \eqref{eq:dynnodebt} between $h$ and $\infty$, we get: 
$$
\ddt \beta(t) = \frac{\eta}{3}\beta(t) ( \beta(t)+\int_{2h}^\infty f(t,x)\dx  - 2\beta(t)) = - \frac{\eta}{3} \beta(t)  \int_h^{2h}f(t,x)\dx \leq 0.
$$
\end{proof}

We now show that $\beta(t)$ tends to zero at infinity, which amounts to saying that at infinity, all the mass $\rho$ gets concentrated on the interval $[0,h]$.
\begin{prop}\label{prop:beta2}
The function $t\mapsto\beta(t)$ satisfies 
$$
\lim_{t\rightarrow\infty} \beta(t)= 0.
$$
\end{prop}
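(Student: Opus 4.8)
The plan is to exploit the monotonicity of $\beta$ together with a discrete dissipation identity for the tail masses. By Proposition~\ref{prop:beta1} the map $t\mapsto\beta(t)=\beta_1(t)$ is non-increasing and, by Proposition~\ref{prop:betaconv}, it is non-negative; hence it converges to some limit $\beta_\infty\geq0$, and the whole task reduces to ruling out $\beta_\infty>0$. To this end I would work with the family of tail masses $\beta_k(t)=\int_{kh}^{+\infty}f(t,x)\dx$ already introduced, which (by non-negativity of $f$, Proposition~\ref{prop:positive2}) form a non-increasing sequence in $k$ and satisfy the chain of ODEs
\begin{equation*}
\Ddt\beta_1=-\frac{\eta}{3}\beta_1(\beta_1-\beta_2),\qquad \Ddt\beta_k=\frac{\eta}{3}\beta_1(\beta_{k-1}+\beta_{k+1}-2\beta_k)\quad(k\geq2)
\end{equation*}
derived in Proposition~\ref{prop:betaconv}. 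Using conservation of the first moment I would record the uniform a priori bound $\sum_{k\geq1}\beta_k(t)=\int_0^{+\infty}\lfloor x/h\rfloor f(t,x)\dx\leq\frac1h\int_0^{+\infty} x\fin(x)\dx=:\frac{m}{h}$, which, since $(\beta_k)_k$ is non-increasing, yields $k\,\beta_k(t)\leq m/h$, i.e. $\beta_k(t)\leq m/(hk)$ uniformly in $t$.

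The key step is a Lyapunov identity for $E(t):=\tfrac12\sum_{k\geq1}\beta_k(t)^2$. Differentiating term by term and performing a discrete summation by parts against the chain above (the boundary term at $k=1$ combining exactly with the special first equation), I expect to obtain
\begin{equation*}
\Ddt E(t)=-\frac{\eta}{3}\,\beta_1(t)\sum_{j\geq1}\bigl(\beta_j(t)-\beta_{j+1}(t)\bigr)^2\leq0.
\end{equation*}
Since $E$ is bounded below by $0$, and $E(0)<\infty$ thanks to $\sum_k\beta_k(0)^2\leq\rho\sum_k\beta_k(0)<\infty$, integrating in time gives the finite total dissipation
\begin{equation*}
\int_0^{+\infty}\beta_1(t)\sum_{j\geq1}\bigl(\beta_j(t)-\beta_{j+1}(t)\bigr)^2\dt<\infty.
\end{equation*}

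Finally I would argue by contradiction: assume $\beta_\infty>0$, so that $\beta_1(t)\geq\beta_\infty$ for all $t$. Choosing a fixed integer $N$ with $m/(hN)<\beta_\infty/2$, the uniform bound $\beta_N(t)\leq m/(hN)$ forces $\beta_1(t)-\beta_N(t)\geq\beta_\infty/2$ for every $t$; writing $\beta_1-\beta_N=\sum_{j=1}^{N-1}(\beta_j-\beta_{j+1})$ and applying Cauchy--Schwarz gives $\sum_{j\geq1}(\beta_j-\beta_{j+1})^2\geq(\beta_\infty/2)^2/(N-1)$ for all $t$. Combined with $\beta_1\geq\beta_\infty$, the integrand in the dissipation estimate is then bounded below by the strictly positive constant $\eta\beta_\infty(\beta_\infty/2)^2/\bigl(3(N-1)\bigr)$, so the integral diverges, contradicting its finiteness; hence $\beta_\infty=0$. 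The main obstacle I anticipate is the rigorous justification of the term-by-term differentiation and summation by parts for the infinite series defining $E$; this should follow from the uniform decay $\beta_k\leq m/(hk)$ together with the domination $\sum_k\beta_k|\Ddt\beta_k|\lesssim\eta\rho\,\beta_1^2<\infty$, which provides a summable bound legitimizing the interchange. A secondary point is that the argument uses finiteness of the (conserved) first moment of $\fin$, which I would either assume outright or recover through a truncation and approximation argument.
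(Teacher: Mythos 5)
Your argument is correct, but it follows a genuinely different route from the paper's. Both proofs start from the same two structural ingredients --- the hierarchy of ODEs for the tail masses $\beta_k$ from Proposition~\ref{prop:betaconv} and the uniform bound $\sum_{k\geq 1}\beta_k(t)\leq \frac{1}{h}\Lnormp{x\fin}$ coming from the conserved first moment (whose finiteness is an implicit hypothesis in both arguments) --- and both argue by contradiction from $\beta_\infty:=\lim_{t\to\infty}\beta_1(t)>0$. The mechanisms of the contradiction differ. The paper propagates the limit through the hierarchy: using the $C^\infty$ regularity of the $\beta_k$ it deduces $\Ddt\beta_1\to 0$ (a Barbalat-type step), hence $\beta_2\to c$, and then by induction $\beta_k\to c$ for every $k$, which contradicts the summability of $(\beta_k)_k$. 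You instead introduce the Lyapunov functional $E=\tfrac12\sum_k\beta_k^2$ and derive, by discrete summation by parts, the dissipation identity $E'=-\frac{\eta}{3}\beta_1\sum_{j\geq 1}(\beta_j-\beta_{j+1})^2$; the algebra checks out (the contribution of the special $k=1$ equation combines with $\beta_2(\beta_1-\beta_2)$ to give $-(\beta_1-\beta_2)^2$, and the boundary term at infinity vanishes since $\beta_N^2\to 0$). Finite total dissipation then clashes with the uniform lower bound $\sum_j(\beta_j-\beta_{j+1})^2\geq (\beta_\infty/2)^2/(N-1)$ obtained from $\beta_N\leq m/(hN)$ and Cauchy--Schwarz. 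Your version is more quantitative (the dissipation inequality could in principle be turned into a decay rate) and avoids both the induction over $k$ and the Barbalat step; the price is the term-by-term differentiation of an infinite series, which you correctly flag and which is indeed legitimized by the domination $\sum_k\beta_k|\Ddt\beta_k|\leq \frac{4\eta}{3}\rho^2\,m/h$ together with the uniformly small tails furnished by $\beta_k\leq m/(hk)$. Both proofs are complete at essentially the same level of rigor.
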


\begin{proof}
Consider the functions $\beta_k$ satisfying \eqref{eq:betaconv}.
Since $f$ is differentiable with respect to time, $\beta_k\in C^1(\R^+)$ for all $k\in\N$.

By a bootstrap argument, we show that if for all $k\in\N$, $\beta_k\in C^N(\R^+)$, \textcolor{black}{$N\in\N$,} then from \eqref{eq:betaconv}, for all $k\in\N$, 
${\mathrm{d}}\beta_k/{\mathrm{d}t} \in C^N(\R^+)$ so $\beta_k\in C^{N+1}(\R^+)$. Hence $\beta_k\in C^\infty(\R^+)$ for all $k\in\N$.
Consider the sum
\begin{equation}\label{eq:sumbetak}
\Ddt\sum_{i=1}^k\beta_i = \frac{\eta}{3}\beta_1(\beta_{k+1}-\beta_{k}).
\end{equation}
We have 
\textcolor{black}{
$$
\sum_{i=1}^k\beta_i(t) = \sum_{i=1}^{k-1} i \int_{ih}^{(i+1)h} f(t,x)\dx + k\int_{kh}^{+\infty} f(t,x)\dx \leq \frac{1}{h}\sum_{i=1}^{k-1} \int_{ih}^{(i+1)h} x f(t,x)\dx +  \frac{1}{h} \int_{kh}^{+\infty} x f(t,x)\dx. 
$$
So for all $k\in\N$,
$$
\sum_{i=1}^k\beta_i(t) \leq \frac{1}{h} \int_{h}^{+\infty} x f(t,x) dx.
$$
}
Then 
$$
\lim_{k\rightarrow\infty}\sum_{i=1}^k\beta_i(t) \leq \frac{1}{h}\int_h^{\infty}xf(t,x)\dx \leq \frac{1}{h}\Lnormp{x\fin}.
$$ 
From Proposition \ref{prop:beta1}, since $\beta_1$ is \textcolor{black}{non-increasing} and bounded below by 0, there exists 
$$
c:=\lim_{t\rightarrow\infty}\beta_1(t).
$$
Suppose that $c>0$. Since $\beta_1\in C^2(\R^+)$, 
$$
\lim_{t\rightarrow\infty} \Ddt\beta_1= 0 = \lim_{t\rightarrow\infty}\left[-\frac{\eta}{3}\beta_1(t)(\beta_{1}(t)-\beta_{2}(t)) \right] = -\frac{\eta}{3} c (c-\lim_{t\rightarrow\infty}\beta_2(t)),
$$
hence 
$$
\lim_{t\rightarrow\infty}\beta_2(t)=c.
$$
Then from Equation \eqref{eq:sumbetak}, we deduce \textcolor{black}{by induction} that 
$$
\lim_{t\rightarrow\infty}\beta_k(t)=c
\text{ for all } k\in\N.
$$
This contradicts the fact that $\lim_{k\rightarrow\infty}\sum_{i=1}^k\beta_i(t)$ is finite, being bounded above by $\Lnormp{x\fin}/h$. Hence $c=0$, so 
$$
\lim_{t\rightarrow\infty} \beta(t)= 0.
$$
\end{proof}

In Proposition \ref{prop:beta1}, we showed that if $f$ is a solution to \eqref{eq:dynnodebt}, then $t\mapsto \beta(t) = \Lnormp{f(t,\cdot)\mathds{1}_{x\geq h}}$ is non-increasing. Additionally, we now show that $\Linormp{f(t,\cdot)\mathds{1}_{x\geq h}}$ is non-increasing. 

\begin{prop}\label{prop:linorm}
Let $\fin\in L^\infty(\R^+)$. Consider the solution $f$ to the Cauchy problem \eqref{eq:dynnodebt}, restricted to the interval $[h,+\infty)$, \textit{i.e.} $f\mathds{1}_{x\geq h}$.
Then $\|f \mathds{1}_{x\geq h}\|_{L^\infty([0,T]\times \R^+)} \leq \|\fin\mathds{1}_{x\geq h}\|_{L^\infty(\R^+)}$.
\end{prop}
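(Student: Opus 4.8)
The plan is to run the Stampacchia truncation argument of Lemma~\ref{lemma:Linf}, but now on the half-line $[h,+\infty)$ rather than on all of $\R$. Set
$$
K:=\Linormp{\fin\mathds{1}_{x\geq h}}=\operatorname*{ess\,sup}_{x\geq h}\fin(x),
$$
take the same smoothed positive part $\zeta$ as in Lemma~\ref{lemma:Linf}, and define
$$
G:t\mapsto \int_h^{+\infty}\int_0^{f(t,x)-K}\zeta(s)\ds\dx .
$$
Since $f(0,\cdot)=\fin\leq K$ for a.e. $x\geq h$ and $\zeta$ vanishes on $(-\infty,0]$, we have $G(0)=0$, and $G(t)\geq 0$ for all $t$. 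The goal is to prove $G'(t)\leq 0$: combined with $G(0)=0$ and $G\geq 0$ this forces $G\equiv 0$, and since the inner integrand is nonnegative it vanishes a.e., giving $f(t,x)\leq K$ for a.e. $(t,x)\in[0,T]\times[h,+\infty)$, which is the claim.

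Differentiating and inserting \eqref{eq:dynnodebt} restricted to $x\geq h$ yields
$$
G'(t)=\frac{\eta}{3}\beta(t)\int_h^{+\infty}\zeta\big(f(t,x)-K\big)\Big[f(t,x+h)+\mathds{1}_{x\geq 2h}f(t,x-h)-2f(t,x)\Big]\dx .
$$
Exactly as in Proposition~\ref{prop:positive2}, I would split the discrete Laplacian as a forward difference $f(x+h)-f(x)$ on $[h,+\infty)$, a backward difference $f(x-h)-f(x)$ on $[2h,+\infty)$, and a boundary remainder $-f(x)$ on $[h,2h)$. After the translations $x\mapsto x\mp h/2$, the first two pieces combine into a single integral over $[3h/2,+\infty)$,
$$
-\int_{{3h}/{2}}^{+\infty}\big[\zeta(a)-\zeta(b)\big]\,(a-b)\dx,\qquad a:=f\!\left(t,x+\tfrac{h}{2}\right)-K,\quad b:=f\!\left(t,x-\tfrac{h}{2}\right)-K,
$$
whose integrand is nonpositive because $\zeta$ is nondecreasing, so that $(\zeta(a)-\zeta(b))(a-b)\geq 0$.

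The extra boundary contribution $-\int_h^{2h}\zeta(f(t,x)-K)\,f(t,x)\dx$ is the only genuinely new feature compared with Lemma~\ref{lemma:Linf}, and it is precisely where the half-line geometry enters: the discrete Laplacian no longer telescopes cleanly at the endpoint $x=h$. I expect this to be the main point to get right. It is handled by invoking the positivity of the solution established in Proposition~\ref{prop:positive2}: since $f\geq 0$ and $\zeta\geq 0$, this term is nonpositive. Together with $\beta(t)\geq 0$ from Proposition~\ref{prop:betaconv}, both contributions to $G'(t)$ are nonpositive, whence $G'(t)\leq 0$ and the conclusion follows as above. The only regularity caveat is the justification of differentiation under the integral sign and of the translations, which proceeds as in the earlier propositions from $f\in C([0,T];L^1(\R^+))$ and the structure of \eqref{eq:dynnodebt}.
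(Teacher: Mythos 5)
Your proposal is correct and follows essentially the same route as the paper: Stampacchia truncation with the same functional $G(t)=\int_h^{+\infty}\int_0^{f(t,x)-K}\zeta(s)\ds\dx$, the same half-step symmetrization producing the nonpositive term $-\int_{3h/2}^{+\infty}[\zeta(a)-\zeta(b)](a-b)\dx$, and the same residual boundary term $-\int_h^{2h}\zeta(f(t,x)-K)f(t,x)\dx$. The only (harmless) difference is bookkeeping: you isolate the remainder $-f(x)$ on $[h,2h)$ directly as in Proposition~\ref{prop:positive2}, whereas the paper keeps $f(t,x-h)$ over all of $[h,+\infty)$ and recombines after the change of variables, and for the sign of the boundary term the paper uses $\zeta(s-K)s\geq 0$ for all $s\in\R$ (valid since $K\geq 0$) rather than invoking the positivity of $f$, though your appeal to Proposition~\ref{prop:positive2} is equally legitimate.
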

\begin{proof}
We use again the function $\zeta$ defined in Lemma \ref{lemma:Linf} and define the function 

\textcolor{black}{
$$
G:t\mapsto \int_h^{+\infty} \int_0^{f(t,x)-K} \zeta(s)\ds \dx,
$$
}
where $K:=\Linorm{\fin\mathds{1}_{x\geq h} }$.
Then $G$ again satisfies: $G\in C^1((0,T],\R)$, $G(0)=0$ and $G(t)\geq 0$ for all $s\in [0,T]$. Differentiating it gives:
\begin{equation*}
\begin{split}
G'(t) =& \frac{\eta}{3}\beta(t) \int_h^{+\infty} \zeta(f(t,x)-K) ( f(t,x+h) + f(t,x-h)\mathds{1}_{x\geq 2h}  -2 f(t,x)) \dx \\
 = & \frac{\eta}{3}\beta(t) \bigg[ \int_h^{+\infty} \zeta(f(t,x)-K) ( f(t,x+h) - f(t,x) ) \dx \\
 & - \int_h^{+\infty} \zeta(f(t,x)-K) ( f(t,x) - f(t,x-h) ) \dx
  -\int_h^{2h} \zeta(f(t,x)-K) f(t,x-h) \dx \bigg] \\ 
 = & \frac{\eta}{3}\beta(t) \bigg[ \int_{{3h}/{2}}^{+\infty} \zeta (f(t,x-\frac{h}{2})-K ) \left( f(t,x+\frac{h}{2}) - 
 f(t,x-\frac{h}{2}) \right) \dx \\
 & - \int_{\frac{h}{2}}^{+\infty} \zeta (f(t,x+\frac{h}{2})-K ) \left( f(t,x+\frac{h}{2}) - f(t,x-\frac{h}{2}) \right) 
 \dx \\
& -\int_h^{2h} \zeta(f(t,x)-K) f(t,x-h) \dx \bigg] \\ 
  = & \frac{\eta}{3}\beta(t) \bigg\{ -\int_{\frac{3h}{2}}^{+\infty} \left[\zeta (f(t,x+\frac{h}{2})-K )-\zeta (f(t,x-\frac{h}{2})-K )\right]
  \left[ f(t,x+\frac{h}{2}) - f(t,x-\frac{h}{2}) \right] \dx \\
 & - \int_{\frac{h}{2}}^{\frac{3h}{2}} \zeta (f(t,x+\frac{h}{2})-K ) \left( f(t,x+\frac{h}{2}) - f(t,x-\frac{h}{2}) \right) \dx -
 \int_h^{2h} \zeta(f(t,x)-K) f(t,x-h) \dx \bigg\} .
\end{split}
\end{equation*}
The first term is negative since $\zeta$ is increasing function. After a change of variable, the sum of the last two integrals amounts to
$$
- \int_h^{2h} \zeta(f(t,x)-K) f(t,x) \dx 
$$
and again, due to the properties of $\zeta$, for all $s\in\R$, $\zeta(s-K)s\geq 0$. Hence $G'(t)\leq 0$ from which we deduce that $G(t) = 0$ for all $t\in [0,T]$, and the result follows.
\end{proof}

\subsection{The quasi-invariant limit}
\label{subsec:quasi-inv}

We now show that the limit when $h$ tends to zero of the solution to System \eqref{eq:dynnodebt}, in the diffusive scaling, satisfies a nonlinear
diffusion equation, with diffusion rate depending on the mass of the solution.

\begin{theorem}\label{th:existence}
Let $\fin\in L^1(\R^+) \cap L^\infty(\R^+)$ and let $T>0$.
We denote by $\fe \in C([0,T]; L^1(\R^+)) $ the unique solution to the rescaled Cauchy problem 
\begin{equation}\label{eq:systeps}
\begin{cases}
\displaystyle
\partial_t \fe(t,x) = \frac{\eta}{3\,\varepsilon^2}\left(\int_\varepsilon^{+\infty}  \fe(t,x_*)\dx_*\right)
\left[\mathds{1}_{x\geq 2\varepsilon}  \fe(t,x-\varepsilon) 
+ \mathds{1}_{x\geq 0} \fe(t,x+\varepsilon)
- \mathds{1}_{x\geq \varepsilon} 2 \fe(t,x) \right]\\[13pt]
\fe(0,x) = \fin(x).
\end{cases}
\end{equation}
Then there exists a subsequence $(\fe)_{\varepsilon>0}$ that converges weakly to 
$\tf \in \mathcal{M}([0,T]\times\R^+)$ and
$$
\tf(t,x) = f^+(t,x) + \left(\Lnormp{\fin} -\int_{\R^+} f^+(t,x) \dx \right) \delta_0(x) 
$$
where $f^+$ solves the following Cauchy problem:
\begin{equation}\label{eq:systlim}
\begin{cases}
\displaystyle \partial_t f^{\textcolor{black}{+}}(t,x) = \frac{\eta}{3}\left(\int_0^{+\infty}  f^{\textcolor{black}{+}}(t,x_*)\dx_*\right) \partial_x^2 
f^{\textcolor{black}{+}}(t,x) \\[10pt]
f^{\textcolor{black}{+}}(t,0) = 0 \quad & \textrm{ for a.e. } t\in \R^+ \\ 
f^{\textcolor{black}{+}}(0,x) = \fin(x) \quad &\textrm{ for a.e. } x\in \R^+.
\end{cases}
\end{equation}
\end{theorem}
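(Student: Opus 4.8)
The plan is to follow the strategy announced in the introduction: split $\fe$ into a non-concentrating ``tail'' and a part that collapses onto the origin, obtain compactness for each piece separately, and only then pass to the limit in the weak formulation. \textbf{First}, I would collect the uniform-in-$\varepsilon$ estimates. Mass is conserved, so $\Lnormp{\fe(t,\cdot)}=\rho$; the first moment is conserved as well, which gives uniform tightness at infinity since $\int_R^{+\infty}\fe(t,x)\dx\leq R^{-1}\Lnormp{x\fin}$. The rescaled analogue of Proposition \ref{prop:linorm} yields $\|\fe\mathds{1}_{x\geq\varepsilon}\|_{L^\infty([0,T]\times\R^+)}\leq\Linormp{\fin}$, and the rescaled analogue of Proposition \ref{prop:beta1} gives that $\beta_\varepsilon(t):=\int_\varepsilon^{+\infty}\fe(t,x)\dx$ is non-increasing in $t$ (the $\varepsilon^{-2}$ prefactor does not affect these sign-based arguments).

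\textbf{Second}, I would extract the limits. Writing $\fep:=\fe\mathds{1}_{x\geq\varepsilon}$, the $L^1\cap L^\infty$ bound together with tightness makes $(\fep)$ equi-integrable, so Dunford--Pettis provides a subsequence with $\fep\rightharpoonup f^+$ weakly in $L^1([0,T]\times\R^+)$, with $f^+\in L^\infty$. The complementary piece $\fe\mathds{1}_{[0,\varepsilon)}$ is supported on a shrinking neighbourhood of $0$ and therefore concentrates: its weak-$*$ limit as a measure is $c(t)\delta_0$. Since $(\fe)$ is bounded and tight in $\mathcal M([0,T]\times\R^+)$, the full sequence converges weakly-$*$ to $\tf=f^++c(t)\delta_0$. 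To identify $c$, I would test the tightness estimate against $\phi(t)$ times a cutoff in $x$: this shows $\beta_\varepsilon\rightharpoonup\int_{\R^+}f^+(t,\cdot)\dx$ weakly in $L^1([0,T])$, while monotonicity of $\beta_\varepsilon$ and Helly's theorem give a pointwise limit; the two must agree, so $\beta^+(t):=\lim_\varepsilon\beta_\varepsilon(t)=\int_{\R^+}f^+(t,x)\dx$ and consequently $c(t)=\rho-\int_{\R^+}f^+(t,x)\dx$, as in the statement.

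\textbf{Third}, I would pass to the limit in the weak form of \eqref{eq:systeps}. The key algebraic observation is that, after the substitutions $x\mapsto x\pm\varepsilon$, the three indicator terms recombine cleanly over $\{x\geq\varepsilon\}$, so that the interaction term equals $\frac{\eta}{3\varepsilon^2}\int_0^T\beta_\varepsilon(t)\int_\varepsilon^{+\infty}\fe(t,x)\big[\varphi(t,x+\varepsilon)+\varphi(t,x-\varepsilon)-2\varphi(t,x)\big]\dx\dt$ with no leftover boundary contribution. A second-order Taylor expansion replaces the bracket by $\varepsilon^2\partial_x^2\varphi+O(\varepsilon^4)$, the remainder being controlled by $\rho^2 T\varepsilon^2\|\partial_x^4\varphi\|_\infty$. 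The delicate point is the product $\beta_\varepsilon(t)\fep(t,x)$, in which both factors depend on $\varepsilon$ while only weak convergence is available for $\fep$; here the monotonicity of the scalar $\beta_\varepsilon$ is decisive, since it upgrades its convergence to strong convergence in every $L^p([0,T])$ (dominated convergence from the pointwise Helly limit), and a strong-times-weak pairing then yields $\frac{\eta}{3}\int_0^T\beta^+(t)\int_{\R^+}f^+\partial_x^2\varphi\dx\dt$. The time-derivative and initial terms pass to the limit directly against the measure $\tf$.

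\textbf{Finally}, I would read off \eqref{eq:systlim}. Testing with $\varphi$ vanishing at $x=0$ kills the Dirac contributions and produces exactly the bulk equation $\partial_t f^+=\frac{\eta}{3}\beta^+\partial_x^2 f^+$ with $\beta^+=\int_{\R^+}f^+\dx$, a heat equation with bounded time-dependent coefficient; its parabolic smoothing gives $f^+$ enough interior regularity to possess boundary traces. Allowing general test functions and integrating by parts, the surviving boundary terms must vanish for every $\varphi$: the coefficient of $\partial_x\varphi(t,0)$, namely $-\frac{\eta}{3}\beta^+(t)f^+(t,0)$, forces the Dirichlet condition $f^+(t,0)=0$ (using $\beta^+>0$ from Proposition \ref{prop:betaconv}), whereas the coefficient of $\varphi(t,0)$ merely relates $\dot c$ to the boundary flux $\partial_x f^+(t,0)$ and is a consistency relation not needed for the statement. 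I expect this last extraction of the boundary condition to be the main obstacle, precisely because it requires controlling the trace of $f^+$ at the origin and matching it against the concentrating Dirac mass, rather than any of the interior compactness, which the uniform $L^\infty$ tail bound renders routine.
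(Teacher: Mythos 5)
Your proposal is correct and follows essentially the same route as the paper: the same decomposition $\fe=\fep+\fem$ with $\fep=\fe\mathds{1}_{x>\varepsilon}$, the same uniform bounds from the analogues of Propositions \ref{prop:beta1} and \ref{prop:linorm}, Dunford--Pettis for the tail, concentration of $\fem$ onto $c(t)\delta_0$ with $c(t)=\Lnormp{\fin}-\int_{\R^+}\fp(t,x)\dx$, and a second-order Taylor expansion of the test function in the interaction term. The differences are in execution rather than strategy, and two of them are worth recording. First, you keep the weak formulation for the full $\fe$ and observe that the three indicator terms recombine exactly over $\{x\geq\varepsilon\}$; this is correct (the $\mathds{1}_{x\geq 0}\fe(t,x+\varepsilon)$ term shifts to $\int_\varepsilon^{+\infty}\fe(t,y)\varphi(t,y-\varepsilon)\,\mathrm{d}y$ with nothing left over), whereas the paper first writes a truncated equation for $\fep$ alone and consequently must carry and estimate a defect integral $\int_\varepsilon^{2\varepsilon}\fep(t,x)\varphi(t,x-\varepsilon)\dx$; your bookkeeping is cleaner, at the mild price of having to restrict to test functions with $\varphi(t,0)=0$ at the end to remove the Dirac contribution from the $\partial_t\varphi$ term. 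Second, for the weak-times-weak product $\beta_\varepsilon(t)\fep(t,x)$ you invoke Helly's selection theorem on the monotone, uniformly bounded family $\beta_\varepsilon$ to get pointwise and hence strong $L^p([0,T])$ convergence, and then pair strong with weak; the paper instead extracts, for each fixed $t$, a weakly convergent subsequence of $\fep(t,\cdot)$ in $L^1(\R^+)$ and identifies its limit with $\fp(t,\cdot)$ before applying dominated convergence in $t$. Your Helly argument is arguably the more robust of the two, since it avoids any issue of $t$-dependent subsequences. You are also right to add tightness at infinity via the conserved first moment before invoking Dunford--Pettis on the unbounded domain; the paper passes over this point. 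Finally, note that your anticipated ``main obstacle'' --- extracting the Dirichlet condition from boundary terms after integration by parts --- is not where the paper spends effort: its solution concept is the very weak formulation \eqref{eq:veryweaksol}, which is exactly the limit identity you obtain and contains no boundary terms, and the trace condition $\fp(t,0)=0$ is simply asserted to pass to the limit from $\fep(t,0)=0$. Your trace argument would in fact be needed to make that assertion rigorous, but it is not part of the paper's proof.
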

The solution of equation \eqref{eq:systlim} is to be taken in the very weak sense, as defined below.

\begin{definition}
\label{def:veryweak}
A measurable function $f^{\textcolor{black}{+}}\in L^1([0,T]\times\R^+)$ is said to be a very weak solution of equation \eqref{eq:systlim} if it satisfies
\begin{equation}\label{eq:veryweaksol}
\int_0^T \!\! \int_{\R^+}\!\! f^{+}(t,x) \partial_t\varphi(t,x) \dx \dt + \frac{\eta}{3} \int_0^T \!\!
\int_{\R^+} \!\! f^{+}(t,x_*) \dx_* \int_{\R^+} \!\! 
f^{+}(t,x) \partial_x^2 \varphi(t,x) \dx \dt + \int_{\R^+} \!\!\fin(x)\varphi(0,x) \dx=0
\end{equation}
for all $\varphi\in C^1([0,T];C^2(\R^+))\cap L^\infty([0,T]\times\R^+)$, such that
$\varphi(T,x)=0$ for all $x\in\R^+$.
\end{definition}


\begin{proof}
For all $\varepsilon\in\R^+$, we denote by $\fe$ the unique solution to the Cauchy problem \eqref{eq:systeps}.
Let $\fep := \indp\fe$ and $\fem := \indm\fe$, so that $\fe = \fep + \fem$ and $\fep$ and $\fem$ have disjoint supports, respectively $(\varepsilon,+\infty)$ and $[0,\varepsilon]$.
It is easy to check that $\fep$ satisfies weakly  
\begin{equation}\label{eq:systepsp}
\begin{cases}
\displaystyle\partial_t \fep(t,x) = \frac{1}{\varepsilon^2}\frac{\eta}{3}\left( \int_\varepsilon^{+\infty}  \fep(t,x_*)\dx_*\right)
\left(\mathds{1}_{x\geq 2\varepsilon}  \fep(t,x-\varepsilon) 
+ \mathds{1}_{x\geq \varepsilon} \fep(t,x+\varepsilon) 
- \mathds{1}_{x\geq \varepsilon} 2 \fep(t,x) \right) \\[10pt]
\fep(0,x) = \fin(x)\indp.
\end{cases}
\end{equation}

Notice that
$$
\Lnormp{\fep(t,\cdot)} = \int_\varepsilon^{+\infty} \fe(t,x) \dx.
$$
From Proposition \ref{prop:beta1}, we know that $(\fep)_{\varepsilon>0}$ is bounded in $L^1([0,T]\times\R^+)$, with
$$
\int_0^T \int_0^\infty |\fep(t,x)| \dx \dt \leq T \Lnormp{\fin}.
$$
Furthermore, by Proposition \ref{prop:linorm}, $(\fep)_{\varepsilon>0}$ is also bounded in $L^\infty([0,T]\times\R^+)$ with 
$$
\Linormtp{\fep} \leq \Linormp{\fin}.
$$
This implies in particular that  $(\fep)_{\varepsilon>0}$ is equi-integrable. Hence by the Dunford-Pettis theorem,  $(\fep)_{\varepsilon>0}$ is relatively compact in $L^1([0,T]\times\R^+)$ with the weak topology.
It admits a weakly converging subsequence that we denote again by $(\fep)_{\varepsilon>0}$ such that  
\begin{equation}\label{eq:weakfept}
\forall \varphi\in L^\infty([0,T]\times\R^+), \quad \lim_{\varepsilon\rightarrow 0} \int_0^T \int_{\R^+} \fep(t,x) \varphi(t,x) \dx \dt = \int_0^T \int_{\R^+} \fp(t,x) \varphi(t,x) \dx \dt 
\end{equation}
with $\fp\in L^1([0,T]\times \R^+)$.

We can show in a similar way that for all $t\in [0,T]$, the sequence $(\fep(t,\cdot))_{\varepsilon>0}$ is bounded in $L^1(\R^+)$ and it is equi-integrable, so it admits a weakly converging subsequence that we denote still by $(\fep(t,\cdot))_{\varepsilon>0}$ such that 
\begin{equation}\label{eq:weakfep}
\forall \psi\in L^\infty(\R^+), \quad \lim_{\varepsilon\rightarrow 0} \int_{\R^+} \fep(t,x) \psi(x) \dx = \int_{\R^+} f_t^+(x) \psi(x) \dx 
\end{equation}
for $f_t^+\in L^1(\R^+)$.

We show that for all $t\in [0,T]$, $f_t^+ = \fp(t,\cdot)$. Let $t\in [0,T]$.  Let $\varphi\in L^\infty([0,T]\times\R^+)$ and consider the sequence 
$$ \left(\int_{\R^+} \fep(t,x) \varphi(t,x) \dx \right)_{\varepsilon>0}.
$$ 
From \eqref{eq:weakfep}, it converges to 
$$
\int_{\R^+} f_t^+(x) \varphi(t,x) \dx.
$$
Furthermore, 
$$
\left| \int_{\R^+} \fep(t,x) \varphi(t,x) \dx \right| \leq \Linormtp{\varphi} \Lnormp{\fin}.
$$
By dominated convergence, 
$$
\lim_{\varepsilon\rightarrow 0} \int_0^T \left( \int_{\R^+} \fep(t,x) \varphi(t,x) \dx \right) \dt
=  \int_0^T \lim_{\varepsilon\rightarrow 0}  \left( \int_{\R^+} \fep(t,x) \varphi(t,x) \dx \right) \dt.
$$
Hence from \eqref{eq:weakfept} and \eqref{eq:weakfep}, for all $t\in [0,T]$, $f_t^+ = \fp(t,\cdot)$.
In particular, taking $(t,x)\mapsto \varphi(t,x) \equiv 1$, we have the weak convergence of $\Lnormp{\fep}$, with 
\begin{equation}\label{eq:limfepl1}
\lim_{\varepsilon\rightarrow 0} \int_{\R^+} \fep(t,x) \dx = \int_{\R^+} \fp(t,x) \dx.
\end{equation}

Let us now show that $\fp$ satisfies weakly \eqref{eq:systlim}. Since $\fep$ satisfies weakly \eqref{eq:systepsp}, for all
test functions
$\varphi\in C^1([0,T];C^2(\R^+))\cap L^\infty([0,T]\times\R^+)$ such that for all $x\in\R^+$, $\varphi(T,x)=0$,
\begin{equation*}
\begin{split}
&-\int_0^T \int_{\R^+} \fep(t,x) \partial_t\varphi(t,x) \dx \dt - \int_{\R^+} \fin(x) \varphi(0,x) \dx \\
=  &\frac{\eta}{3}\frac{1}{\varepsilon^2}
 \int_0^T \left( \int_{\R^+} \fep(t,x_*) \dx_* \right) \int_{\R^+} \left(\mathds{1}_{x> 2\varepsilon}  \fep(t,x-\varepsilon) 
+ \mathds{1}_{x> \varepsilon} \fep(t,x+\varepsilon) 
- \mathds{1}_{x> \varepsilon} 2 \fep(t,x) \right) \varphi(t,x) \dx \dt. 
\end{split}
\end{equation*}
Notice that 
\begin{equation*}
\begin{split}
& \int_{\R^+} \left(\mathds{1}_{x> 2\varepsilon}  \fep(t,x-\varepsilon) 
+ \mathds{1}_{x> \varepsilon} \fep(t,x+\varepsilon) 
- \mathds{1}_{x> \varepsilon} 2 \fep(t,x) \right) \varphi(t,x) \dx  \\
= &  \int_{2\varepsilon}^{+\infty} \fep(t,x-\varepsilon) \varphi(t,x) \dx + \int_{\varepsilon}^{+\infty} \fep(t,x+\varepsilon) \varphi(t,x) \dx 
-2  \int_{\varepsilon}^{+\infty} \fep(t,x)\varphi(t,x) \dx  \\
= &  \int_{\varepsilon}^{+\infty} \fep(t,x) \varphi(t,x+\varepsilon) \dx + \int_{2\varepsilon}^{+\infty} \fep(t,x) \varphi(t,x-\varepsilon) \dx 
-2  \int_{\varepsilon}^{+\infty} \fep(t,x)\varphi(t,x) \dx  \\
= & \int_{\varepsilon}^{+\infty} \fep(t,x) \left( \varphi(t,x-\varepsilon) 
+ \varphi(t,x+\varepsilon) - 2 \varphi(t,x) \right) \dx -  \int_{\varepsilon}^{2\varepsilon} \fep(t,x)\varphi(t,x-\varepsilon) \dx \\
= & \ \varepsilon^2 \int_{\varepsilon}^{+\infty} \fep(t,x) \left( \partial_x^2 \varphi(t,x) + \mathcal{O}(\textcolor{black}{\varepsilon^2}) \right) \dx -  \int_{\varepsilon}^{2\varepsilon} \fep(t,x)\varphi(t,x-\varepsilon) \dx .
\end{split}
\end{equation*}
So $\fep$ satisfies, for all $\varphi\in C^1([0,T];C^2(\R^+))\cap L^\infty([0,T]\times\R^+)$ 
such that $\varphi(T,x)=0$ for all $x\in\R^+$:
\begin{equation}\label{eq:fep}
\begin{split}
&-\int_0^T \int_{\R^+} \fep(t,x) \partial_t\varphi(t,x) \dx \dt - \int_{\R^+} \fin(x) \varphi(0,x) \dx \\
=  &\frac{\eta}{3}
 \int_0^T \left( \int_{\R^+} \fep(t,x_*) \dx_* \right) \left( \int_{\varepsilon}^{+\infty} \fep(t,x)  \left( \partial_x^2 \varphi(t,x) +
 \mathcal{O}(\textcolor{black}{\varepsilon^2}) \right) \dx -  \int_{\varepsilon}^{2\varepsilon} \fep(t,x)\varphi(t,x-\varepsilon) \dx \right) \dt .
\end{split}
\end{equation}
We study the limit of each of these terms. 
Firstly, from \eqref{eq:weakfept}, since $\partial_t\varphi \in L^\infty([0,T]\times\R^+)$,
\begin{equation}\label{eq:limterm1}
\lim_{\varepsilon\rightarrow 0} \int_0^T \int_{\R^+} \fep(t,x) \partial_t\varphi(t,x) \dx \dt = \int_0^T \int_{\R^+} \fp(t,x) \partial_t\varphi(t,x) \dx \dt .
\end{equation}
Secondly, 
\begin{equation}\label{eq:limterm2}
\begin{split}
\left| \int_0^T \left( \int_{\R^+} \fep(t,x_*) \dx_* \right) \int_{\varepsilon}^{+\infty} \fep(t,x)  \mathcal{O}(\textcolor{black}{\varepsilon^2})\dx \dt \right| & = \mathcal{O}(\textcolor{black}{\varepsilon^2})\int_0^T \left( \int_{\R^+} \fep(t,x_*) \dx_* \right)^2 \dt \\
& \leq
\mathcal{O}(\textcolor{black}{\varepsilon^2}) \Vert{\fin}\Vert^2_{L^1(\R^+)} T \xrightarrow[\varepsilon \to 0]{} 0 .
\end{split}
\end{equation}
For the last term, we write: 
\begin{equation}\label{eq:limterm3}
\begin{split}
& \left| \int_0^T \left( \int_{\R^+} \fep(t,x_*) \dx_* \right)  \int_{\varepsilon}^{2\varepsilon} \fep(t,x)\varphi(t,x-\varepsilon) \dx \dt \right| \\
 \leq & \ \varepsilon  \int_0^T \textcolor{black}{\Lnormp{\fin}} \Linormp{\varphi(t,\cdot)}
\textcolor{black}{ \Linormp{\fin}}
\leq \varepsilon T \Lnormp{\fin} \Linormtp{\varphi} \textcolor{black}{ \Linormp{\fin}} \xrightarrow[\varepsilon \to 0]{} 0 .
\end{split}
\end{equation}
Lastly, we look at the \textcolor{black}{nonlinear} term.
From 
\textcolor{black}{\eqref{eq:limfepl1},}
we have 
\begin{equation}\label{eq:convbeta}
\int_{\R^+} \fep(t,x_*) \dx_* \xrightarrow[\varepsilon \to 0]{} \int_{\R^+} \fp(t,x_*) \dx_*
\end{equation}
\textcolor{black}{pointwise for all $t\in [0,T]$} and, from \textcolor{black}{\eqref{eq:weakfept}-\eqref{eq:weakfep}},
$$\int_{\R^+} \fep(t,x) \partial_x^2 \varphi(t,x) \dx \xrightarrow[\varepsilon \to 0]{} \int_{\R^+} \fp(t,x) \partial_x^2 \varphi(t,x) \dx$$
\textcolor{black}{pointwise} for all $t\in [0,T]$.
Since both of those limits are finite, for each $t\in [0,T]$, 
$$
 \int_{\R^+} \fep(t,x_*) \dx_* \left(\int_{\R^+} \fep(t,x_*) \partial_x^2 \varphi(t,x) \dx_* \right)  \xrightarrow[\varepsilon \to 0]{} \int_{\R^+} \fp(t,x_*) \dx_* \left(\int_{\R^+} \fp(t,x) \partial_x^2 \varphi(t,x) \dx\right).
$$
Furthermore, 
$$
\left| \int_{\R^+} \fep(t,x_*) \dx_* \int_{\varepsilon}^{+\infty} \fep(t,x)   \partial_x^2 \varphi(t,x) \dx \right|
\leq \Lnormp{\fin}^2 \Linormtp{\partial_x^2\varphi} .
$$
So by dominated convergence, 
\begin{equation}\label{eq:limterm4}
\begin{split}
\lim_{\varepsilon\rightarrow 0} &\int_0^T \int_{\R^+} \fep(t,x_*) \dx_* \int_{\varepsilon}^{+\infty} \fep(t,x)   \partial_x^2 \varphi(t,x) \dx \dt \\
&= \int_0^T \left(\int_{\R^+} \fp(t,x_*) \dx_* \int_{\R^+} \fp(t,x) \partial_x^2 \varphi(t,x) \dx\right) \dt.
\end{split}
\end{equation}
In conclusion, we put together \eqref{eq:limterm1}, \eqref{eq:limterm2}, \eqref{eq:limterm3} and \eqref{eq:limterm4} and taking the limit of the weak formulation \eqref{eq:fep} we obtain that, for all $\varphi\in C^1([0,T];C^2(\R^+))\cap L^\infty([0,T]\times\R^+)$ 
such that $\varphi(T,x)=0$ for all $x\in\R^+$, 
\begin{equation}\label{eq:fp}
\begin{split}
\int_0^T &\int_{\R^+} \fp(t,x) \partial_t\varphi(t,x) \dx \dt +\int_{\R^+} \fin(x) \varphi(0,x) \dx  \\
&+\frac{\eta}{3}
 \int_0^T \left(\int_{\R^+} \fp(t,x_*) \dx_* \int_{\R^+} \fp(t,x) \partial_x^2 \varphi(t,x) \dx \right) \dt=0.
 \end{split}
\end{equation}
Briefly, 
$\fp$ satisfies weakly: 
$$
\partial_t \fp(t,x) = \frac{\eta}{3} \left(\int_0^\infty \fp(t,x_*) \dx_*\right) \partial_x^2 \fp(t,x)
$$
with initial condition $\fp(t,x)=\fin(x)$.
Furthermore, for all $\varepsilon>0$, for all $t\in\R^+$, $\fe^+(t,0)=0$, which implies:
$$
\forall t\in\R^+, \quad \fp(t,0) = \lim_{\varepsilon\rightarrow 0} \fe^+(t,0) = 0.
$$
We now examine the limit of $\fem$. Firstly, $\supp(\fem) = [0,\varepsilon] \xrightarrow[\varepsilon\rightarrow 0]{} \{0\}$ \textcolor{black}{and
$\fem\geq 0$ for all $\varepsilon>0$.}
Furthermore, the supports of $\fep$ and $\fem$ are disjoint, so for each $t\in [0,T]$,
$$
\Lnormp{\fin} = \Lnormp{\fe(t,\cdot)} = \Lnormp{\fem(t,\cdot)} + \Lnormp{\fep(t,\cdot)}.
$$
From \eqref{eq:limfepl1}, $\lim_{\varepsilon\rightarrow 0}\Lnormp{\fep(t,\cdot)} = \Lnormp{\fp(t,\cdot)}$ exists, so
$$
\lim_{\varepsilon\rightarrow 0}\Lnormp{\fem(t,\cdot)} = \Lnormp{\fin} - \Lnormp{\fp(t,\cdot)}.
$$
Hence $\fem$ converges weakly in $\mathcal{M}((0,T)\times\R^+)$
and 
$$
\lim_{\varepsilon\rightarrow 0} \fem = (\Lnormp{\fin} - \Lnormp{\fp(t,\cdot)}) \delta_0(x).
$$

To conclude, $\fep$ converges weakly in $L^1([0,T]\times\R^+)$ and $\fem$ converges weakly in $\mathcal{M}((0,T)\times\R^+)$, so their sum $\fe = \fep + \fem$ also converges weakly to a limit $\tf = \fp + f^-$, where $\fp$ is a weak solution to \eqref{eq:systlim} and $f^- = (\Lnormp{\fin} - \Lnormp{\fp(t,\cdot)}) \delta_0(x)$.
\end{proof}

\subsection{Numerical simulations}\label{Sec:num_sim}

The simulations of the behavior of the constrained model exhibit very different dynamics with respect to the situation of the non-constrained model.
In all the simulations, we have used the same initial condition as in Subsection \ref{subs/numsim1}, i.e. $\fin\equiv \mathbbm{1}_{[0,1)}$.
The time evolution of the solution, plotted for various values of the payoff $\varepsilon$,
clearly shows the existence of a sub-population composed of individuals who are not allowed to play anymore, because the value of their exchange variable is less than $\varepsilon$.

\begin{figure}[h!]
\begin{center}
\includegraphics[width=0.32\textwidth]{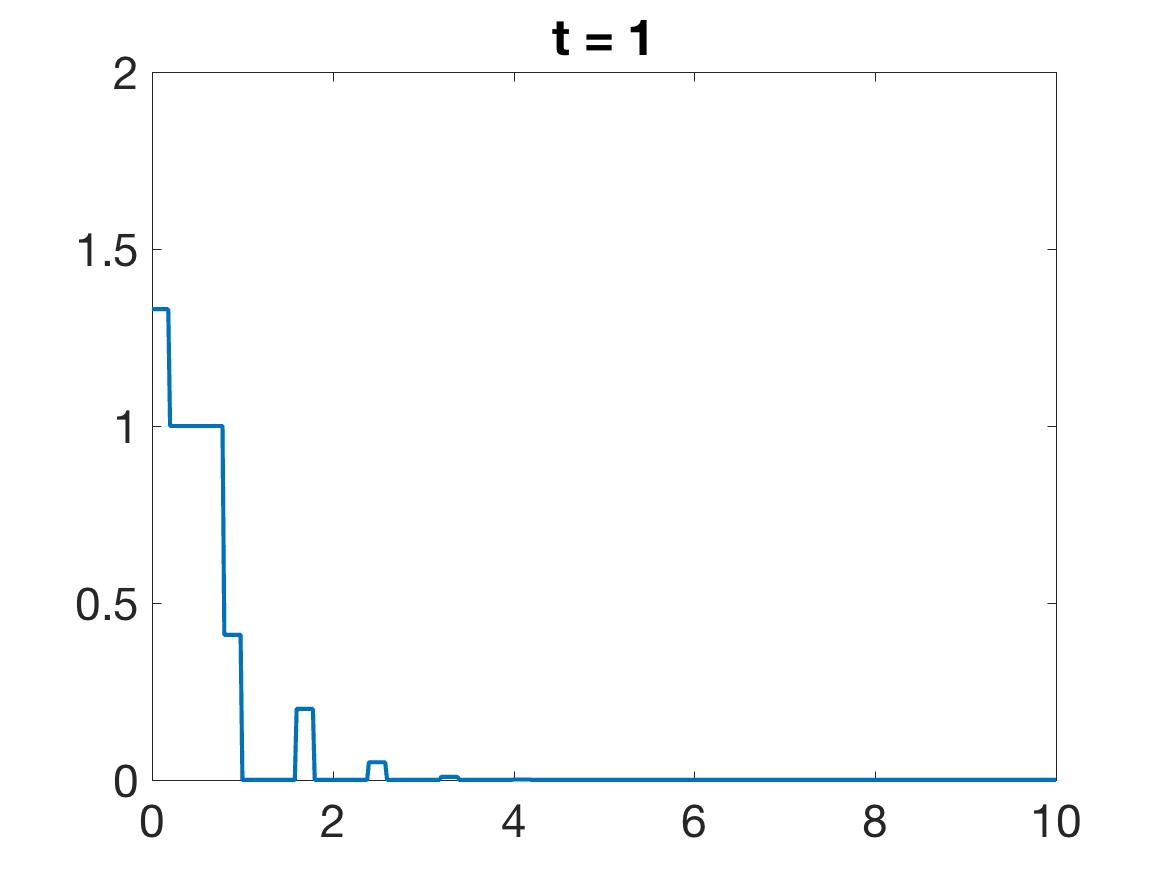}
\includegraphics[width=0.32\textwidth]{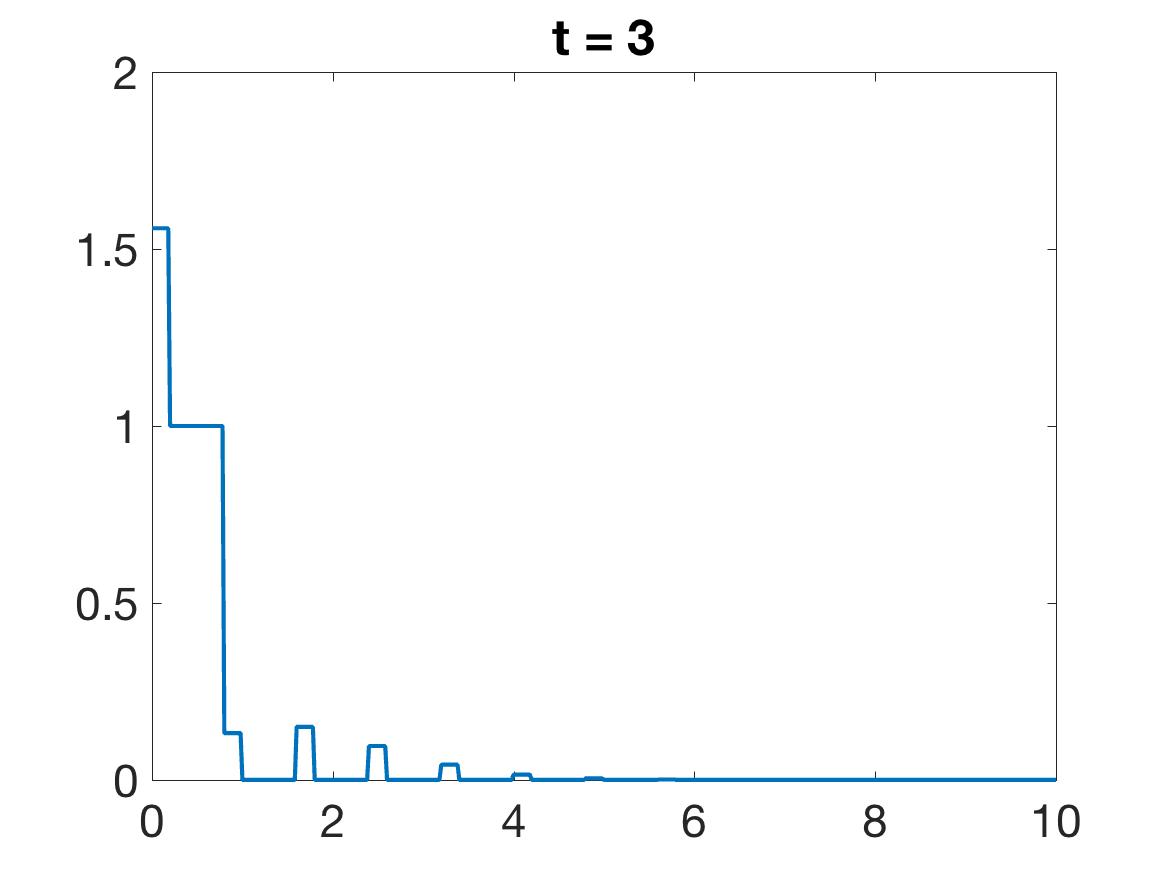}
\includegraphics[width=0.32\textwidth]{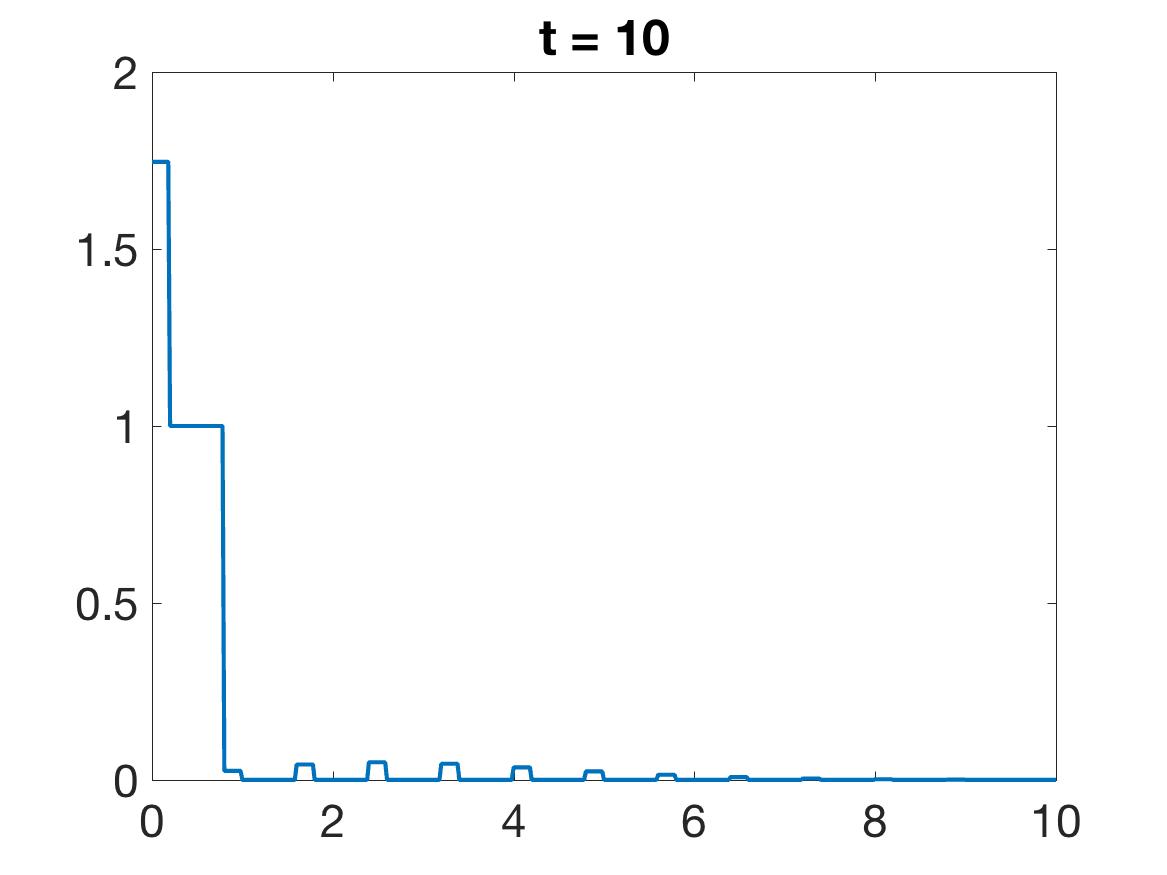}
\end{center}
\caption{Solution of the rescaled equation \eqref{eq:systeps} with $\varepsilon=0.8$ and initial data $\fin\equiv \mathbbm{1}_{[0,1)}$.}
\label{fig4}
\end{figure}

\begin{figure}[h!]
\begin{center}
\includegraphics[width=0.32\textwidth]{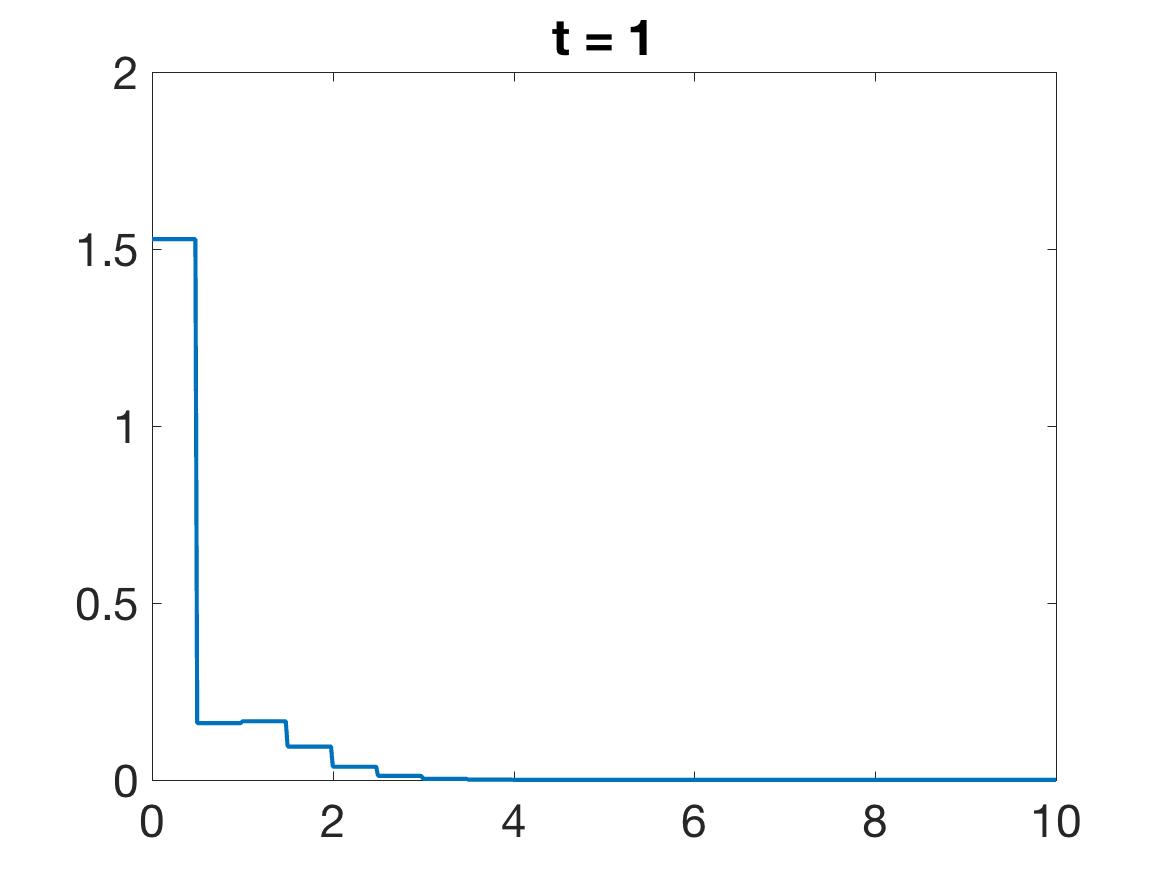}
\includegraphics[width=0.32\textwidth]{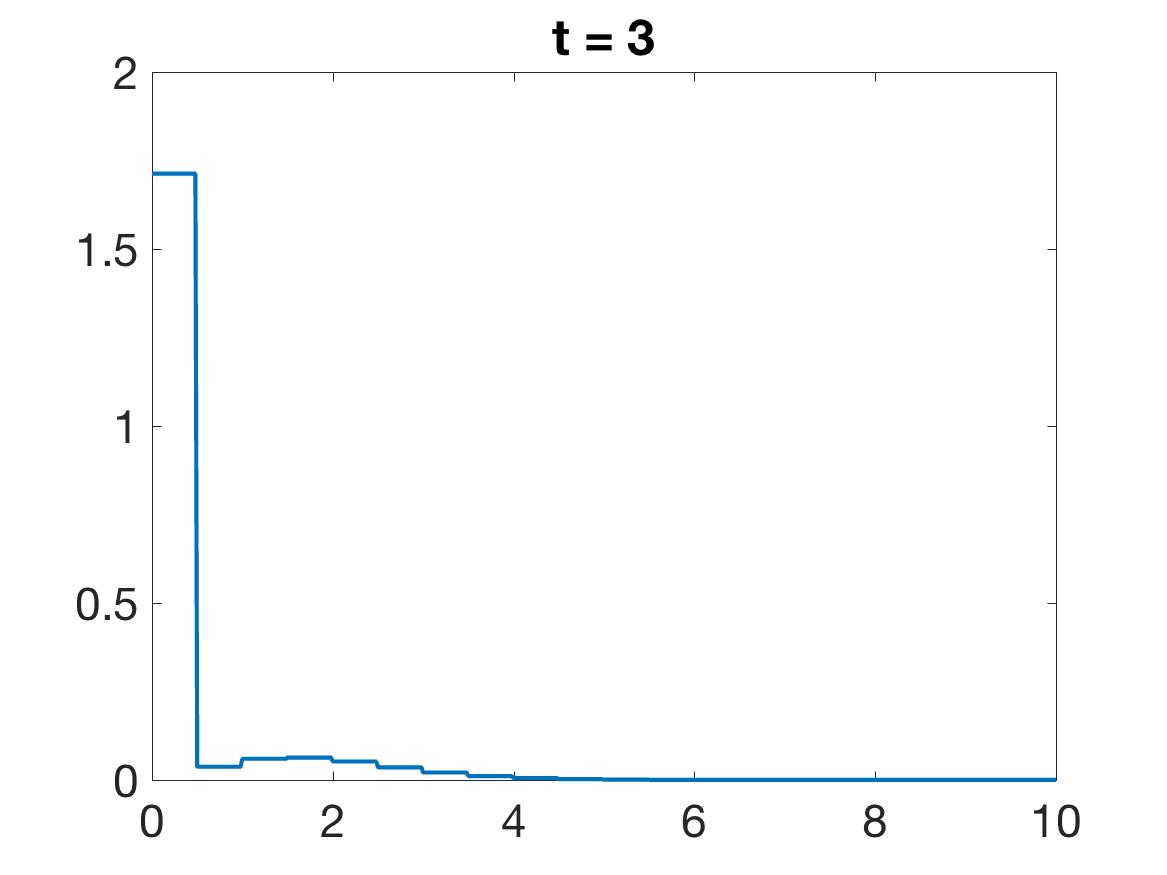}
\includegraphics[width=0.32\textwidth]{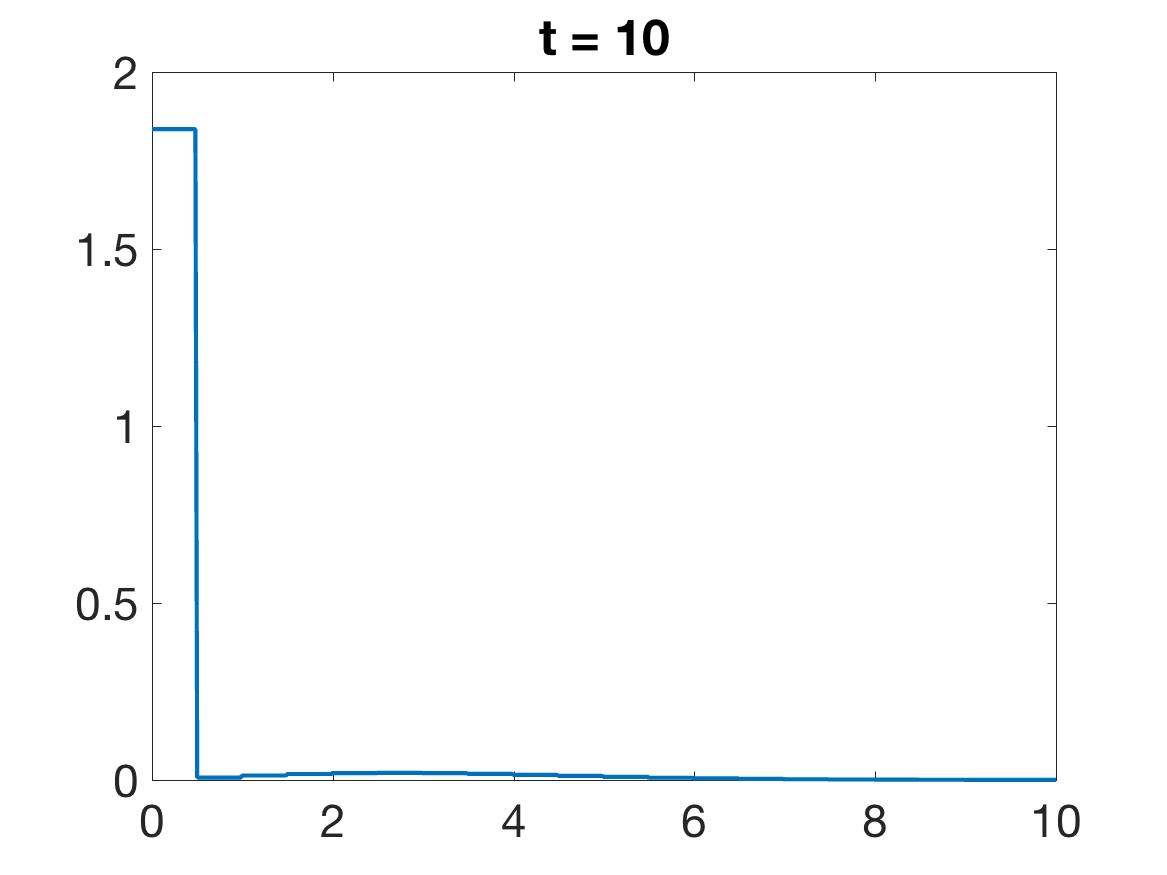}
\end{center}
\caption{Solution of the rescaled equation \eqref{eq:systeps} with $\varepsilon=0.5$ and initial data $\fin\equiv \mathbbm{1}_{[0,1)}$.}
\label{fig5}
\end{figure}

\begin{figure}[h!]
\begin{center}
\includegraphics[width=0.32\textwidth]{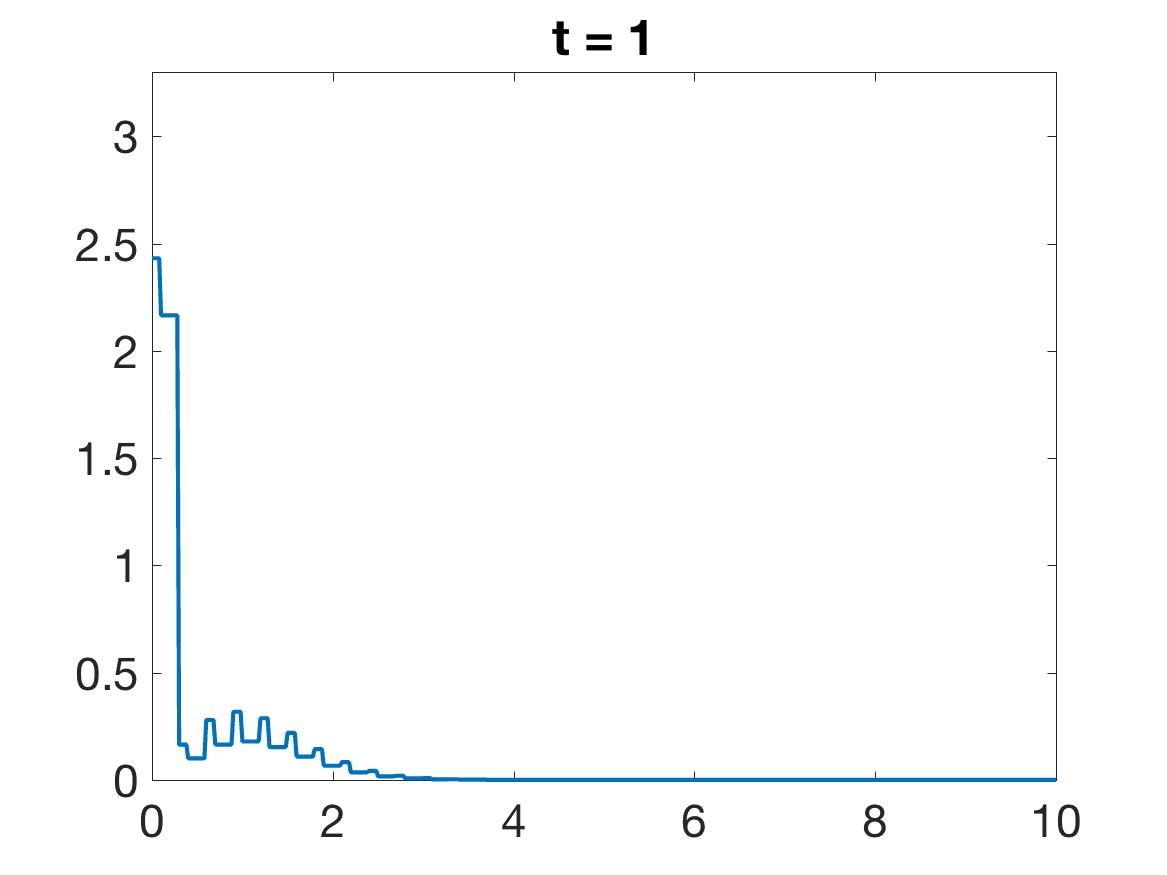}
\includegraphics[width=0.32\textwidth]{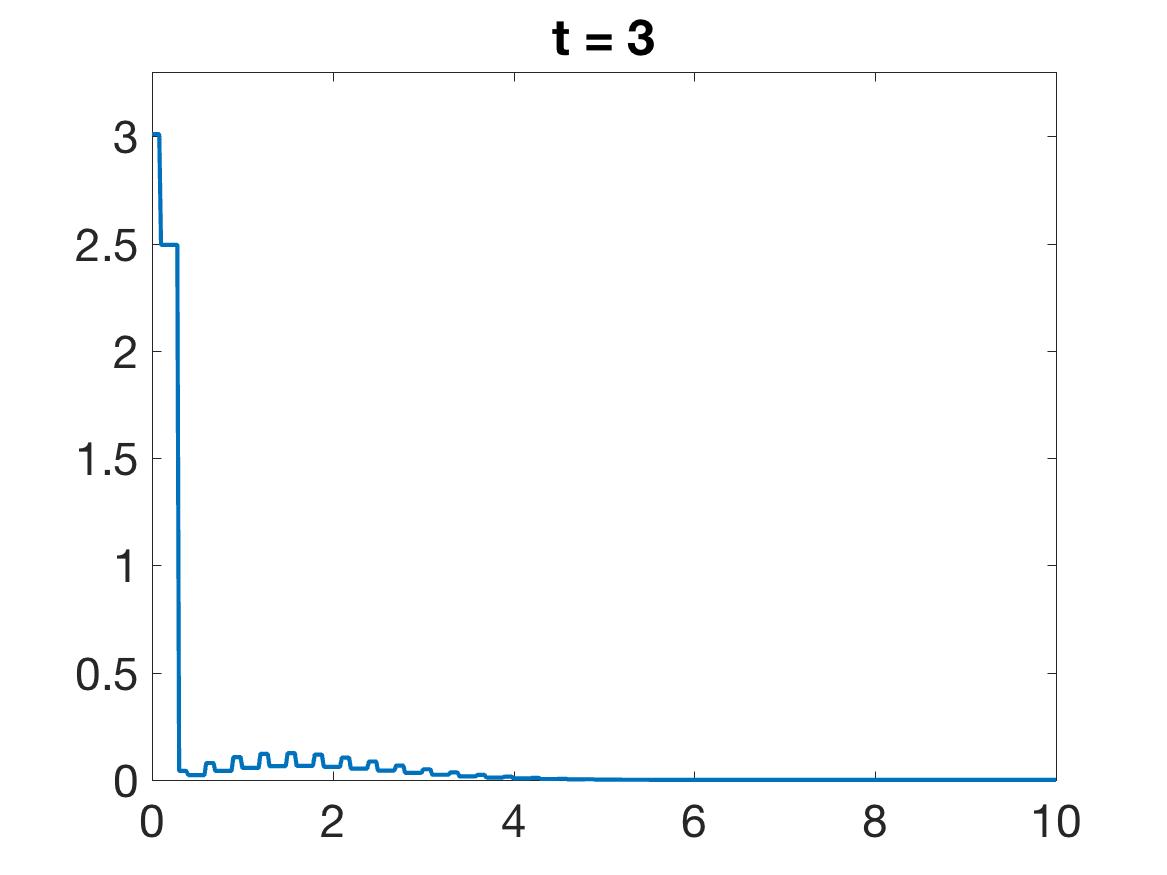}
\includegraphics[width=0.32\textwidth]{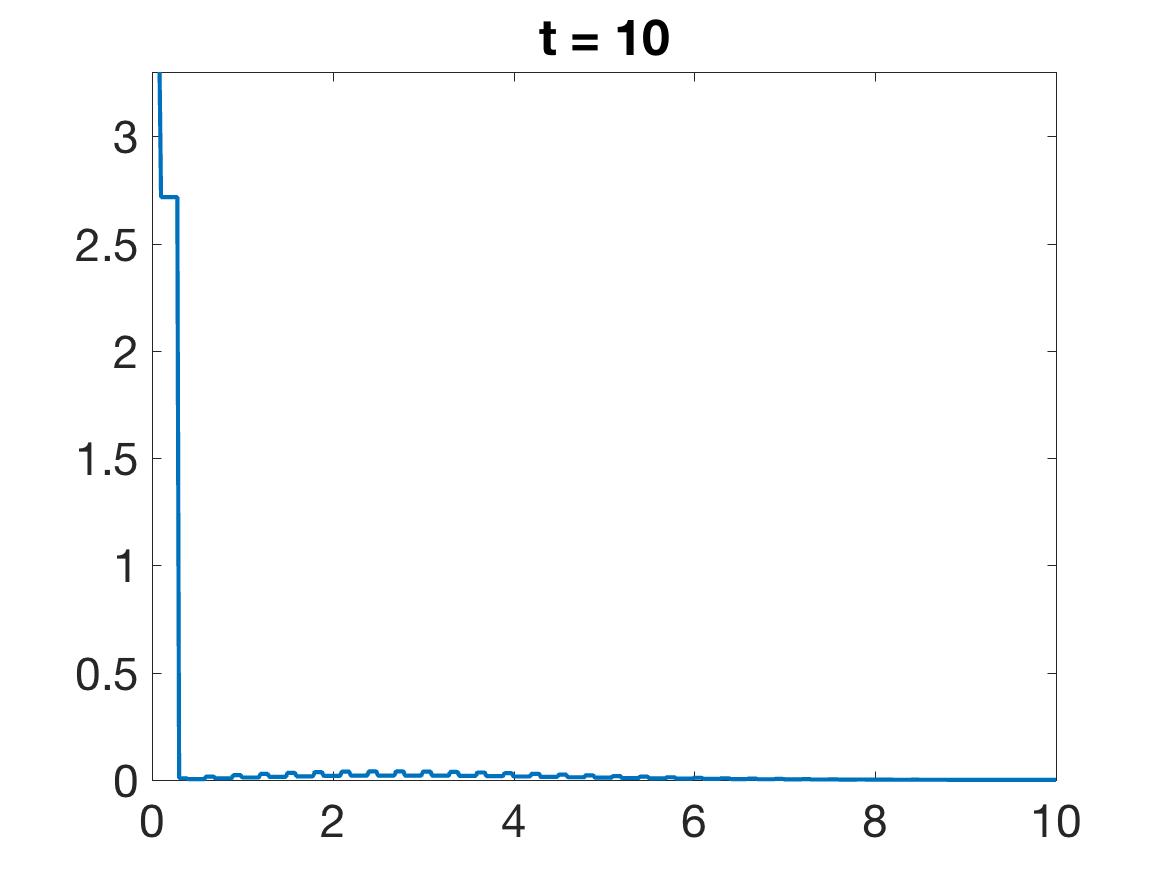}
\end{center}
\caption{Solution of the rescaled equation \eqref{eq:systeps} with $\varepsilon=0.3$ and initial data $\fin\equiv \mathbbm{1}_{[0,1)}$.}
\label{fig6}
\end{figure}

For small $\varepsilon$, the concentration effect around $x=0$ becomes more apparent \textcolor{black}{(see Figures \ref{fig4}, \ref{fig5}, \ref{fig6})}.

\section{The constrained continuous problem}
\label{Sec5}

In this section, we finally consider the continuous problem \eqref{eq:systlim} obtained in the limit $\varepsilon\rightarrow 0$.
\textcolor{black}{For the sake of simplicity, in this section we will denote the solution of \eqref{eq:systlim} simply by $f$, i.e.
we consider the problem:}
\begin{equation*}\label{eq:cont}
\begin{cases}
\displaystyle \partial_t f(t,x) = \frac{\eta}{3}\left( \int_{\R^+} f(t,x_*) \dx_*\right) \partial_x^2 f(t,x)\\[10pt]
f(t,0) = 0 \quad & \textrm{ for a.e. } t\in \R^+ \\ 
f(0,x) = \fin(x) \quad &\textrm{ for a.e. } x\in \R^+.
\end{cases}
\end{equation*}
Coherently with the original problem \eqref{eq:systeps}, we define 
$$
\bb = \int_{\R^+} f(t,x) \dx.
$$

The study of the well-posedness of the initial-boundary value problem \eqref{eq:systlim} can be easily based on the results of Subsection \ref{subsec:quasi-inv}.
Indeed, as an immediate corollary of Theorem \ref{th:existence} we see, by construction, that the solution of \eqref{eq:systlim}  exists and that,
moreover, it is a.e. non-negative if the initial condition satisfies $\fin\geq 0$ for a.e. $x\in\R^+$.

Therefore, in order to prove the well-posedness of the problem, we only need to show that the solution $f\textcolor{black}{\in L^1([0,T]\times\R^+)}$ to the limit problem \eqref{eq:systlim} is unique.
The following result holds.

\begin{theorem}\label{thm:wellposedness}
Let $\fin(x)\geq 0$ for all $x\in\R$. Then, there exists a unique non-negative very weak solution $f$ to \eqref{eq:systlim}.
\end{theorem}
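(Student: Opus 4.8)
The plan is to exploit the fact that the nonlinearity in \eqref{eq:systlim} is purely temporal: the coefficient $\tfrac{\eta}{3}\int_{\R^+}f(t,x_*)\dx_*=\tfrac{\eta}{3}\bb$ depends on $t$ alone, so the equation is a \emph{linear} heat equation on $\R^+$ with homogeneous Dirichlet datum and a time-dependent, space-homogeneous diffusion rate. I may assume $\rho=\Lnormp{\fin}>0$, the case $\rho=0$ being trivial. Let $f_1,f_2$ be two non-negative very weak solutions with the same datum $\fin$; set $\beta_i(t)=\int_{\R^+}f_i(t,x)\dx\in[0,\rho]$, $a_i=\tfrac{\eta}{3}\beta_i$, and $s_i(t)=\int_0^t a_i(r)\,\mathrm{d}r\le\tfrac{\eta}{3}\rho\,t<\infty$. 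The strategy has three steps: (i) a linear uniqueness lemma showing each $f_i$ equals one and the \emph{same} explicit formula, time-changed by $s_i$; (ii) deducing $\beta_i=M\circ s_i$ for a single explicit function $M$; (iii) closing an autonomous scalar ODE for $s_i$ and concluding $s_1\equiv s_2$.

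For step (i) I would mirror the Fourier argument of Proposition \ref{prop:tempered}. Extending $f_i$ oddly across $x=0$ (which encodes the boundary condition $f_i(t,0)=0$) yields $F_i\in L^1([0,T]\times\R)$ that solves $\partial_t F_i=a_i(t)\,\partial_x^2 F_i$ distributionally on $\R$. Taking the partial Fourier transform in $x$, $\hat F_i$ solves $\partial_t\hat F_i=-a_i(t)\,\xi^2\hat F_i$, and the integrating-factor/adjoint construction used for \eqref{eq:dyn_Fourier} shows this problem has a unique solution once the coefficient $a_i$ is frozen. Since the reflected half-line heat kernel, time-changed by $s_i$, furnishes a bona fide very weak solution with datum $\fin$, uniqueness forces
\[
f_i(t,x)=u\big(s_i(t),x\big),\qquad u(s,x):=\int_0^{+\infty}\frac{1}{\sqrt{4\pi s}}\Big(e^{-(x-y)^2/4s}-e^{-(x+y)^2/4s}\Big)\fin(y)\,\mathrm{d}y,
\]
where crucially $u$ is the \emph{same} standard Dirichlet heat solution for both $i=1,2$.

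For step (ii), integrating the kernel in $x$ gives $\int_0^{+\infty}\big(e^{-(x-y)^2/4s}-e^{-(x+y)^2/4s}\big)/\sqrt{4\pi s}\,\dx=\mathrm{erf}\!\big(y/(2\sqrt s)\big)$, whence $\beta_i(t)=M(s_i(t))$ with the single function $M(s):=\int_0^{+\infty}\mathrm{erf}\!\big(y/(2\sqrt s)\big)\fin(y)\,\mathrm{d}y$ for $s>0$ and $M(0)=\rho$. Since $\fin\ge0$ and $\rho>0$, $M$ is continuous on $[0,\infty)$ and strictly positive. Differentiating $s_i$ then gives, for both $i$, the \emph{same} autonomous Cauchy problem
\[
s_i'(t)=\tfrac{\eta}{3}\,M\big(s_i(t)\big),\qquad s_i(0)=0 .
\]
For step (iii), because $F(s):=\tfrac{\eta}{3}M(s)$ is continuous with $F(0)=\tfrac{\eta}{3}\rho>0$, the map $\sigma\mapsto\tfrac{3}{\eta}\int_0^{\sigma}\mathrm{d}r/M(r)$ is a continuous strictly increasing bijection near $0$ (its integrand being bounded there), so each $s_i$ is its inverse; hence $s_1\equiv s_2$, and no Lipschitz bound on $M$ at $s=0$ is required. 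Consequently $f_1=u(s_1(\cdot),\cdot)=u(s_2(\cdot),\cdot)=f_2$.

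The main obstacle is step (i): making the odd-extension/Fourier reduction rigorous at the level of very weak $L^1$ solutions—verifying that the odd extension is again a distributional solution, that the Dirichlet trace $f_i(t,0)=0$ is correctly captured by the reflection, and that the time-changed reflected heat kernel indeed lies in the admissible class of Definition \ref{def:veryweak}, so that the frozen-coefficient linear uniqueness genuinely pins $f_i$ to the explicit formula. Once this is established, the entire coupling of the problem collapses onto the harmless scalar ODE for $s_i$, whose uniqueness is elementary.
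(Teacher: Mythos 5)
Your route is genuinely different from the paper's, and considerably heavier. The paper never constructs a representation formula: it first tests \eqref{eq:veryweaksol} with functions $\tilde\varphi(t)=\int_t^T\psi(t')\,\mathrm{d}t'$ depending on $t$ alone, for which $\partial_x^2\tilde\varphi=0$, so that subtracting the two identities kills the nonlinear term and yields $\int_{\R^+}f_1(t,x)\dx=\int_{\R^+}f_2(t,x)\dx=:\pi(t)$ directly. This one observation is the whole point: the difference $f_1-f_2$ then satisfies a \emph{linear} very weak heat equation with the common coefficient $\frac{\eta}{3}\pi(t)$, and the proof is closed by duality --- for every $\psi^\star$ one solves the backward problem $\partial_t\varphi+\frac{\eta}{3}\pi(t)\partial_x^2\varphi=\psi^\star$, $\varphi(T,\cdot)=0$, $\varphi(\cdot,0)=0$, and inserts the solution into the weak formulation to obtain $\iint(f_1-f_2)\psi^\star=0$. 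You instead identify each $f_i$ separately with an explicit time-changed Dirichlet heat flow and compare the two time changes through an autonomous ODE; your steps (ii)--(iii) are sound (continuity and strict positivity of $M$ make the separation-of-variables argument work without any Lipschitz hypothesis), but they buy nothing that the paper's one-line mass identification does not already give.

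The genuine weak point is the one you flag yourself, and it is worse than a technicality. You assert that ``the reflected half-line heat kernel, time-changed by $s_i$, furnishes a bona fide very weak solution'' in the sense of Definition \ref{def:veryweak}. It does not: testing \eqref{eq:veryweaksol} with $x$-independent $\varphi$ forces $\int_{\R^+}f(t,x)\dx=\Lnormp{\fin}$ for a.e.\ $t$, whereas the Dirichlet flow $u(s,\cdot)$ loses mass through $x=0$ (indeed $M(s)<\rho$ for $s>0$), so your candidate is not in the admissible class and the ``two solutions of the same linear problem'' comparison cannot be set up in that form. The identification must be run in one direction only: show that the odd extension of a \emph{given} very weak solution is a distributional solution of $\partial_tF=a_i(t)\partial_x^2F$ on $\R$ (this uses only test functions vanishing at $x=0$, which are admissible), and then invoke a whole-line linear uniqueness lemma in the spirit of Proposition \ref{prop:tempered}, now with a merely bounded measurable, nonnegative, time-dependent coefficient. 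That lemma is provable, but it is precisely the step you have deferred and it carries the entire weight of your argument; the paper's duality proof needs only \emph{solvability} of the dual problem, not a uniqueness-plus-representation theorem for the forward one.
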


\begin{proof}
Suppose, by contradiction, that there exist two very weak solutions, $f_1$ and $f_2$, to \eqref{eq:systlim}.

Then, consider the test function
$$
\tilde\varphi(t):=\int_t^T \psi(t')\dt'.
$$
It is clear that
$\tilde \varphi\in C^1([0,T];C^2(\R^+))\cap L^\infty([0,T]\times\R^+)$ and $\varphi(T,x)=0$ for all $x\in\R^+$
as soon as $\psi\in C^1([0,T])$.
Then, we can conclude, from the definition of very weak solution (Definition \ref{def:veryweak}), that
$$
\int_0^T\psi(t)\left(\int_{\R^+}(f_1-f_2)\dx\right)\dx \dt =0
$$
for all $\psi\in C^1([0,T])$,
i.e.
$$
\pi(t):=\int_{\R^+}f_1\dx =\int_{\R^+}f_2\dx.
$$

Clearly, $\pi$ is uniformly bounded, and hence belongs to $L^\infty(0,T)$.

Now, we come back to the definition of very weak solution. Suppose that there exist two very weak solutions, $f_1$ and $f_2$, and consider
\eqref{eq:veryweaksol}.

Then, we have that 
$$
\int_0^T \!\! \int_{\R^+}\!( f_1(t,x)-f_2(t,x)) \partial_t\varphi(t,x) \dx \dt + \frac{\eta}{3} \int_0^T 
\pi(t) \int_{\R^+} \!\! ( f_1(t,x)-f_2(t,x)) \partial_x^2 \varphi(t,x) \dx \dt =0
$$
for all $\varphi\in C^1([0,T];C^2(\R^+))\cap L^\infty([0,T]\times\R^+)$ and such that $\varphi(T,x)=0$ for all $x\in\R^+$.

Consider, for any $\psi^\star\in C^1_c([0,T];C^2(\R^+))\cap L^\infty([0,T]\times\R^+)$, the (backward) parabolic Cauchy problem
\begin{equation*}
\begin{cases}
\displaystyle \partial_t \varphi(t,x) + \frac{\eta}{3}\pi(t) \partial_x^2 \varphi(t,x)= \psi^\star  \quad & \textrm{ for a.e. }  (t,x)\in (0,T) \times\R^+  \\[4pt]
\varphi(t=T,x) = 0 \quad & \textrm{ for a.e. } x\in \R^+\\[4pt]
\varphi(t,x=0) = 0 \quad &\textrm{ for a.e. } t\in \R^+.
\end{cases}
\end{equation*}

By standard parabolic theory it is easy to prove that, for any $\psi^\star\in C^1_c([0,T];C^2(\R^+))\cap L^\infty([0,T]\times\R^+)$, there exists a uniformly bounded solution $\varphi\in C^1([0,T];C^2(\R^+))\cap L^\infty([0,T]\times\R^+)$ to the initial-boundary value problem.
Hence, 
$$
\int_0^T \!\! \int_{\R^+}\!( f_1(t,x)-f_2(t,x)) \psi^\star(t,x) \dx \dt =0
$$
for all $\psi^\star\in C^1([0,T];C^2(\R^+))\cap L^\infty([0,T]\times\R^+)$, which guarantees uniqueness of the solution.
\end{proof}

\bigskip
\noindent
{\bf Acknowledgments:}
Work supported by the ANR projects \textit{Kimega} (ANR-14-ACHN-0030-01), \textit{MFG} (ANR-16-CE40-0015-01) and by the Italian Ministry of Education, University and Research (\textit{Dipartimenti di Eccellenza} program 2018-2022, Dipartimento di Matematica 'F. Casorati', Universit\`a degli Studi di Pavia). \textcolor{black}{The authors thank moreover the University Paris-Dauphine, which hosted most of this research.}


\bibliography{biblio}

\begin{thebibliography}{10}

\bibitem{2012PhRvE}
C.~{Adami}, J.~{Schossau}, and A.~{Hintze}.
\newblock {Evolution and stability of altruist strategies in microbial games}.
\newblock {\em Phys. Rev. E}, 85(1):011914, 2012.

\bibitem{bou-sal2}
L.~Boudin and F.~Salvarani.
\newblock The quasi-invariant limit for a kinetic model of sociological
  collective behavior.
\newblock {\em Kinet. Relat. Models}, 2(3):433--449, 2009.

\bibitem{MR2744702}
L.~Boudin and F.~Salvarani.
\newblock Modelling opinion formation by means of kinetic equations.
\newblock In {\em Mathematical modeling of collective behavior in
  socio-economic and life sciences}, Model. Simul. Sci. Eng. Technol., pages
  245--270. Birkh\"auser Boston, Inc., Boston, MA, 2010.

\bibitem{bucur2016nonlocal}
C.~Bucur and E.~Valdinoci.
\newblock {\em Nonlocal diffusion and applications}, volume~1.
\newblock Springer, 2016.

\bibitem{golse2005hydrodynamic}
F.~Golse and L.~Saint-Raymond.
\newblock Hydrodynamic limits for the boltzmann equation.
\newblock {\em Riv. Mat. Univ. Parma (7)}, 4:1--144, 2005.

\bibitem{game2}
J.~Hofbauer, K.~Sigmund, et~al.
\newblock {\em Evolutionary games and population dynamics}.
\newblock Cambridge university press, 1998.

\bibitem{2011PhRvE}
L.-L. {Jiang}, T.~{Zhou}, M.~{Perc}, and B.-H. {Wang}.
\newblock {Effects of competition on pattern formation in the
  rock-paper-scissors game}.
\newblock {\em Phys. Rev. E}, 84(2):021912, 2011.

\bibitem{2002Natur}
B.~{Kerr}, M.~A. {Riley}, M.~W. {Feldman}, and B.~J.~M. {Bohannan}.
\newblock {Local dispersal promotes biodiversity in a real-life game of
  rock-paper-scissors}.
\newblock {\em Nature}, 418:171--174, 2002.

\bibitem{2004Natur}
B.~C. {Kirkup} and M.~A. {Riley}.
\newblock {Antibiotic-mediated antagonism leads to a bacterial game of
  rock-paper-scissors in vivo}.
\newblock {\em Nature}, 428:412--414, 2004.

\bibitem{mcnamara1992inelastic}
S.~McNamara and W.~Young.
\newblock Inelastic collapse and clumping in a one-dimensional granular medium.
\newblock {\em Physics of Fluids A: Fluid Dynamics}, 4(3):496--504, 1992.

\bibitem{Patk2012}
I.~P{\'a}tkov{\'a}, J.~J. {\v{C}}epl, T.~Rieger, A.~Blah{\r{u}}{\v{s}}kov{\'a},
  Z.~Neubauer, and A.~Marko{\v{s}}.
\newblock Developmental plasticity of bacterial colonies and consortia in
  germ-free and gnotobiotic settings.
\newblock {\em BMC Microbiology}, 12(1):178, 2012.

\bibitem{Shi2010BasinsOA}
H.~Shi, W.-X. Wang, R.~Yang, and Y.-C. Lai.
\newblock Basins of attraction for species extinction and coexistence in
  spatial rock-paper-scissors games.
\newblock {\em Physical review. E, Statistical, nonlinear, and soft matter
  physics}, 81 3 Pt 1:030901, 2010.

\bibitem{SL96}
L.~C.~M. Sinervo~B.
\newblock The rock--paper--scissors game and the evolution of alternative male
  strategies.
\newblock {\em Nature}, 380, 1996.

\bibitem{S64}
G.~Stampacchia.
\newblock \'{E}quations elliptiques du second ordre \`a coefficients
  discontinus.
\newblock {\em {S}\'eminaire Jean Leray}, (3):1--77, 1963-1964.

\bibitem{vazquez2007porous}
J.~L. V{\'a}zquez.
\newblock {\em The porous medium equation: mathematical theory}.
\newblock Oxford University Press, 2007.

\bibitem{game1}
J.~W. Weibull.
\newblock {\em Evolutionary game theory}.
\newblock MIT press, 1997.

\end{thebibliography}

\end{document}